\documentclass[12pt,reqno,a4paper]{amsart}

\usepackage{amsmath,amsthm,amssymb}
\usepackage{color}
\usepackage{enumitem}
\usepackage[mathscr]{euscript}
\usepackage[T1]{fontenc}
\usepackage{graphicx}
\usepackage{fullpage}
\usepackage[colorlinks,linkcolor=blue,citecolor=red]{hyperref}
\usepackage{mathtools}
\usepackage{multicol}
\usepackage{comment}
\usepackage{tikz}
\usetikzlibrary{cd}
\usetikzlibrary{decorations.markings}
\usepackage{wrapfig}
\usepackage[colorinlistoftodos,prependcaption]{todonotes}
\usepackage{dirtytalk}

\makeatletter
\DeclareRobustCommand{\em}{%
  \@nomath\em \if b\expandafter\@car\f@series\@nil
  \normalfont \else \bfseries\itshape \fi}
\makeatother

\newtheorem{theorem}[subsection]{Theorem}
\newtheorem{corollary}[subsection]{Corollary}
\newtheorem{lemma}[subsection]{Lemma}
\newtheorem{proposition}[subsection]{Proposition}

\theoremstyle{definition}
\newtheorem{definition}[subsection]{Definition}

\newtheorem{example}[subsection]{Example}
\theoremstyle{remark}
\newtheorem{remark}[subsection]{Remark}

\newtheorem{question}{Question}
\numberwithin{equation}{section}
\numberwithin{figure}{section}

\newcommand{\B}[1]{{\boldsymbol #1}}

\newcommand{\N}{\mathbb{N}}
\newcommand{\Z}{\mathbb{Z}}
\newcommand{\R}{\mathbb{R}}
\newcommand{\C}{\mathbb{C}}

\newcommand{\cA}{\mathcal{A}}

\newcommand{\cC}{\mathcal{C}}

\newcommand{\cH}{\mathcal{H}}
\newcommand{\cJ}{\mathcal{J}}
\newcommand{\cK}{\mathcal{K}}
\newcommand{\cP}{\mathcal{P}}
\newcommand{\cR}{\mathcal{R}}
\newcommand{\cS}{\mathcal{S}}

\newcommand{\F}[1]{{\mathfrak #1}}
\newcommand{\cp}{\C\mathrm{P}}

\newcommand{\om}{\omega}
\newcommand{\tom}{\tilde{\omega}}

\newcommand{\tJ}{\tilde{J}}

\DeclareMathOperator{\Symp}{Symp}
\DeclareMathOperator{\Diff}{Diff}
\DeclareMathOperator{\Emb}{Emb}
\DeclareMathOperator{\Stab}{Stab}

\DeclareMathOperator{\pr}{pr}

\DeclareMathOperator{\Conf}{Conf}

\DeclareMathOperator{\Aut}{Aut}

\DeclareMathOperator{\Sp}{Sp}

\DeclareMathOperator{\PU}{PU}

\DeclareMathOperator{\rel}{rel}
\DeclareMathOperator{\hofib}{hofib}

\DeclareMathOperator{\PB}{PB}

\newcommand{\IEmb}{\F I\mathrm{Emb}}
\newcommand{\into}{\hookrightarrow}

\newcommand{\del}{\partial}

\newcommand{\eoesymbol}{$\Diamond$}

\DeclareRobustCommand{\eoe}{%
  \ifmmode \mathqed
  \else
    \leavevmode\unskip\penalty9999 \hbox{}\nobreak\hfill
    \quad\hbox{\eoesymbol}%
  \fi
}

\begin{document}

\title{Embedding more than 8 symplectic balls in $\cp^2$}

\author[S. Anjos]{S\'ilvia Anjos}
\address{SA: Center for Mathematical Analysis, Geometry and Dynamical Systems \\ Department of Mathematics \\  Instituto Superior T\'ecnico \\  University of Lisbon \\ Av. Rovisco Pais \\ 1049-001 Lisboa \\ Portugal}
\email{sanjos@math.tecnico.ulisboa.pt}

\author[J. K\k{e}dra]{Jarek K\k{e}dra}
\address{JK: Department of Mathematics \\  University of Aberdeen \\ Fraser Noble Building
Aberdeen AB24 3UE \\
Scotland }
\email{kedra@abdn.ac.uk}

\author[M. Pinsonnault]{Martin Pinsonnault}
\address{MP: Department of Mathematics \\  University of Western Ontario \\ Canada}
\email{mpinson@uwo.ca}

\dedicatory{\textit{\small In fond memory of Dietmar Salamon}}

\subjclass[2010]{Primary 53D35; Secondary 57R17,57S05,57T20}
\keywords{Symplectic topology; spaces of symplectic embeddings; configuration spaces; group actions}

\begin{abstract}
We prove that the space of symplectic embeddings of $n\geq 1$ standard balls into the standard complex projective plane $\cp^2$, normalized so that a line has symplectic area $1$, is homotopy equivalent to the configuration space of $n$ points in $\cp^2$, provided that the sum of the ball capacities is strictly less than $1$. Our techniques further suggest that, for $n=9$, there are infinitely many homotopy types of spaces of symplectic ball embeddings, depending on the ball capacities. Moreover, for each $n\geq 5$, we exhibit capacities for which the embedding spaces are not simply connected, in contrast with the case $n \leq 4$.  As an application, we show that, for $n\geq 9$ equal balls of capacity $c<1/n$, the symplectomorphism group of the blow-up has the homotopy type of the stabilizer of $n$ distinct points in $\cp^2$.
\end{abstract}

\maketitle

\section{Introduction}
Let $(M^4, \om)$ be a closed symplectic 4-manifold.  Let $B^4(\pi r^2) =\{x\in \R^4 \ |\ \|x\|\leq r\}$ be the Euclidean ball of capacity $\pi r^2$ equipped with the standard symplectic form $\omega_0$. 
Let $\B\delta = (\delta_1,\ldots,\delta_n)$ and let
$B^4(\boldsymbol{\delta}) = B^4(\delta_1)\sqcup \cdots \sqcup B^4(\delta_n)$
be the disjoint union of $n$ symplectic balls. Let
$\Emb({\B{\delta}},M)$ denote the space of symplectic embeddings
\[
\varphi\colon B^4({\B{\delta}}) \to M
\]
equipped with the $C^{\infty}$-topology. Let $\IEmb(\B{\delta},M)$ be the space of subsets of $M$ that are images of maps belonging to $\Emb(\B{\delta},M)$, which we topologize as the quotient 
\[
\IEmb(\B{\delta},M) := \Emb(\B{\delta},M)\Big/\prod_i \Symp(B^4(\delta_i)),
\]
where the group $\Symp(B^4(\delta_i))$ acts by reparametrizations of the ball $B^4(\delta_i)$. We say $\IEmb(\B{\delta},M)$ is the space of unparametrized symplectic balls of capacities $\delta_1, \ldots, \delta_n$ in~$M$. 

Besides a handful of examples, very little is known about the homotopy type of these embedding spaces. For instance, the following question is widely open, even in the case of a single symplectic ball.

\begin{question}\label{Question}
Is $\IEmb(\B\delta,M)$ homotopy equivalent to the configuration space of $n$ points in $M$, provided the balls are small enough?  
\end{question}

Superficially, the answer seems obvious: small balls should behave like points. This intuition is correct in the more flexible case of volume-preserving embeddings (in any manifold) and, in the more rigid case of configurations of Euclidean balls in bounded regions of a Euclidean space \cite{BBK2014}. This suggests that the answer should be positive in the symplectic category as well. However, the image of an arbitrarily small symplectic ball can be arbitrarily scattered in the ambient manifold, due to the multiple transitivity of symplectic diffeomorphisms, and there is no canonical flow that takes a deformed symplectic ball to a more standard one. This makes the symplectic embedding spaces very hard to investigate. Our first result, however, is the positive answer to the above question in the case of the  complex projective plane $\cp^2$ equipped with the Fubini-Study symplectic form. 

\begin{theorem}\label{T:main}
Let $M=\cp^2$ be the complex projective plane equipped with the Fubini-Study form normalized so that the symplectic area of a line is $1$. Let $n\in \N$ be a positive integer. If $\delta_1 + \cdots + \delta_n < 1$  then
\[
\IEmb(\B\delta,\cp^2) \simeq {\rm Conf}(n,\cp^2).
\]
\end{theorem}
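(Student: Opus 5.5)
The plan is to compare the two sides through the action of $G=\Symp(\cp^2,\om)\simeq \PU(3)$ and the standard fibration
\[
\Symp(M_{\B\delta}) \to \Symp(\cp^2) \to \IEmb(\B\delta,\cp^2),
\]
where $M_{\B\delta}$ is the blow-up of $\cp^2$ at $n$ balls of capacities $\delta_i$. Its symplectic form satisfies $[\om_{\B\delta}]=L-\sum\delta_iE_i$.

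\textbf{Step 1 (transitivity).} By the uniqueness of symplectic blow-ups (McDuff), $\IEmb(\B\delta,\cp^2)$ is connected, so $\Symp(\cp^2)$ acts transitively on it. The fibration then gives
\[
\IEmb(\B\delta,\cp^2)\simeq \Symp(\cp^2)/\Symp(M_{\B\delta},\text{exceptional curves}).
\]
More precisely, the stabilizer of an unparametrized configuration of balls is homotopy equivalent to the group $\Symp_0(M_{\B\delta})$ of symplectomorphisms acting trivially on homology. This uses that every symplectomorphism of the blow-up isotopic to one preserving classes can be made to preserve the exceptional divisors. In the same way, $\Conf(n,\cp^2)\simeq \PU(3)/\Stab(p_1,\dots,p_n)$ up to the appropriate homotopy fibre description. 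Here $\Stab$ is the stabilizer of the points in the group of symplectomorphisms, equivalently diffeomorphisms isotopic to holomorphic maps.

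\textbf{Step 2 (reduce to stabilizers).} Since $\Symp(\cp^2)\simeq\PU(3)$ (Gromov), it suffices to produce a homotopy equivalence
\[
\Symp_0(M_{\B\delta})\simeq \Stab(p_1,\dots,p_n)\subset\Symp(\cp^2)
\]
that is compatible with the maps to $\Symp(\cp^2)$. The key geometric input is the hypothesis $\sum\delta_i<1$. Under it, for a generic tame $J$ on $M_{\B\delta}$, the only $J$-holomorphic spheres of negative self-intersection are in classes of the form $E_i$, $E_i-E_j$ and $L-\sum_{i\in I}E_i$. Their areas are positive exactly when the corresponding configuration of points (collisions, collinearities) is allowed. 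So we stratify the space $\cJ$ of tame almost complex structures by which of these curves exist, and compare this stratification with the stratification of $\Conf(n,\cp^2)$ by incidence type (coincident infinitely near points, collinear points, points on a conic, etc.). The condition $\sum\delta_i<1$ guarantees that every class $L-\sum_{i\in I}E_i$ has positive area. The same holds for conic classes $2L-\sum E_i$, and indeed for all classes $dL-\sum m_iE_i$ that can be represented in the blow-up of genuinely degenerate point configurations. Consequently, the strata of $\cJ$ correspond bijectively to the strata of configurations of points.

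\textbf{Step 3 (the comparison).} Following the Abreu–McDuff / Anjos–Pinsonnault approach, one shows that $\Symp_0(M_{\B\delta})$ is homotopy equivalent to the space $\cJ$ modulo the action. Equivalently, we consider the fibration $\Symp_0\to\Diff_0\to\cJ$-orbit and identify the homotopy type through the configuration of exceptional curves. Blowing down $J$-holomorphic exceptional spheres gives a map from $\cJ(M_{\B\delta})$ to pairs consisting of an almost complex structure on $\cp^2$ and $n$ points. The space of such pairs is contractible-fibred over $\Conf(n,\cp^2)$. This yields the required equivalence.

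The main obstacle is Step 3, namely showing that the blow-down map is a homotopy equivalence for arbitrary $n$, including $n\ge9$. For $n\ge9$ the stratification of $\cJ$ is infinite, and $\cJ$ need not be dominated by a finite-dimensional family. I would first try to prove that, when $\sum\delta_i<1$, every class with $J$-representable negative curves is of the form listed above. This should follow from an area argument: an embedded sphere in class $dL-\sum m_iE_i$ with negative square has area $d-\sum m_i\delta_i>d(1-\sum\delta_i)\cdot$const. I would then use inflation along these curves to show that the space of forms with given class is independent of $\B\delta$ in the region $\sum\delta_i<1$. This reduces the comparison to the limit $\delta\to0$, where the balls behave like points.
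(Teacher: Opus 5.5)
Your main line (Steps~1--3) does not go through.

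\textbf{Step 1.} Your first description of the stabilizer, symplectomorphisms of $M_{\B\delta}$ preserving the exceptional curves, is correct: this is $\Symp_h(\B\delta,\Sigma)$, cf.\ \cite{LP2004}. The ``more precise'' replacement by $\Symp_0(M_{\B\delta})$ is not. The two groups are related by the fibration $\Symp_h(\B\delta,\Sigma)\to\Symp_h(M_{\B\delta})\to\cC([\Sigma])$. Its base is contractible only if every $E_i$ is $J$-indecomposable for all tame $J$. A $\pi_0$-statement about isotoping symplectomorphisms to divisor-preserving ones says nothing about higher homotopy. Indecomposability fails as soon as $\delta_i>\delta_j$: the $(-2)$-class $E_i-E_j$ then has positive area, so $E_i$ breaks as $E_j\cup(E_i-E_j)$ on a codimension-$2$ stratum of $\cJ$, whose complement is not contractible. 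Hence the equivalence $\Symp_0(M_{\B\delta})\simeq\Stab(p_1,\dots,p_n)$ that Step~2 aims for is false in general. For example, for $n=2$ and $\delta_1\neq\delta_2$, $\Symp_h(\cp^2\#2\overline{\cp}^2)$ is not homotopy equivalent to $T^2$, while the stabilizer of two points is. This is why the paper identifies the stabilizer with the full group only for equal capacities (Corollary~\ref{cor:equivalence Stab=Symp}).

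\textbf{Step 2.} The stratification dictionary is also off. Strata on which $E_i-E_j$ is represented correspond to infinitely near points, which do not lie in $\Conf(n,\cp^2)$. Moreover, the sign of $\om_{\B\delta}(E_i-E_j)$ is unrelated to $\sum\delta_i<1$.

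\textbf{Step 3.} The blow-down map is not defined. A non-integrable $J$ does not blow down to an almost complex structure on $\cp^2$, and there are no $n$ distinct points when some $E_i$ is reducible. For $J$ standard near $\Sigma$, what you recover is a configuration of balls, not of points; passing from balls to points is the actual content of the theorem.

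\textbf{The fallback.} Your closing plan (inflate to get independence of $\B\delta$ on $\Delta_0$, then let $\B\delta\to 0$) is the paper's strategy, but its key ingredients are missing.

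First, what is independent of $\B\delta\in\Delta_0$ is the space $\cA(\B\delta,\Sigma)$ of almost complex structures tamed by some form in $[\om_{\B\delta}]$ and making $\Sigma$ holomorphic; spaces of forms live in different classes. This is turned into constancy of the homotopy type of $\IEmb(\B\delta,\cp^2)$ by \cite[Theorem~1.3]{Anjos-al-Stability}.

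Second, classifying negative curves does not by itself give the inclusions $\cA(\B\delta,\Sigma)\subseteq\cA(\B{\delta'},\Sigma)$. The principle ``positivity on negative curves suffices'' is Zhang's criterion (Theorem~\ref{thm:Nakai-Moishezon duality}), which is available only for $n\le 9$, whereas the theorem holds for all $n$. The missing idea is Lemma~\ref{lemma:existence fiber classes}: for every $J\in\cA(\B\delta,\Sigma)$, not only generic ones, each fibre class $F_i=L-E_i$ is represented by embedded $J$-spheres. The proof uses Taubes' count with $k(F_i)=1$, Gromov compactness, and rigidity of the only possible degenerations $(L-E_i-\sum_\ell E_\ell)\cup\bigcup_\ell E_\ell$. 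Inflating along the $E_i$ shrinks the capacities toward $\B 0$, and inflating along the $F_i$ pushes toward the vertices $e_i$ of the simplex $\Delta_0$. So every point of $\Delta_0$ is reached from every other (Proposition~\ref{prop:Stability holds in small chamber}). Your own area bound $d-\sum m_i\delta_i\geq d(1-\sum\delta_i)$ already presupposes $0\le m_i\le d$, i.e.\ $A\cdot F_i\geq 0$, which is exactly what the $F_i$-curves provide.

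Third, ``balls behave like points as $\B\delta\to 0$'' needs an argument. The paper uses:
\begin{itemize}
\item the $1$-jet map $\Psi_{\B\delta}\colon\Emb(\B\delta,\cp^2)\to{\rm SpConf}(n,\cp^2)$;
\item the weak equivalence of the direct limit under restrictions with ${\rm SpConf}(n,\cp^2)$ \cite[Lemma~2.2]{AKP2024}, together with constancy of this directed system on $\Delta_0$;
\item a comparison of fibrations using $\Symp(B^4(\B\delta),0)\simeq{\rm U}(2)^n\simeq\Sp(4;\R)^n$ to pass to centers in $\Conf(n,\cp^2)$;
\item the equivalence $\IEmb^*(\B\delta,\cp^2)\simeq\IEmb(\B\delta,\cp^2)$.
\end{itemize}
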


An immediate corollary to Theorem~\ref{T:main} is that, for any $n\geq 3$, the symnplectomorphism group of the $n$-fold blow up of $\cp^2$ at equal balls of capacity $c<1/n$ is homotopy equivalent to the stabilizer of $n$ distinct points in $\cp^2$, see~Proposition~\ref{prop:homotopy type Symp}.\\

A closely related question about the topology of embedding spaces is the following.

\begin{question} \label{Question2}
Given a symplectic manifold $(M,\omega)$, is it possible to find an infinite sequence of capacities $\B\delta_k$ such that the homotopy types of the embedding spaces $\Emb(\B{\delta_k},M)$ are pairwise distinct? 
\end{question}

In the case of $\cp^2$ it is proved in \cite{Anjos-al-Stability} that, for $n < 9$,  $\Emb(\B{\delta},\cp^2)$  exhibits only finitely many distinct homotopy types. The second main result of this paper is a step towards a positive answer to the above question. 

The ordered set ${\B\delta} =(\delta_1,\ldots,\delta_n)\in \R^n$ of capacities is called {\it admissible} if there is a symplectic embedding $B^4(\B\delta)\to \cp^2$. The set of all such $n$-tuples forms an open convex subset $\cC_n\subseteq \R^n$, which is subdivided into convex regions called {\it chambers}. Given sets of admissible capacities $\B\delta = (\delta_1,\ldots,\delta_n)$ and $\B{\delta'}=(\delta'_1,\ldots,\delta'_n)$, we say that $\B\delta\leq \B{\delta'}$ if $\delta_i\leq \delta'_i$ for all $i=1,\ldots, n$.

\begin{theorem}\label{T:sequence}
Let $n=9$. There exist a sequence $\{C_k\}_{k\in \N}$ of pairwise adjacent chambers in $\cC_n$ and a corresponding sequence $\{\B\delta_k\}_{k\in \N}$ of capacities $\B\delta_k\in C_k$  such that 
\begin{enumerate}
\item $C_0 =\Delta_0=\{(\delta_1,\ldots,\delta_n)\in \R^n\  |\ \delta_1+\cdots +\delta_n<1\}$;\item $\lim_{k\to \infty} \B\delta_k = (1/3,1/3,\ldots,1/3)$; and
\item for any pair of consecutive chambers $C_k$ and $C_{k+1}$, we can find capacities $\B\delta \in C_k$ and $\B{\delta'} \in C_{k+1}$ satisfying $\B\delta < \B{\delta'}$, and for any such capacities, the restriction map 
\[
\Emb(\B{\delta'},\cp^2)\to \Emb(\B{\delta},\cp^2)
\]
is not a homotopy equivalence.
\end{enumerate}
\end{theorem}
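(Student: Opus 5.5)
We plan to prove Theorem~\ref{T:sequence} by translating ball embeddings into symplectic forms on the blow-up $M_9 = \cp^2\#9\overline{\cp}^2$ and their exceptional curves. The chambers of $\cC_9$ are the regions cut out by the walls
\[
\omega_{\B\delta}(E)=0,\qquad \omega_{\B\delta}=L-\sum\delta_iE_i,
\]
where $E$ ranges over classes of exceptional spheres, i.e.\ classes with $E^2=E\cdot K=-1$. For $n=9$ these classes are infinite in number. They are the orbit of $E_9$ under the affine Weyl group $\widetilde{E}_8$, generated by the Cremona reflection $L-E_1-E_2-E_3$ and the permutations.

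We use the standard fibration relating embeddings to the blow-up. For any $\B\delta$ there is a homotopy fibration
\[
\Symp(M_{\B\delta}) \to \Symp(\cp^2)\simeq \PU(3) \to \IEmb(\B\delta,\cp^2),
\]
up to the relevant identifications. There is also the corresponding fibration for parametrized embeddings, with fibre $\Symp(M_{\B\delta},\text{exceptional divisors})$ and the $U(2)^9$ frame data. We treat this as known, as in \cite{Anjos-al-Stability}. Comparing $\Emb(\B{\delta'})\to\Emb(\B\delta)$ thus reduces to comparing $\Symp(M_{\B\delta'})$ with $\Symp(M_{\B\delta})$.

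When crossing a wall, a new class $A$ (for instance $A=E_i-E_j$ or $A=L-E_i-E_j-E_k$, or a Cremona image such as $2L-E_1-\dots-E_6$ type $-2$ classes, and higher) acquires or loses positive area. Equivalently, the stratification of the space $\cJ_{\omega}$ of tame almost complex structures gains a new stratum of structures for which a $-2$ sphere (or some negative curve) in class $A$ is $J$-holomorphic. Following Abreu--McDuff and Anjos--Pinsonnault, such a codimension-$2$ stratum changes $H^*(\cJ_\omega)$ or $\pi_*$ via a nontrivial class in $\pi_1$ or in low homology of $\Symp$. The Seidel/Kronheimer parametric Gromov--Witten argument shows that $\Symp(M_{\B\delta})\to\Symp(M_{\B{\delta'}})$ is not a homotopy equivalence when $\B\delta<\B{\delta'}$ lie in different chambers. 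The standard method is to exhibit a symplectic Dehn twist along a Lagrangian $-2$ sphere in class $A$ that exists on one side and not the other. This twist is isotopic to the identity when $\omega(A)>0$ and $A$ is not Lagrangian-representable, but contributes a loop through the family of forms. An alternative is to compute the rank of a homotopy group, e.g.\ $\pi_1$ or $\pi_0$.

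Next we construct the chambers explicitly. Start from $\Delta_0$, where Theorem~\ref{T:main} gives $\IEmb\simeq\Conf(9,\cp^2)$. Choose the ray from $\B\delta$ near $0$ toward $(1/3,\dots,1/3)$, perturbed to be generic. Along this ray the anticanonical-type area $\omega(-K)=3-\sum\delta_i$ tends to $0$. Infinitely many exceptional classes $E_k$, of the shape $dL-\sum m_iE_i$ with $d\to\infty$, have $\omega(E_k)\to 0^+$, while $-2$ classes $A_k=E_k-E_{k'}$ change sign successively. Each wall crossing gives an adjacent chamber $C_{k+1}$, and monotonicity $\B\delta<\B{\delta'}$ is arranged by slightly bending the ray so it increases coordinatewise, which is possible since the walls are transverse to positive directions. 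Convergence to $(1/3,\dots,1/3)$ follows because the walls accumulate only at the point where $\omega=-K/3$-proportional, on the boundary of $\cC_9$ ($\omega\cdot K=0$).

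The main obstacle is item (3): proving non-equivalence for every wall. We would first try to show that at each wall a new $J$-holomorphic $-2$ (or more negative) sphere class $A_k$ appears. We then compute, via the fibration, a change in $\pi_0$ or $\pi_1$ of $\Symp$, or in rational homology, detected by the corresponding stratum's Gysin-type contribution. We rely on the fact that the relevant Gromov--Witten invariants of $A_k$ are nonzero. Dehn twists in such classes are nontrivial in $\pi_0\Symp$ on one side and trivial on the other, as in Seidel's results for $-2$ spheres.
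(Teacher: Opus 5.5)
\textbf{There are genuine gaps.} They affect both item~(3) and items~(1)--(2).

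\textbf{Item (3).} The mechanism you propose for non-equivalence is not available.
\begin{itemize}
\item For generic capacities in an open chamber, no $(-2)$-class has zero $\om_{\B\delta}$-area. So there is no Lagrangian sphere in class $A$ on \emph{either} side of the wall $H_A$, and hence no Dehn twist to compare. In fact, for such capacities $\Symp_h$ of the blow-up is connected: the form is Cremona-equivalent to a type-$\mathbb{A}$ form (Theorem~\ref{thm:LLW}). So nothing can be detected in $\pi_0$.
\item You cannot rely on nonzero Gromov--Witten invariants either. A root has $c_1(A)=0$, hence negative index and vanishing invariants; $A$ is $J$-holomorphic only on a codimension-$2$ set of $J$'s.
\item You compare the wrong objects. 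The fibre of $\Symp_h(\cp^2)\to\Emb(\B\delta,\cp^2)$ is $\Stab_h(\phi)\simeq\Symp_h(\B\delta,\rel\Sigma)$, not $\Symp(M_{\B\delta})$. There is also no a priori map between these groups for $\B\delta'$ and $\B\delta$ that is compatible with restriction. Constructing one is the point of the Appendix, via the radial diffeomorphism $\Psi$ and inflation along $\Sigma$ that keeps $J$ standard near $\Sigma$.
\end{itemize}
The codimension-$2$ stratum you mention is the right object, but it lives in the pair $(\cA(\B\delta,\rel\Sigma),\cA(\B\delta',\rel\Sigma))$, not in the contractible space $\cJ_\om$. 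One has $\cA(\B\delta,\rel\Sigma)=\cA(\B\delta',\rel\Sigma)\sqcup\cA_2$. By Eells' Alexander--Pontryagin duality, a small $2$-disc transverse to $\cA_2$ is nontrivial in the relative $\pi_2$; this intersection number is the ``parametric count'' that should replace your GW invariant. Diagram chases through the fibrations $\Symp_h(\cdot,\rel\Sigma)\to\Diff_h(\cdot,\rel\Sigma)\to\cA(\cdot,\rel\Sigma)$ and $\Stab_h\to\Symp_h(\cp^2)\to\Emb$ then give $\pi_2(\Emb(\B\delta,\cp^2),\Emb(\B\delta',\cp^2))\neq 0$.

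The clause ``for any such capacities'' needs one more observation, which your proposal does not address. The other walls between $\B\delta$ and $\B\delta'$ come from the finitely many classes with $A^2\le -3$. These have $c_1(A)<0$, so their strata have codimension $\ge 4$ and cannot affect this relative $\pi_2$.

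\textbf{Items (1)--(2).} These do not follow from your construction.
\begin{itemize}
\item The chamber walls are not $\om_{\B\delta}(E)=0$ for exceptional $E$; those cut out the boundary of $\cC_9$. The walls are the hyperplanes of the positive roots $\pm r_{ijk}+mD_9$ and $\pm r_{ij}+mD_9$, plus finitely many classes with $c_1<0$.
\item Every root satisfies $A\cdot D_9=0$, and the monotone class is $\tfrac13 D_9$. So every $(-2)$-wall passes through $(1/3,\ldots,1/3)$.
\item Along the straight segment from $\B\delta\in\Delta_0$ to that point, $\om_t(A)=(1-t)\om_{\B\delta}(A)$ never changes sign. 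So \emph{no} $(-2)$-wall is crossed.
\item A segment ending at a nearby point of $\cC_9$ crosses only finitely many walls (Proposition~\ref{prop:accumulation points}).
\item At the monotone point every exceptional class has area $1/3$, not tending to $0$.
\end{itemize}
Crossing infinitely many walls forces $m\to\infty$, i.e.\ $|\om(r)/\om(D_9)|\to\infty$. Each crossing must also go from $\om(A)>0$ to $\om(A)<0$. ``Perturbed to be generic'' supplies neither.

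The paper obtains both by identifying reflections across positive walls with moves in Mozes' game on $\widetilde{E_8}$. Eriksson's invariant (pairing with $(3,2,4,6,5,4,3,2,1)$, equal to $3>0$) then guarantees an infinite, non-periodic sequence of positive moves. The unnormalized classes increase, and the normalized capacities converge to $(1/3,\ldots,1/3)$.

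Finally, comparability across a wall is not a matter of walls being ``transverse to positive directions''. It uses that positive roots have nonnegative $E_i$-coefficients. Hence the wall normal lies in the closed positive orthant with the orientation needed for a restriction from $C_{k+1}$ to $C_k$ (Lemma~\ref{lemma:solstice lemma}).
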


The accumulation of chambers near the monotone point, as stated in item (2) above, is a special case of a more general phenomenon that holds for any rational $4$-manifold: all accumulation points  lie on the \emph{round boundary} of the symplectic cone, namely on the light cone $\sum_i\delta_i^2=1$. See Proposition~\ref{prop:accumulation points} for a precise statement.

Notice that item~$(3)$ of the above theorem does not imply that the spaces $\Emb(\B{\delta'},\cp^2)$ and $\Emb(\B\delta,\cp^2)$ are not homotopy equivalent. Nevertheless, we conjecture that this is indeed the case. As evidence, we combine our results with the recent work of Li, Li, and Wu~\cite{LLW-Torelli} to construct a path through finitely many stability chambers, starting in a chamber where the embedding space is simply connected and ending in one where it is not; see Examples~\ref{ex:non simply connected 1} and~\ref{ex:non simply connected 2}.

\subsection*{Outline of the proof of Theorem~\ref{T:main}.} 
Let $\Symp(B^4(\B{\delta}),0)$ be the group of symplectic diffeomorphisms
of the disjoint union of balls fixing the center of the balls. An element of the quotient 
\[
\IEmb^*(\B\delta,\cp^2) = \Emb(\B\delta,\cp^2)/\Symp(B^4(\B{\delta},0))
\]
is an ordered configuration of symplectic balls {\it with centers}. Consequently, there is a natural map
\[
\IEmb^*(\B\delta,\cp^2) \to {\rm Conf}(n,\cp^2)
\]
given by $[\varphi] \mapsto (\varphi_1(0),\ldots,\varphi_n(0))$,
where $\varphi_i\colon B(\delta_i)\to \cp^2$ is the symplectic embedding
of the $i$-th ball. On the other hand, the natural projection $\IEmb^*(\B\delta,\cp^2)\to \F \IEmb(\B{\delta},\cp^2)$ is a homotopy equivalence, according to \cite[Section 2.2]{AKP2024}. Thus the homotopy equivalence in the statement of Theorem \ref{T:main} is the composition
\[
\F \IEmb(\B{\delta},\cp^2)\to \IEmb^*(\B\delta,\cp^2)\to {\rm Conf}(n,\cp^2),
\]
where the first map is the homotopy inverse of the natural projection.

By the symplectic blow-up construction, every set of admissible capacities $\B\delta$ defines a cohomology class $[\omega_{\B{\delta}}]$ in the symplectic cone of the $n$-fold complex blow-up $\widetilde{M}_n = \cp^2\#n\overline{\cp}^2$ (note that the actual blow-up form $\omega_{\B{\delta}}$ is only defined up to isotopy). The Poincar\'e dual of $[\omega_{\B{\delta}}]$ is given by
\[
{\rm PD}[\omega_{\B{\delta}}] = L - \sum_{i=1}^n \delta_i E_i \in H_2(\widetilde{M}_n;\R),
\]
where $L$ is the class of a line and $E_i$ are the classes of exceptional divisors. Thus the set 
\[
\cC_n =\left\{\B{\delta}=(\delta_1,\ldots,\delta_n) \in \R^n ~|~  \B\delta \ \mbox{is admissible} \right\}
\] 
of admissible capacities can also be considered as a subset of $H_2(\widetilde{M}_n;\R)$ via the inclusion $\B\delta \mapsto {\rm PD}[\omega_{\B{\delta}}]$. Notice that this inclusion is a restriction of an affine map $\R^n\to H_2(\widetilde{M}_n;\R)$.

Let $\Delta_0=\left\{(\delta_1,\ldots, \delta_n)\in \R^n\ |\ \delta_1+\cdots+\delta_n < 1\right\}$. The main ingredient in the proof of Theorem \ref{T:main} is to show that $\Delta_0$ is contained in a stability component, where by a {\it stability component} we mean a maximal connected subset of $\cC_n$ on which the homotopy type of $\IEmb(\B\delta,\cp^2)$ is constant. This step uses the framework of~\cite[Section 2]{LP2004}, \cite[Sections 2.3 and 2.4]{Anjos-al-Stability}, and~\cite[Section 5.3]{AKP2024}, which reduces the question to showing that certain spaces of almost complex structures, depending on the capacities $\B\delta\in\cC_n$, are in fact constant over $\Delta_0$.

This reformulation is based on two natural fibrations with homotopy equivalent fibers. The first one is induced by the natural action of symplectomorphisms on embeddings, namely,
\begin{equation}
\begin{tikzcd}
\Stab(\phi(B^4(\B\delta))) \ar[r] &\Symp(\cp^2)\ar[r] &\IEmb(\B\delta,\cp^2),
\end{tikzcd}
\label{Eq:diagram-Embs-part1}
\end{equation}
where the left-hand side entry is the isotropy subgroup of the ball  
$\phi(B^4(\B\delta))\subset \cp^2$. To describe the second one, let $\Diff_h(\widetilde{M}_n,\Sigma)$ denote the group of diffeomorphisms of the complex blow-up $\widetilde{M}_n$ which act trivially on homology and which leave the exceptional divisors invariant. It acts on a set $\cA(\B\delta,\Sigma)$ of almost complex structures on $\widetilde{M}_n$ for which the exceptional divisors are embedded $J$-holomorphic spheres. The homotopy fiber of the evaluation map is equivalent to the subgroup $\Symp_h(\B\delta, \Sigma)$ consisting of diffeomorphisms of $\widetilde{M}_n$ preserving $\omega_{\B\delta}$ and sending $\Sigma$ to itself. This yields the second (homotopy) fibration:
\begin{equation}
\begin{tikzcd}
\Symp_h(\B\delta, \Sigma)\ar[r] & \Diff_h(\widetilde{M}_n,\Sigma)\ar[r]
& \cA(\B\delta,\Sigma).
\end{tikzcd}
\label{Eq:diagram-ACS-part1}
\end{equation}
By~\cite[Section 2]{LP2004}, the stabilizer group $\Stab(\phi(B^4(\B\delta)))$ is homotopy equivalent to the homotopy fiber $\Symp_h(\B\delta,\Sigma)$, and by~\cite[Theorem 1.3]{Anjos-al-Stability} the homotopy type of the embedding space $\IEmb(\B\delta,\cp^2)$ changes precisely when the homotopy type of the space $\cA(\B\delta, \Sigma)$ changes. 

The final step of the proof is to show that all spaces $\F \IEmb(\B{\delta},\cp^2)$ for $\B\delta\in \Delta_0$ are homotopy equivalent to ${\rm Conf}(n,\cp^2)$. This conclusion essentially follows from~\cite[Lemma 2.2]{Anjos-al-Stability}.

\subsection*{Outline of the proof of Theorem \ref{T:sequence}}
In the first part of the proof we show in Section~\ref{S:1/3} that there is an infinite family of hyperplanes, called walls, associated with certain homology classes of negative self-intersection, intersecting the set $\cC_9$ of admissible capacities, and defining a partition of $\cC_9$ into convex chambers in which stability holds. Then we show that there exist sequences of chambers $\{C_k\}_{k\in \N}$ and of capacities $\B\delta_k\in C_k$, starting with $C_0=\Delta_0$, and such that $\B\delta_k$ and $\B\delta_{k+1}$ are separated by exactly one wall associated with a $(-2)$-curve. Moreover, the sequence $\{\B\delta_k\}$ converges to the point $(1/3,1/3,\ldots,1/3)$.

Let $\B\delta'>\B\delta$ and let 
\[
\Emb(\B\delta',\cp^2)\to \Emb(\B\delta,\cp^2)
\]
be the map defined by restriction of an embedding of $B^4(\B\delta')$ to smaller balls $B^4(\B\delta)$. By taking the mapping cylinder, we can consider the pair $(\Emb(\B\delta,\cp^2),\Emb(\B\delta',\cp^2))$. To show that the restriction map is not a homotopy equivalence, we prove that a homotopy group of the pair is nontrivial. We adapt the framework used in the proof of Theorem~\ref{T:main} to make it better suited to the study of restriction maps. The main idea is to track more closely the almost complex structures and the diffeomorphisms in the blow-up construction. In this setting, the fibration~\eqref{Eq:diagram-Embs-part1} is replaced by
\begin{equation}
\begin{tikzcd}
{\rm Stab}(\phi(\B\delta)) \ar[r] &\Symp(\cp^2)\ar[r] &\Emb(\B\delta,\cp^2),
\end{tikzcd}
\label{Eq:diagram-Embs}
\end{equation}
where the space of images is replaced by actual embeddings, and where the fiber is the isotropy subgroup of a given embedding $\phi\colon B^4(\B\delta)\to \cp^2$. Let $\Diff_h(\widetilde{M}_n,\rel \Sigma)$ denote the group of diffeomorphisms of the complex blow-up $\widetilde{M}_n$, which act trivially on homology and are supported away from the exceptional divisor. It acts on the set $\cA(\B\delta,\rel \Sigma)$ of almost complex structures on $\widetilde{M}_n$ which are standard in a neighborhood of the exceptional divisor; see Section~\ref{S:preliminaries} for a precise definition. As before, these spaces fit into a homotopy fibration analogous to~\eqref{Eq:diagram-ACS-part1}.
\begin{equation}
\begin{tikzcd}
\Symp_h(\B\delta, \rel \Sigma)\ar[r] & \Diff_h(\widetilde{M}_n,\rel \Sigma)\ar[r]
& \cA(\B\delta,\rel \Sigma).
\end{tikzcd}
\label{Eq:diagram-ACS}
\end{equation}
This allows one to prove that the non-triviality of  $\pi_k(\Emb(\B\delta,\cp^2),\Emb(\B\delta',\cp^2))$
is equivalent to the non-triviality of
$\pi_k(\cA(\B\delta,\rel \Sigma),\cA(\B\delta',\rel \Sigma))$. This part of the proof is not immediate: as the symplectic blow-up is not functorial, it requires one to verify compatibility conditions in a composition of a number of other homotopy equivalences. The use of the space $\cA(\B\delta,\rel \Sigma)$ is crucial for that part of the proof. The details are presented in the Appendix. 

In order to conclude the proof of Theorem~\ref{T:sequence}, let $\B {\delta'} \in C_{k+1}$ and $ \B\delta \in C_k$ be two capacities satisfying $\B {\delta'} > \B {\delta}$  which do not belong to any wall and such that there is exactly one wall associated with a $(-2)$-curve between them. Then
\[
\cA({\B\delta},\rel \Sigma) = \cA({\B\delta'},\rel \Sigma) \sqcup \cA_2,
\]
where $\cA_2 \subseteq \cA(\B\delta,\rel \Sigma)$ is a submanifold of codimension $2$.  We show in Lemma \ref{L:eells} that this implies that the relative group $\pi_2(\cA(\B\delta,\rel \Sigma),\cA(\B\delta',\rel \Sigma))$ is non-trivial, which, according to the discussion above, shows that
$\pi_2\big(\Emb(\B\delta,\cp^2),\Emb(\B\delta',\cp^2)\big)$ is non-trivial. It follows that the restriction map $\Emb(\B\delta',\cp^2)\to \Emb(\B\delta,\cp^2)$ cannot be a homotopy equivalence. 
\subsection*{Organization of the paper}
Section~\ref{S:preliminaries} reviews the background on stability of embedding spaces in rational 4-manifolds. Sections~\ref{S:main} and \ref{S:wall-crossing} are devoted to the proof of the two main theorems: Section~\ref{S:main} establishes Theorem~\ref{T:main}, while Section 4 develops the wall-crossing machinery behind Theorem~\ref{T:sequence}.
Section \ref{S:1/3} classifies the homology classes defining walls for $n = 9$ and  reformulates wall-crossing as Mozes' game of numbers in order to complete the proof of Theorem \ref{T:sequence}. It ends with a general result on the accumulation of walls in any rational 4-manifold. Section \ref{S:applications} derives applications to symplectomorphism groups of blow-ups of $\mathbb{CP}^2$, and Section \ref{S:example} combines the results with recent work of Li, Li and Wu on on the symplectic Torelli group of symplectic rational surfaces. The Appendix supplies the technical details relating symplectic blow-ups and embeddings that justify the compatibility statements used in Section~\ref{S:wall-crossing}.

\subsection*{Acknowledgements.}
The authors would like to thank Pedro Boavida de Brito for helpful conversations. MP is supported by NSERC Discovery Grant RGPIN-2020-06428. The three authors are funded by Fundação para a Ciência e Tecnologia (FCT), Portugal, through grant No. UID/04459/2025 and project 2023.13969.PEX, and by the Recovery and Resilience Plan (PRR) through grant No. UID/PRR/04459/2025.

\section{Background results  on stability of embedding spaces}\label{S:preliminaries} 
We recall the basic approach used in the study of embedding spaces on symplectic rational $4$-manifolds and, in particular, the central notion of stability. For a more complete exposition, the reader can consult~\cite[Sections 2.3 and 2.4]{AKP2024}. Although the proofs of Theorems~\ref{T:main} and~\ref{T:sequence} use two closely related frameworks -- namely, the spaces $\cA(\B\delta, \Sigma)$ for the study of images $\IEmb(\B\delta,\cp^2)$ and their relative counterparts $\cA(\B\delta, \rel \Sigma)$ for embeddings $\Emb(\B\delta,\cp^2)$ -- we state the results only for the latter in order to streamline the exposition. Similar results hold for the former case.

\begin{definition}
Let $(M,\om)$ be a $4$-manifold and let $\B\delta$, $\B{\delta'}$ be two sets of capacities in $\cC_n$. We say that {\it $\B\delta$ and $\B{\delta'}$ are in the same  stability component} if there exists a continuous family of capacities $\B\delta_t\subset\cC_n$ interpolating $\B\delta$ and $\B{\delta'}$ for which the homotopy type of $\Emb(\B\delta_t,M)$ is constant. 
\end{definition}

For a rational $4$-manifold $(M,\om)$, given capacities $\B\delta= (\delta_1, \ldots, \delta_n) \in \cC_n$, the connectedness of $\Emb(\B\delta,M)$~\cite[Corollary 1.5]{McDuff-DeformationsToIsotopy} implies that the natural action on $\Emb(\delta,M)$ of the group $\Symp_h(M, \omega)$ of symplectomorphisms acting trivially on homology is transitive.  
Similarly to the fibration \eqref{Eq:diagram-Embs}, evaluating this action at an embedding $\phi$ defines a fibration
\begin{equation}\label{second fibration parametrized}
 \Stab_h(\phi) \to \Symp_h(M,\omega) \to \Emb(\B\delta,M),
\end{equation}
where the fiber is the isotropy subgroup of an embedding $\phi: B^4(\B\delta) \to M$.  

As explained in the outline of the proof of Theorem \ref{T:sequence}, this fibration, together with the homotopy fibration \eqref{Eq:diagram-ACS}, can be used to investigate the homotopy type of $\Emb(\B\delta, M)$.  More precisely, as the capacities $\B\delta$ vary, the homotopy types of $\Symp_h(\B\delta, \rel \Sigma)$ and $\Emb(\B\delta,M)$ change precisely when the homotopy type of the space $\cA(\B\delta,\rel \Sigma)$ changes.  
Here, $\cA(\B\delta,\rel \Sigma)$ denotes the space of almost complex structures on the blow-up $\widetilde{M}_n$ that are tamed by some symplectic form in $\Omega(\B\delta,\rel \Sigma)$, and that coincide with the standard structure $\widetilde{J}_0$ on a neighborhood of $\Sigma$. The space $\Omega(\B\delta,\rel \Sigma)$ consists of symplectic forms on $\widetilde{M}_n$ in the class $[\omega_{\B\delta}]$ which agree with $\omega_{\B\delta}$ on a neighborhood of $\Sigma$. For more details about what it means to be standard near $\Sigma$, see the Appendix.

As mentioned in the introduction, the use of the space $\cA(\B\delta,\rel \Sigma)$ is crucial. The main idea is that, using tame-to-tame inflation (see~\cite{McDuff-J-inflation, Buse-Neg-inflation, CPP-JTtameInflation}), one can show that, under suitable conditions on the capacities, we have
\[
\cA(\B\delta,\rel \Sigma)= \cA(\B\delta',\rel \Sigma),
\]
which in turn implies that the two sets of capacities $\B\delta$ and $\B\delta'$ belong to the same stability component.

For rational $4$-manifolds with Euler number $\chi \leq 12$, the process of finding suitable curves for $J$-inflation was systematically studied by W. Zhang~\cite{Zhang-CurveCone} in his proof of the almost K{\"a}hler Nakai--Moishezon duality theorem (also based on the inflation lemma). In order to state the relevant results, recall that, given an almost complex structure $J$ on $M$, the curve cone $A_J(M)\subset H_2(M,\R)$ is defined as
\[
A_J(M) =\left\{\sum_i a_i[C_i]~|~a_i>0  \right\}
\]
where the $C_i$ are irreducible $J$-holomorphic curves. We write $\overline{A}_J(M)$ for its closure, and $\overline{A}^{\vee,>0}_J(M)$ for the positive dual of the closure. We also define the tame and compatible K{\"a}hler cones of $J$ as
\begin{align*}
\cK_J^t &= \left\{ \alpha\in H^2(M,\R)~|~J \text{~is tamed by some symplectic form in class~} \alpha \right\}\\
\cK_J^c &= \left\{ \alpha\in H^2(M,\R)~|~J \text{~is compatible with some symplectic form in class~} \alpha \right\}.
\end{align*}
We can now state a version of Zhang's Nakai--Moishezon duality theorem that is suitable for our purposes. 
\begin{theorem}[\cite{Zhang-CurveCone}, Theorems~1.1 and~1.6]\label{thm:Nakai-Moishezon duality}
If $J$ is tamed by some symplectic form on $M = \cp^2\#k\overline{\cp^2}$ with $2\leq k\leq 9$, then the boundary of the curve cone is generated by curves of self-intersection $\leq -1$. Moreover, 
we have equalities
\[\cK_J^t = \cK_J^c = \overline{A}^{\vee,>0}_J(M)\]
and any two classes in the tame K{\"a}hler cone $\cK_J^t$ can be joined by a path obtained by $J$-tame inflation along $J$-holomorphic curves.
\end{theorem}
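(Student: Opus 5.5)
The statement is quoted from \cite{Zhang-CurveCone}, so the plan is to reconstruct Zhang's argument in three steps. First, the easy inclusions. Second, the structure of the curve cone for $k\le 9$. Third, the reverse inclusion by inflation, which also produces the connecting paths.

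\textbf{Easy inclusions.} Clearly $\cK_J^c\subseteq \cK_J^t$. If $\omega$ tames $J$, then $\omega$ is positive on every irreducible $J$-holomorphic curve $C$. Hence $[\omega]$ pairs positively with $A_J(M)$, and nonnegatively with its closure. To get strict positivity on $\overline{A}_J(M)\setminus\{0\}$, I would use the structure theorem below: the closed cone is generated by finitely many negative curves together with limits of positive-square classes. On positive-square classes, positivity follows from the light cone lemma, since $[\omega]^2>0$ and $[\omega]$ lies in the same component of the positive cone.

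\textbf{Structure of the curve cone.} Because $k\le 9$, we have $K^2=9-k\ge 0$. By Seiberg--Witten theory and Taubes' results for rational surfaces, $-K$ is $J$-effective for every tamed $J$, and $L$ is represented by $J$-spheres (Gromov--McDuff). Let $C$ be an irreducible curve with $C^2\ge 0$. Then $C$ lies in the closure of the forward positive cone $\{e : e^2\ge0,\ e\cdot[\omega]>0\}$. Let $C$ be an irreducible curve with $C^2<0$. Adjunction gives $C^2+K\cdot C=2g-2\ge -2$. Using $-K$ effective, this bounds $K\cdot C\le 0$ except when $C$ is a component of $-K$ itself, so $C$ is a $(-1)$-sphere, a $(-2)$-sphere, or a component of the anticanonical divisor.

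With these facts in hand, I would show that every extremal ray of $\overline{A}_J(M)$ is either spanned by a negative curve or is isotropic. The argument is a Hodge-index style computation with respect to $-K$. Isotropic extremal rays with $k\le 9$ can only occur at multiples of $-K$ when $k=9$, and $-K$ decomposes into curves. This gives the first claim: the boundary is generated by curves with self-intersection $\le -1$. I expect this step to be the \textbf{main obstacle}, especially the $k=9$ case, where $-K$ is isotropic and infinitely many $(-1)$-curves can accumulate. There one must check that the accumulation direction contributes nothing new to the dual cone.

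\textbf{Reverse inclusion $\overline{A}^{\vee,>0}_J\subseteq\cK_J^c$ via inflation.} Start from a compatible form $\omega_0$ (tamed $J$ on a rational surface can be deformed to compatible by a standard argument). Take a target $\alpha$ in the dual cone and move along the segment $[\omega_0]\to\alpha$. At each stage, write the needed change $\alpha-[\omega_t]$ as a nonnegative combination of $\mathrm{PD}[C]$ for $J$-curves $C$, which is possible by the structure step.

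Then apply inflation along each such curve. For $C^2\ge0$, use McDuff's $J$-inflation, which allows arbitrary positive multiples of $\mathrm{PD}[C]$. For $C^2<0$, use Buse's negative inflation, which allows adding $t\,\mathrm{PD}[C]$ with $t<\omega(C)/(-C^2)$; the tame-to-tame version is \cite{CPP-JTtameInflation}. The constraint $t<\omega(C)/(-C^2)$ is exactly the condition of staying in the dual cone, so finitely many steps reach $\alpha$.

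Inflation preserves tameness, respectively compatibility, of $J$. This yields both $\alpha\in\cK_J^c$ and the path by $J$-tame inflation between any two classes of $\cK_J^t$. Combined with the easy inclusions, it proves the equalities $\cK_J^t=\cK_J^c=\overline{A}^{\vee,>0}_J(M)$.
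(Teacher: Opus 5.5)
The paper does not prove this theorem. It quotes it from \cite{Zhang-CurveCone} and uses it as a black box, remarking only that Zhang's argument is based on the inflation lemma. Your outline has that overall shape, but it does not stand on its own: there are two genuine gaps.

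\emph{The equality $\cK_J^t=\cK_J^c$.} Your reverse inclusion starts from a $J$-compatible form $\om_0$, which you say exists ``by a standard argument''. It does not.
\begin{itemize}
\item Here $J$ is fixed, and whether a given tamed $J$ is compatible with \emph{some} symplectic form is Donaldson's tamed-to-compatible question.
\item Taubes answered it only for \emph{generic} tamed $J$ on $4$-manifolds with $b^+=1$. For an arbitrary tamed $J$ there is no soft argument.
\item Since $\cK_J^t\neq\emptyset$, the non-emptiness of $\cK_J^c$ is itself part of the statement you are trying to prove.
\item Your claim that inflation ``preserves compatibility'' also needs justification. The inflation lemmas of McDuff and Buse, and the version in \cite{CPP-JTtameInflation}, are stated for taming forms.
\end{itemize}
So at best your sketch addresses $\cK_J^t=\overline{A}^{\vee,>0}_J(M)$.

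\emph{The tame part.} Even that part is incomplete.
\begin{itemize}
\item \emph{Embeddedness.} Inflation lemmas apply to \emph{embedded} $J$-curves: McDuff's for $C^2\ge 0$, Buse's for $C^2<0$. For a fixed non-generic $J$, Seiberg--Witten theory plus Gromov compactness only produces $J$-subvarieties in a class. These may be singular, multiply covered, or reducible with negative-square components. So ``write the change as a nonnegative combination of $\mathrm{PD}[C]$ and inflate along each $C$'' does not follow from your structure step. You need an extra input: either smooth $J$-curves in enough classes of nonnegative square (for instance smooth representatives of $J$-nef spherical classes through generic points), or an inflation lemma for singular or reducible curves.
\item \emph{Rescaling.} Inflation only adds nonnegative combinations of curve classes, so $\alpha-[\om_t]$ is in general not of that form. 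You should aim at $\lambda\alpha$ with $\lambda\gg1$ and then rescale. This works because $\overline{A}_J(M)$ contains the closed forward positive cone, which forces $\alpha$ into the open positive cone.
\item \emph{The structure step} is only announced. Three facts are precisely where the work lies, and ``a Hodge-index style computation with respect to $-K$'' does not supply them:
\begin{itemize}
\item extremal rays of negative square are spanned by curves (this needs Gromov compactness to get finitely many curve classes of bounded area);
\item $K$-negative isotropic rays are not extremal when $k\ge 2$;
\item the $k=9$ accumulation at $-K$ is controlled, so that the boundary is generated (in the closure sense) by curves of square $\le -1$ even when $-K$ is an embedded torus.
\end{itemize}
\end{itemize}

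You do not need the structure theorem for the easy inclusion $\cK_J^t\subseteq\overline{A}^{\vee,>0}_J(M)$. Taming is an open condition, so $[\om]+\epsilon\beta\in\cK_J^t$ for every small class $\beta$. This forces $[\om]$ to be strictly positive on $\overline{A}_J(M)\setminus\{0\}$.
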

Assuming this result, we can describe the stability of embedding spaces as follows.  
Given \(J \in \cA(\B\delta,\rel \Sigma)\), let \(\cS(J)^{\leq -1}\) denote the set of classes represented by embedded \(J\)-holomorphic spheres with negative self-intersection. Then, by performing \(J\)-tame inflation, one can reach any other symplectic class for which all classes in \(\cS(J)^{\leq -1}\) have strictly positive symplectic area. In other words, \(J\)-tame inflation is constrained only by the positivity of the symplectic pairing on the classes in \(\cS(J)^{\leq -1}\). In particular, this yields the following convenient criterion for the inclusion
\[
\cA(\B\delta,\rel \Sigma) \subset \cA(\B{\delta'},\rel \Sigma).
\]

\begin{corollary}\label{cor:criterion for inclusion}
Let $J\in\cA(\B\delta,\rel \Sigma)$. If $\B{\delta'}$ is another set of admissible capacities for which the class $[{\om}_{\B{\delta'}}]$ evaluates strictly positively on all classes in $\cS(J)^{\leq -1}$, then $J\in \cA(\B{\delta'},\rel \Sigma)$. 
\end{corollary}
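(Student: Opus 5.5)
The plan is to deduce the corollary directly from Theorem~\ref{thm:Nakai-Moishezon duality}, namely from the identification $\cK_J^t=\overline{A}^{\vee,>0}_J(\widetilde{M}_n)$ together with the inflation statement, and then to check that the relative constraint near $\Sigma$ is preserved. Start with $J\in\cA(\B\delta,\rel\Sigma)$. Then $J$ is tamed by some $\om\in\Omega(\B\delta,\rel\Sigma)$ and coincides with $\tJ_0$ near $\Sigma$. Consequently $[\om_{\B\delta}]\in\cK_J^t$, and $[\om_{\B\delta}]$ is strictly positive on $\overline{A}_J\setminus\{0\}$.

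First, show that $[\om_{\B{\delta'}}]$ lies in $\overline{A}^{\vee,>0}_J$. By Theorem~\ref{thm:Nakai-Moishezon duality}, the boundary of the curve cone is generated by classes of irreducible $J$-curves with self-intersection $\le -1$. Each such irreducible curve has negative square, so adjunction together with the structure of rational surfaces with $\chi\le 12$ forces it to be an embedded sphere. Its class therefore lies in $\cS(J)^{\leq -1}$, where $[\om_{\B{\delta'}}]$ is positive by hypothesis. It remains to treat classes of nonnegative square. For these, positivity follows from the light-cone lemma: $[\om_{\B{\delta'}}]^2>0$ because $\B{\delta'}$ is admissible, and $[\om_{\B{\delta'}}]$ lies in the same component of the positive cone as $[\om_{\B\delta}]$, since both pair positively with $L$. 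Combining the two cases gives positivity on the extremal rays of $\overline{A}_J$, and hence on all of $\overline{A}_J\setminus\{0\}$. Duality then gives $[\om_{\B{\delta'}}]\in\cK_J^t$. Here I would need $n\le 9$, as in the theorem; for larger $n$ I would instead argue only with the curves actually used in the inflation.

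Second, and this is the step I expect to be the main obstacle, I must upgrade "tamed by some form in the class $[\om_{\B{\delta'}}]$" to "tamed by some form in $\Omega(\B{\delta'},\rel\Sigma)$", that is, by a form agreeing with $\om_{\B{\delta'}}$ near $\Sigma$. I would follow the inflation path of Theorem~\ref{thm:Nakai-Moishezon duality} from $\om$ to the class $[\om_{\B{\delta'}}]$. Changing the capacities amounts to inflating or deflating along the exceptional curves $E_i$ and along curves in $L-\sum E_i$-type classes. Since $J=\tJ_0$ near $\Sigma$, the $E_i$ are $J$-holomorphic, and the inflation forms supported in a tubular neighborhood of $E_i$ can be chosen $\tJ_0$-standard, namely as the standard blow-up forms of the new capacities. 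Inflation along curves transverse to $\Sigma$ alters the form near $\Sigma$ only by terms that are standard in normal coordinates. I would then invoke the relative uniqueness of the standard neighborhood, in the form used in the Appendix, to isotope the resulting tame form, keeping $J$ tame by convexity of taming, so that it agrees with $\om_{\B{\delta'}}$ near $\Sigma$. The argument closes with a Moser-type correction supported away from $\Sigma$. This yields $J\in\cA(\B{\delta'},\rel\Sigma)$.
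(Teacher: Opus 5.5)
Your first step is the paper's entire argument. The paper derives the corollary directly from Theorem~\ref{thm:Nakai-Moishezon duality}, i.e.\ from $\cK^t_J=\overline{A}^{\vee,>0}_J$ together with tame inflation. Your additions are sound. For $n\le 9$, adjunction shows that irreducible negative curves are embedded spheres, so they lie in $\cS(J)^{\leq -1}$. The light-cone lemma gives strict positivity of $[\om_{\B{\delta'}}]$ on all nonzero rays of nonnegative square in $\overline{A}_J$, including limits of infinitely many negative curves. Since $J$ is never changed, this already proves the statement for the unconstrained spaces $\cA(\B\delta,\Sigma)$.

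The gap is in your second step, whenever some $\delta'_i>\delta_i$.

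(i) Your claim that inflation along curves transverse to $\Sigma$ changes the form near $\Sigma$ only by standard terms is false. By positivity of intersections, raising $\om(E_i)$ requires inflating along a $J$-curve $C$ with $C\cdot E_i>0$, so $C$ meets $\Sigma_i$. Near $\Sigma_i$, where $J=\tJ_0$, such a $C$ is a holomorphic curve in the model $\mathcal{O}(-1)$ transverse to the zero section, and in general it is not a fiber of $\mathcal{O}(-1)\to\cp^1$. The Thom form used in tame inflation lives in a thin neighborhood of $C$, so its restriction to $\Sigma_i$ is a bump of mass $C\cdot E_i$ near $C\cap\Sigma_i$. Hence the inflated form is not of the model type $\pi^*\om_0+\alpha\,\pr^*\om_{FS}$ near $\Sigma$. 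Deflating along $E_i$ does not help either: adding $-t\tau$ creates the term $-tf'(s)\,ds\wedge d\theta$, which is negative on the $J$-holomorphic fiber discs. It is therefore tame only for $t$ below the fiber area of the collar on which $J=\tJ_0$, and that collar may be arbitrarily thin.

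(ii) Your repair moves the wrong object. The neighborhood theorem and Moser's argument replace $\om$ by $\phi^*\om$, which tames $\phi^*J$, not $J$. With $J$ held fixed you must instead glue, $\om_{\B{\delta'}}+d(\chi\beta)$. Convexity of taming does not control the term $d\chi\wedge\beta$. The two forms induce different area forms on $\Sigma$, so $\beta$ cannot vanish along $\Sigma$, and this term grows as the collar shrinks.

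The paper does not supply this step either. Its only relative inflation is the lemma in Appendix~\ref{Appendix}: inflation along $\Sigma$ itself with a Thom form equal to $-\om_{FS}$ near $\Sigma$, which only decreases capacities. Remark~\ref{remark:note on spaces A and inflation} says explicitly that inflation along curves meeting $\Sigma$ positively cannot be made compatible with standardness near $\Sigma$; this is why Section~\ref{S:main} works with $\cA(\B\delta,\Sigma)$.

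As written, your argument proves the relative statement only for $\B{\delta'}\le\B\delta$. In that range the positivity hypothesis is automatic and inflation along the $E_i$ suffices. For larger capacities you would need a genuinely new lemma that makes a $J$-tame form standard near $\Sigma$ while $J$ is held fixed.
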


This criterion allows us to avoid explicitly describing the inflation procedures required to establish stability of the embedding spaces. In particular, it provides a practical way to determine whether a point on the boundary of an open stability component belongs to that component.

To make this criterion more effective, we define the notion of a wall for a rational symplectic $4$-manifold $(M,\om)$ as follows. First, define 
\begin{align*}
\cS(\B\delta)^{\leq-1} &:= \bigcup_{J\in\cA(\B\delta,\rel \Sigma)} \cS(J)^{\leq -1}\\
\cS(M,\om)^{\leq -1} &:= \bigcup_{\B\delta\in\cC_n} \cS(\B\delta)^{\leq -1}
\end{align*}
Each $A\in \cS(M,\om)^{\leq -1}\subset H_2(\widetilde{M}_n;\Z)$ defines a linear functional
\[
\ell_A\colon H_2(\widetilde{M}_n;\R)\to \R, \quad \ell_A(B) = A\cdot B.
\]
The kernel of $\ell_A$ is called the \textit{wall} corresponding to $A$. It is a hyperplane in $H_2(\widetilde{M}_n;\R)$, denoted by $H_A$. These hyperplanes may or may not intersect the region $\cC_n$ of ordered admissible capacities.

\begin{corollary}\label{cor:no chambers in walls}
Let $\B\delta$ be a set of admissible capacities lying on the intersection of the walls defined by the classes $A_1,\ldots,A_k$, that is, such that $\om_{\B\delta}(A_i)=0$ for all $i$. Then $\B\delta$ belongs to the stability component in which the classes $A_i$ have strictly negative symplectic areas. In particular, each stability component for the embedding problem has nonempty interior.
\end{corollary}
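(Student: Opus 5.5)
The plan is to show that $\cA(\B\delta,\rel\Sigma)$ equals $\cA(\B\delta^-,\rel\Sigma)$ for nearby capacities $\B\delta^-$ on which every $A_i$ has strictly negative area. Corollary~\ref{cor:criterion for inclusion} handles each inclusion, and the stability framework of Section~\ref{S:preliminaries} then gives that $\Emb(\B\delta_t,M)$ has constant homotopy type along the segment from $\B\delta$ to $\B\delta^-$.

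First I choose $\B\delta^-$. The walls meeting $\cC_n$ should form a locally finite family in a neighbourhood of $\B\delta$; I take this from the structure of the sets $\cS(\B\delta)^{\leq -1}$, namely finitely many classes of bounded area relevant near a point of the open cone. Hence there is a small open set $U\ni\B\delta$ met only by walls through $\B\delta$. The sign conventions must be checked here: a class in $\cS(J)^{\leq -1}$ for some $J\in\cA(\B\delta,\rel\Sigma)$ is represented by a $J$-holomorphic sphere, so its $\om_{\B\delta}$-area is positive. A wall class $A_i$ with $\om_{\B\delta}(A_i)=0$ therefore cannot be represented by $J$-spheres for any $J\in\cA(\B\delta,\rel\Sigma)$. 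I then pick $\B\delta^-\in U$ on the side where $\om_{\B\delta^-}(A_i)<0$ for every $i$, and more generally so that no class in $\cS(M,\om)^{\leq-1}$ whose wall passes through $\B\delta$ has positive area at $\B\delta^-$ while vanishing at $\B\delta$. Such a point exists if the relevant functionals $\ell_{A_i}$ are not identically zero on $\cC_n$; I would take a generic perturbation in the direction $-\sum A_i$ (suitably projected). This is consistent only if the interior side is nonempty, which follows because $\cC_n$ is open and the hyperplanes are proper.

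Next I prove two inclusions. For $J\in\cA(\B\delta,\rel\Sigma)$, every class in $\cS(J)^{\leq-1}$ has strictly positive $\om_{\B\delta}$-area. Only finitely many such classes have bounded area, and those whose wall avoids $U$ keep positive area throughout $U$. So $[\om_{\B\delta^-}]$ is positive on $\cS(J)^{\leq-1}$, and Corollary~\ref{cor:criterion for inclusion} gives $J\in\cA(\B\delta^-,\rel\Sigma)$. Conversely, for $J\in\cA(\B\delta^-,\rel\Sigma)$, a class $B\in\cS(J)^{\leq-1}$ has positive area at $\B\delta^-$. If $\om_{\B\delta}(B)\le 0$, the wall of $B$ passes through $\B\delta$ or separates $\B\delta$ from $\B\delta^-$. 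The latter is excluded by the choice of $U$. In the former case $B$ is one of the wall classes through $\B\delta$, and by the choice of side it has negative area at $\B\delta^-$, a contradiction. Hence the criterion applies again and $J\in\cA(\B\delta,\rel\Sigma)$. The same argument works for every point of the segment $[\B\delta,\B\delta^-]$, so the spaces agree along it and $\B\delta$ lies in the stated stability component.

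The final claim follows by applying this at any $\B\delta$: $\B\delta^-$ is chosen in an open chamber near $\B\delta$, so each stability component contains an open set. The main obstacle is the local finiteness of walls near $\B\delta$, that is, controlling all classes in $\cS(M,\om)^{\leq-1}$ that could have small area near $\B\delta$. I would first try to use adjunction together with positivity of area and the bound $A\cdot A\le -1$ to bound the coefficients of such classes in terms of the area. This may fail near the round boundary, where walls accumulate (as in Proposition~\ref{prop:accumulation points}). Since $\B\delta$ is an interior admissible point, however, a compactness argument in the interior of the cone should suffice.
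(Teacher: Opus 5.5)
Your overall argument is the one the paper intends; the paper states this corollary without proof, as a consequence of Corollary~\ref{cor:criterion for inclusion}. Classes with zero area at $\B\delta$ are never represented for $J\in\cA(\B\delta,\rel\Sigma)$. Applying the criterion in both directions on a small convex neighbourhood meeting only the walls through $\B\delta$ then gives $\cA(\B\delta,\rel\Sigma)=\cA(\B\delta^-,\rel\Sigma)$. Given Corollary~\ref{cor:criterion for inclusion} as stated, your two inclusions, the constancy along the segment and the nonempty-interior claim are correct.

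The local finiteness you single out as the main obstacle is proved in the paper as Proposition~\ref{prop:accumulation points} and the corollary after it. It applies here because $\cC_n\subset\cP^+_n$ (as $\om_{\B\delta}^2=1-\sum_i\delta_i^2>0$), and every class of $\cS(M,\om)^{\le -1}$ other than the $E_l$ lies in $\cH_n$. The mechanism there is uniform negative definiteness of the intersection form on $\om^\perp$, combined with Cauchy--Schwarz and adjunction, not a bound on coefficients in terms of area. You should simply cite it.

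The step that does not work as written is the construction of $\B\delta^-$.
\begin{itemize}
\item Knowing that the functionals $\ell_{A_i}$ are not identically zero does not give a common side on which all of them are negative. Two walls through $\B\delta$ could a priori coincide with opposite co-orientations, or more generally $0$ could lie in the convex hull of their gradients.
\item Your direction $-\sum_i A_i$ has the wrong sign: moving ${\rm PD}[\om]$ by $-tA_j$ changes $\om(A_j)$ by $-t\,A_j\cdot A_j>0$.
\item Even $+\sum_iA_i$ can fail, since $(\sum_iA_i)\cdot A_j=A_j\cdot A_j+\sum_{i\ne j}A_i\cdot A_j$ may be positive.
\end{itemize}

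The missing input is positivity of the coefficients. Take $A=a_0L-\sum_la_lE_l\in\cS(M,\om)^{\le-1}$ that is not one of the $E_l$; the walls $\delta_l=0$ of the $E_l$ do not meet $\cC_n$ anyway. Then $a_l=A\cdot E_l\ge0$ by positivity of intersections with the $J$-holomorphic exceptional divisors, cf.~\eqref{Eq:ai>0}. The $a_l$ do not all vanish, because $A\cdot A<0$. Hence $\om_{\B\delta+t(1,\ldots,1)}(A)=\om_{\B\delta}(A)-t\sum_la_l$. So $\B\delta^-:=\B\delta+t(1,\ldots,1)$ with $t>0$ small lies in $U$ and makes every wall class through $\B\delta$ strictly negative. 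This is the same positive-orthant property the paper uses with Lemma~\ref{lemma:solstice lemma}.

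One further looseness comes from the paper rather than from you. Any $\B\delta^-$ on which all these classes are negative must enlarge some capacity. So your inclusion $\cA(\B\delta,\rel\Sigma)\subseteq\cA(\B\delta^-,\rel\Sigma)$ uses Corollary~\ref{cor:criterion for inclusion} in exactly the direction that Remark~\ref{remark:note on spaces A and inflation} says cannot be achieved by inflation keeping $J$ standard near $\Sigma$.
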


\begin{remark}\label{remark:Relations to LLW}
Stability components for the homotopy type of symplectomorphism groups of rational $4$-manifolds are investigated in~\cite{Anjos-al-Stability} and~\cite{LLW-Torelli}. Although the techniques are similar to those used for embedding spaces, the two problems differ in an essential way. In the case of symplectomorphism groups, since every symplectic form is Cremona equivalent to a reduced one, one can restrict the discussion to symplectic forms representing \emph{reduced} classes. As Cremona transformations correspond to reflections along $(-2)$ walls, it follows that these walls bound the set of reduced classes -- they are \emph{exterior walls} -- and may themselves contain proper stability components of strictly positive codimension. However, for the embedding problem on $\cp^2$, Cremona transformations are no longer available, as the only Cremona transformation that leaves the classes $E_1,\ldots, E_n$ invariant is the identity. It follows that $(-2)$ walls are \emph{interior} walls of the set of admissible capacities and, by Corollary~\ref{cor:no chambers in walls}, never contain proper stability components. Actually, for the embedding problem on $\cp^2$, the set of admissible capacities $\cC_n$ is the $K$-symplectic cone of the $n$-fold blow-up, and the exterior walls are all given by $(-1)$ classes.\eoe
\end{remark}

\section{Proof of Theorem \ref{T:main}}\label{S:main}

Recall that, given sets of capacities $\B\delta = (\delta_1,\ldots,\delta_n)$ and $\B{\delta'}=(\delta'_1,\ldots,\delta'_n)$, we write $\B\delta\leq \B{\delta'}$ if $\delta_i\leq \delta'_i$ for all $i=1,\ldots, n$. Let $(M_{\B{\delta}}, \tom_{\B{\delta}})$ denote the symplectic blow-up of $\cp^2$ along balls of capacities $\B{\delta}=(\delta_1, \ldots, \delta_n)$. Let $\cJ(\tom_{\B\delta}, \Sigma)$ be the space of almost complex structures $J$ tamed by $\tom_{\B\delta}$ and such that $\Sigma$ is $J$-holomorphic. Finally, let $\cA(\B\delta, \Sigma)$ be the space of almost complex structures $J$ tamed by some symplectic form cohomologous to $\tom_{\B\delta}$ and for which $\Sigma$ is $J$-holomorphic. We work with $\cA(\B\delta,\Sigma)$, with no constraint
near $\Sigma$, because increasing capacities requires inflation along
curves meeting $\Sigma$, see Remark~\ref{remark:note on spaces A and inflation}.

\begin{lemma}\label{lemma:decreasing capacities}
If $\B\delta\leq \B{\delta'}$ then there is an inclusion $\cA(\B{\delta'}, \Sigma) \subset \cA(\B {\delta}, \Sigma).$
\end{lemma}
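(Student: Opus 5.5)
Given $J\in\cA(\B{\delta'},\Sigma)$, we need to show that $J$ is tamed by some symplectic form in the class $[\tom_{\B\delta}]$ for which $\Sigma$ is $J$-holomorphic. Since $\Sigma$ is already $J$-holomorphic, this is automatic once such a form exists. The plan is to apply $J$-tame inflation along the exceptional spheres themselves. The curves $E_i$ are embedded $J$-holomorphic spheres of self-intersection $-1$. Decreasing the capacity $\delta'_i$ to $\delta_i$ increases the area of $E_i$'s Poincaré dual coefficient, i.e.\ it passes from $L-\sum\delta'_iE_i$ to $L-\sum\delta_iE_i$. This means adding $t_i\,\mathrm{PD}(E_i)$ with $t_i=\delta'_i-\delta_i\ge 0$, and that is exactly the direction of inflation along $E_i$.

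\textbf{First route (Nakai--Moishezon).} The cleanest approach is Theorem~\ref{thm:Nakai-Moishezon duality} when $2\le n\le 9$. It says $\cK_J^t=\overline{A}_J^{\vee,>0}$, so it suffices to check that $[\tom_{\B\delta}]$ is positive on every nonzero class in the closed curve cone of $J$. Write
\[
[\tom_{\B\delta}] = [\tom_{\B{\delta'}}] + \sum_i (\delta'_i-\delta_i)\,\mathrm{PD}(E_i),
\]
a sum of a class positive on $\overline{A}_J\setminus 0$ and nonnegative multiples of $\mathrm{PD}(E_i)$. For an irreducible $J$-curve $C$ not equal to some $E_j$, positivity of intersections gives $E_i\cdot C\ge 0$, so the added terms are nonnegative on $C$. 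For $C=E_j$, the pairing is $1-\ldots$; directly, $\tom_{\B\delta}(E_j)=\delta_j>0$. Hence the class is nonnegative on generators and strictly positive on all irreducible curves. Strict positivity on the closure follows because $[\tom_{\B{\delta'}}]$ is strictly positive there and the other terms are nonnegative on the closure by continuity.

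\textbf{Second route (direct inflation).} This avoids the restriction on $n$ and is the one I would actually write. Use the McDuff/Buse $J$-tame inflation lemma along an embedded $J$-sphere $Z$ with $Z\cdot Z=-1$. Given a taming form $\omega$, for $0\le t<\omega(Z)$ (the negative-curve bound $t<\omega(Z)/|Z\cdot Z|$) there is a $J$-taming form in the class $[\omega]+t\,\mathrm{PD}(Z)$. Here $\omega(E_i)$ increases as $t$ increases, so the bound is never violated along the way and we may inflate successively, or even iterate in small steps, to reach any $t_i\ge 0$. The $E_i$ are disjoint, so we can inflate along them one at a time, and each inflation keeps the previous areas positive.

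The main point to verify is the exact form of the inflation constraint for negative curves. In Buse's negative inflation, the admissible range is $t<\omega(Z)/(-Z\cdot Z)$, and this range grows along the flow. An iteration argument, inflating by half the allowed amount at each step, therefore reaches arbitrary $t$. Once this is in place, $J$ is tamed by a form cohomologous to $\tom_{\B\delta}$, so $J\in\cA(\B\delta,\Sigma)$. This gives the inclusion $\cA(\B{\delta'},\Sigma)\subset\cA(\B\delta,\Sigma)$.
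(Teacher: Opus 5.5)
Your second route is the paper's argument: $J$-tame inflation along the embedded $J$-spheres representing the $E_i$. However, your check of the inflation constraint has the sign backwards. Adding $t\,\mathrm{PD}(E_i)$ changes the area of $E_i$ to $\omega(E_i)+t\,(E_i\cdot E_i)=\omega(E_i)-t$. So $\omega(E_i)$ \emph{decreases} along the inflation, as it must, since $\tom_{\B\delta}(E_i)=\delta_i\le\delta'_i$. You seem to have conflated the coefficient of $E_i$ in the Poincar\'e dual, which does increase from $-\delta'_i$ to $-\delta_i$, with the area $\omega(E_i)$. Consequently:
\begin{itemize}
\item the admissible range $t<\omega(E_i)$ shrinks rather than grows;
\item your ``half the allowed amount'' iteration adds up to $\omega(E_i)(\tfrac12+\tfrac14+\cdots)=\omega(E_i)$;
\item arbitrary $t$ cannot be reached, because any form taming $J$ must give the $J$-sphere $E_i$ positive area.
\end{itemize}
The correct check is simpler and needs no iteration. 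The required amount $t_i=\delta'_i-\delta_i$ satisfies $0\le t_i<\delta'_i$ exactly because $\delta_i>0$, so a single negative inflation along each $E_i$ suffices. Since $E_i\cdot E_j=0$ for $i\neq j$, inflating along $E_i$ leaves the area of $E_j$ unchanged, so the spheres can be handled one at a time. Finally, $J$ is never modified, so $\Sigma$ stays $J$-holomorphic.

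Your first route has a faulty last step. $\mathrm{PD}(E_i)$ is not nonnegative on $\overline{A}_J$: $E_i$ itself lies in the curve cone and $\mathrm{PD}(E_i)(E_i)=-1$. To repair it, write $a\in A_J$ as $\sum_i e_iE_i+b$, with $e_i\ge 0$ and $b$ a nonnegative combination of irreducible curves different from the $E_j$. Positivity of intersections then gives $\tom_{\B\delta}(a)\ge\lambda\,\tom_{\B{\delta'}}(a)$ with $\lambda=\min_i\delta_i/\delta'_i\in(0,1]$, and this inequality passes to the closure. Even so, that route only covers $2\le n\le 9$, whereas the lemma is needed for all $n$ in Theorem~\ref{T:main}. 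The inflation argument, with the corrected bound, is the one to keep.
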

\begin{proof}
Since, by assumption, the exceptional classes $E_i$ are represented by embedded $J$-holomorphic spheres for all $J\in \cA(\B {\delta'},\Sigma)$, we can perform $J$-tamed inflation along these representatives to decrease the capacities $\delta_i'$ to any given numbers $0<\delta_i<\delta_i'$.
\end{proof}

\begin{lemma}\label{lemma:existence fiber classes}
For any $J\in\cA(\B{\delta}, \Sigma)$, the fiber class $F_i=L-E_i$ is represented by embedded $J$-holomorphic spheres. 
\end{lemma}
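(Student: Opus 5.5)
The plan is the standard Gromov–McDuff argument for the fiber class $F_i=L-E_i$, which satisfies $F_i\cdot F_i=0$ and $K\cdot F_i=-2$, using that $\delta_1+\cdots+\delta_n<1$ holds throughout the relevant range. I will first show that $F_i$ has a nonzero Gromov–Witten invariant. For a generic (e.g. integrable) structure on $\widetilde M_n$ obtained by blowing up $n$ generic points of $\cp^2$, the $J$-holomorphic curves in class $F_i$ are the proper transforms of lines through the $i$-th point. These form a pencil of embedded spheres, with exactly one through a generic point of $\widetilde M_n$. Hence the invariant with one point constraint is $1$, and it is independent of $J$ and of the symplectic form in the class $[\tom_{\B\delta}]$. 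Consequently, for every $J\in\cA(\B\delta,\Sigma)$ and every point $p$ there is a $J$-holomorphic stable map in class $F_i$ through $p$, obtained as a Gromov limit of embedded spheres for generic $J$.

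Next I will rule out degenerations for an arbitrary tame $J$. Suppose the limit is a nodal curve $\sum m_k C_k$ with $F_i=\sum m_k[C_k]$ and at least two components, or a multiply covered component. Every component has positive $\tom$-area, and $\tom_{\B\delta}(F_i)=1-\delta_i$. Write $[C_k]=a_kL-\sum_j b_{kj}E_j$. Positivity of intersections with the $J$-holomorphic exceptional spheres $E_j\subset\Sigma$ gives $b_{kj}\ge 0$ unless $C_k$ is one of the $E_j$ itself. Since $\sum a_k=1$ with $a_k\ge 0$ (the symplectic area of $L$ dominates), exactly one component has $a=1$, and all other components have $a=0$. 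A component with $a=0$ has class $-\sum b_jE_j$ with all $b_j\le 0$, and by positivity of area this forces it to be an exceptional curve $E_j$ or a sum of such. The remaining component then has class $L-E_i-\sum_{j} c_jE_j$ with $c_j\ge 0$. Its area is $1-\delta_i-\sum c_j\delta_j>0$ because $\sum\delta<1$, so area gives no obstruction there.

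Instead I will argue via adjunction and the fact that the $E_j$ are $J$-holomorphic, which is the point where the condition $\Sigma$ $J$-holomorphic is essential. The class of the $a=1$ component must be represented by an irreducible curve, so adjunction requires $g\ge 0$. For the class $L-E_i-\sum c_jE_j$ with $c_j\in\{0,1\}$, adjunction gives a $(1-\#)$-curve of genus $0$. The decomposition $F_i=(L-E_i-\sum c_jE_j)+\sum c_jE_j$ then requires $c_j$ to be the coefficient on $E_j$ in $F_i$, which for $j\neq i$ is $0$; and $c_i=-1$ would give a negative coefficient, contradicting positivity against $E_i$. Hence $c_j=0$ and the curve is irreducible. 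Simplicity follows because $F_i$ is primitive.

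Finally, an irreducible simple $J$-sphere in class $F_i$ with $F_i^2=0$ and $K\cdot F_i=-2$ is embedded by the adjunction inequality. The main obstacle is the bookkeeping in the second step: correctly handling components equal to some $E_j$ with negative coefficient, and making sure the bound $a_k\ge 0$ is justified for tame $J$. For the latter I would use positivity of intersection with a $J$-curve in class $L-E_i$ through a generic point, or the area estimate coming from Zhang's curve-cone description in Theorem~\ref{thm:Nakai-Moishezon duality} when $n\le 9$.
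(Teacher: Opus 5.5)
\textbf{There is a genuine gap in your third paragraph.} Your bookkeeping correctly reduces the possible nodal limits to
$\bigl(L-E_i-\sum_{j\in S}E_j\bigr)+\sum_{j\in S}E_j$ with $S\subseteq\{1,\dots,n\}\setminus\{i\}$. You then claim the decomposition forces $c_j=0$. But $(L-E_i-\sum_j c_jE_j)+\sum_j c_jE_j=L-E_i$ holds identically, so homology puts no constraint on the $c_j$.

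Moreover, these degenerations really occur. For the integrable structure they are the total transforms of lines through $p_i$ and $p_j$, or through $p_i$ and several collinear points. For any $J\in\cA(\B\delta,\Sigma)$ with $n\ge 2$, the stable map in class $F_i$ through a point of $E_j$ ($j\neq i$) must contain $E_j$, because $F_i\cdot E_j=0$. So the claim you set out to prove, that the Gromov limit through \emph{every} point is irreducible, is false, and the argument as written does not reach an embedded sphere.

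\textbf{The missing idea is the paper's last step.} Every component class occurring in these degenerations, namely $L-E_i-\sum_{j\in S}E_j$ with $S\neq\emptyset$ and each $E_j$, has negative self-intersection. By positivity of intersections, each such class has at most one irreducible $J$-representative. Hence all reducible configurations lie in a finite union of curves $C$.

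Now choose the point constraint $q\notin C$. The stable map through $q$ given by your invariant computation must be irreducible. It is simple because its $L$-degree is $1$, and it is then embedded by adjunction. This proves existence of embedded representatives, which is all the lemma asserts; it does not give a foliation through every point.

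Apart from this, your route is compatible with the paper's. You use the Gromov--Witten invariant computed from the integrable pencil rather than Taubes' $\mathrm{Gr}$, and your classification of components matches the paper's. Your worry about $a_k\ge 0$ is easily settled: a component other than some $E_j$ has $b_{kj}\ge 0$, so positivity of area $a_k-\sum_j b_{kj}\delta_j>0$ already forces $a_k\ge 1$. Neither Zhang's theorem nor $\sum\delta_j<1$ is needed.
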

\begin{proof}
Since any two symplectic forms cohomologous to $\tom_{\B\delta}$ are diffeomorphic, and because any two embedded representatives of an exceptional class $E_i$ are symplectically isotopic, it suffices to prove the claim for all $J\in \cJ(\tom_{\B\delta}, \Sigma)$.

Since $k(F_i)=(F_i^2+c_1(F_i))/2 = 1$, Taubes' theorem implies that for a generic pair $(p,J)$ in $M_{\B{\delta}}\times \cJ(\tom_{\B\delta}, \Sigma)$, there is exactly one embedded $J$-holomorphic representative passing through $p$, and that the set of generic pairs is dense in $M_{\B{\delta}}\times \cJ(\tom_{\B\delta}, \Sigma)$. By the adjunction formula, such a representative is a sphere. Pick an arbitrary pair $(p,J)$ in $M_{\B{\delta}}\times \cJ(\tom_{\B\delta}, \Sigma)$ and consider a sequence of generic pairs $(p_j,J_j)\in M_{\B{\delta}}\times \cJ(\tom_{\B\delta}, \Sigma)$ converging to $(p,J)$. Let $S_j$ be the embedded fiber corresponding to $(p_j,J_j)$. By Gromov's convergence theorem, there is a subsequence converging to a $J$-holomorphic sphere $S$ or to a reducible cusp curve $C=\cup_k C_k$ passing through $p$. In the former case, the adjunction formula implies that $S$ must be embedded, which proves the statement. In the latter case, because the classes $E_i$ are all represented by $J$-holomorphic spheres, the only way the class $F_i=L-E_i$ can decompose into $J$-holomorphic representatives is as a union of the form $\big(L-E_i-\sum_\ell E_\ell \big) \cup_\ell E_\ell$. Since each component is of negative self-intersection, such a curve is isolated. Consequently, there must exist embedded $J$-holomorphic representatives of $F_i$ that contain other generic points $q\not\in C$.
\end{proof}

\begin{proposition}\label{prop:Stability holds in small chamber}
For any $\B\delta, \B{\delta'}$ in the region $\Delta_0$, we have $\cA(\B\delta,\Sigma)=\cA(\B{\delta'}, \Sigma)$.
\end{proposition}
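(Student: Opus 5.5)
Fix $J\in\cA(\B\delta,\Sigma)$ with $\B\delta\in\Delta_0$. By Corollary~\ref{cor:criterion for inclusion} (in its version for $\cA(\cdot,\Sigma)$), it suffices to show that every class $A\in\cS(J)^{\leq -1}$ pairs positively with $[\tom_{\B{\delta'}}]$ for all $\B{\delta'}\in\Delta_0$. Exchanging the roles of $\B\delta$ and $\B{\delta'}$ then gives equality. So the plan is to classify the embedded $J$-holomorphic spheres of negative self-intersection that can exist when the capacities lie in $\Delta_0$.

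Write $A = aL-\sum_i b_iE_i$. If $A=E_i$, then $\om_{\B{\delta'}}(E_i)=\delta_i'>0$ and we are done. Otherwise $A$ is a curve distinct from the $J$-holomorphic exceptional spheres. Positivity of intersections with $E_i$ gives $b_i=A\cdot E_i\geq 0$. By Lemma~\ref{lemma:existence fiber classes}, for each $i$ the class $F_i=L-E_i$ has embedded $J$-representatives through every point. Hence $A\cdot F_i\geq 0$, that is, $a\geq b_i$. Also $\om_{\B\delta}(A)=a-\sum_i\delta_ib_i>0$ forces $a>0$ unless $A=E_i$.

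The key step is then the estimate
\[
\om_{\B{\delta'}}(A)=a-\sum_i \delta_i' b_i \;\geq\; a - a\sum_i\delta_i' \;=\; a\Big(1-\sum_i\delta_i'\Big)\;>\;0,
\]
using $0\le b_i\le a$ and $\B{\delta'}\in\Delta_0$. This shows that every curve class in $\cS(J)^{\leq-1}$ has positive $\om_{\B{\delta'}}$-area, whatever its self-intersection. Corollary~\ref{cor:criterion for inclusion} then yields $J\in\cA(\B{\delta'},\Sigma)$.

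The main obstacle is justifying the use of Corollary~\ref{cor:criterion for inclusion} for the non-relative spaces $\cA(\cdot,\Sigma)$. The inflation that increases capacities must be done along curves meeting $\Sigma$ while keeping $\Sigma$ $J$-holomorphic, which Zhang's Theorem~\ref{thm:Nakai-Moishezon duality} gives for $k\le 9$. For $n>9$ one cannot invoke Zhang directly. There I would instead inflate explicitly: first along the fiber curves $F_i$ and the line class, using $L=F_i+E_i$, and then along the $E_i$. The goal is to reach $L-\sum\delta_i'E_i$ by $J$-tame inflation along embedded nonnegative curves, whose existence is given by Lemma~\ref{lemma:existence fiber classes} together with Gromov compactness. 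Lemma~\ref{lemma:decreasing capacities} handles the decreasing directions. Combining it with an inflation that increases all capacities proportionally covers the remaining directions, since $\Delta_0$ is convex.
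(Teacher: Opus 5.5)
Your argument is correct, but for $2\le n\le 9$ it follows a genuinely different route from the paper.

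\textbf{Your route.} You work on the curve side. Positivity of intersections with the $J$-holomorphic exceptional spheres, and with the embedded $F_i$-fibers of Lemma~\ref{lemma:existence fiber classes}, forces $0\le b_i\le a$ for every irreducible $J$-curve $aL-\sum_i b_iE_i$ other than the $E_i$. Hence $\om_{\B{\delta'}}(A)\ge a(1-\sum_i\delta_i')>0$ throughout $\Delta_0$. The Nakai--Moishezon duality (Corollary~\ref{cor:criterion for inclusion}, resting on Theorem~\ref{thm:Nakai-Moishezon duality}) then supplies the taming form.

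\textbf{The paper's route.} The paper works on the form side and never invokes Zhang. It deflates along the $E_i$ (Lemma~\ref{lemma:decreasing capacities}) to move $\B\delta$ close to the vertex $\B 0$ of the simplex $\Delta_0$. It then inflates along the $F_i$, each of which pushes the normalized class toward the vertex $e_i$, until it reaches $\B{\delta'}$.

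\textbf{What each buys.} Your version identifies explicitly the curve-theoretic reason, $0\le b_i\le a$, why nothing obstructs inflation inside $\Delta_0$. Because your bound is uniform, it even gives strict positivity on the closure of the curve cone, which is what the duality actually requires. The paper's version is elementary and uniform in $n$. This matters because Theorem~\ref{T:main} is stated for all $n\ge 1$, while Zhang's theorem only covers $2\le k\le 9$. So your main step misses $n=1$ as well as $n\ge 10$.

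\textbf{Your fallback.} For the remaining $n$, your fallback is precisely the paper's argument, and that argument works for every $n$, so the Zhang step is ultimately redundant. The fallback should be made precise, though. Inflating along $F_i$ by $t_i\ge 0$ turns $L-\sum_i\gamma_iE_i$ into $(1+\sum_j t_j)L-\sum_i(\gamma_i+t_i)E_i$, i.e.\ normalized capacities $(\gamma_i+t_i)/(1+\sum_j t_j)$. Starting from some $\B\delta_\epsilon<\B{\delta'}$ obtained by deflation, you can solve for $t_i\ge 0$ landing exactly on $\B{\delta'}$.
\begin{itemize}
\item No inflation along the line class is needed. Nor do the lemmas you cite provide embedded $J$-lines for arbitrary $J\in\cA(\B\delta,\Sigma)$.
\item The operative fact is not convexity of $\Delta_0$. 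It is that the $E_i$-moves head toward $\B 0$ and the $F_i$-moves head toward the $e_i$, and these vertices span $\Delta_0$.
\item Minor point: Lemma~\ref{lemma:existence fiber classes} gives embedded fibers through generic points, not every point. That is all your positivity argument uses.
\end{itemize}
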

\begin{proof}
First observe that $\Delta_0$ is a simplex whose vertices are the origin $\B 0$ and the standard basis vectors $e_i$ which can be identified with the classes $F_i$.

Let $\B\delta, \B{\delta'}\in \Delta_0$ and let $J\in \cA(\B\delta,\Sigma)$. It follows from Lemma~\ref{lemma:decreasing capacities} that the $J$-tame inflation brings $\B\delta$ to $\B{\delta_{\epsilon}}$, which is arbitrarily close to the origin, and consequently that $J\in \cA(\B{\delta_{\epsilon}},\Sigma)$. Similarly, by Lemma~\ref{lemma:existence fiber classes}, the fiber classes $F_i$ are represented by embedded $J$-holomorphic spheres and the inflation along the classes $F_i$ moves $\B{\delta_{\epsilon}}$ in the $e_i$ direction. Thus performing suitable $J$-tame inflations along the classes $F_i$ brings $\B{\delta_{\epsilon}}$ to $\B{\delta'}$ showing that $J\in \cA(\B{\delta'},\Sigma)$. Since $J\in \cA(\B\delta,\Sigma)$ was arbitrary, this shows that $\cA(\B\delta,\Sigma)\subseteq \cA(\B{\delta'},\Sigma)$. An analogous procedure from $\B{\delta'}$ to $\B{\delta}$ shows the required equality.
\end{proof}

\begin{proof}[Proof of Theorem \ref{T:main}]
By~\cite[Theorem 1.3]{Anjos-al-Stability} the homotopy type of the embedding space $\IEmb(\B\delta,\cp^2)$ changes precisely when the homotopy type of the space $\cA(\B\delta, \Sigma)$ changes. Consequently, by Proposition~\ref{prop:Stability holds in small chamber}, stability holds in the region $\Delta_0$.

We now determine the homotopy type of $\Emb(\B\delta,\cp^2)$. Let
\[
\Sp(4;\R)^n \to {\rm SpConf}(n,\cp^2)\to {\rm Conf}(n,\cp^2)
\]
be the symplectic principal bundle associated with the tangent bundle of $\cp^2$. An element of the total space is a configuration of points together
with a symplectic frame at each point. Let $\Psi_{\B{\delta}}\colon \Emb(\B{\delta},\cp^2) \to {\rm SpConf}(n,\cp^2)$ be defined by
\[
\Psi_{\B{\delta}}(\varphi) = d\varphi(F_0),
\]
where $F_0$ consists of the standard symplectic frames at the center of each ball $B(\delta_i)$, for $i=1,\ldots,n$.

The set of admissible capacities is a poset with respect to the relation $\B{\delta}\leq \B{\delta'}$ given by $\delta_i\leq \delta'_i$ for all $i=1,\ldots,n$. This implies that the spaces of embeddings $\Emb(\B{\delta},\cp^2)$ form a directed system with respect to the maps ${\rm res}_{\B{\delta'},\B{\delta}}\colon \Emb(\B{\delta'},\cp^2)\to \Emb(\B{\delta},\cp^2)$ induced by restrictions of embeddings if $\B{\delta}\leq \B{\delta'}$. We have $\Psi_{\B{\delta'}} = \Psi_{\B{\delta}}\circ {\rm res}_{\B{\delta'},\B{\delta}}$ and hence there is a well-defined map
\[
\lim_{\longrightarrow} \Emb(\B{\delta},\cp^2) \to {\rm SpConf}(n,\cp^2),
\]
which is a weak homotopy equivalence, according to \cite[Lemma 2.2]{AKP2024}.

When restricted to $\Delta_0$, stability implies that the directed system $ \Emb(\B{\delta},\cp^2)$ is constant. Hence we get that
\[
\Psi_{\B{\delta}}\colon \Emb(\B{\delta},\cp^2)\to {\rm SpConf}(n,\cp^2)
\]
is a homotopy equivalence for all $\B{\delta}\in \Delta_0$. Consider the following diagram of fibrations.
\[
\begin{tikzcd}
\Symp(B^4(\B{\delta}),0) \ar[d]\ar[r] & \Sp(4;\R)^n\ar[d]\\
\Emb(\B{\delta},\cp^2)\ar[d]\ar[r,"\Psi_{\B{\delta}}"]        & {\rm SpConf}(n,\cp^2)\ar[d]\\
\IEmb^*(\B{\delta},\cp^2) \ar[r,"{\rm center}"]         & {\rm Conf}(n,\cp^2)
\end{tikzcd}
\]
where $\Symp(B^4(\B{\delta}),0)$ is the subgroup of $\Symp(B^4(\B{\delta}))$ fixing the origins $0_i \in B^4(\delta_i)$. We have the following sequence of maps
\[
\Sp(4;\R)^n  \to {\rm U}(2)^n  \subset \Symp(B^4(\B{\delta}),0)  \subset \Symp (B^4(\B{\delta})),
\]
where the first map is a deformation retraction. Since all maps are homotopy equivalences (see \cite[Section 2.2]{AKP2024} for the last inclusion), we obtain that 
\[
{\rm center}\colon \IEmb^*(\B{\delta},\cp^2)\to {\rm Conf}(n,\cp^2)
\]
is a homotopy equivalence. Furthermore, the projection 
$\IEmb^*(\B{\delta},\cp^2)\to \F \IEmb(\B{\delta},\cp^2)$ is a homotopy equivalence, according to \cite[Section 2.2]{AKP2024}, which finishes the proof of Theorem~\ref{T:main}.
\end{proof}

\section{Wall crossing}\label{S:wall-crossing}
Recall that  $\Omega(\B\delta, \rel \Sigma)$ is  the space of symplectic forms on  $\widetilde{M}_n$ in the class $[\tom_{\B\delta}]$ which are equal to the form $\tom_{\B\delta}$ on some neighborhood  of $\Sigma$, and $\mathcal{A}(\B\delta, \rel \Sigma)$ is the space of almost complex structures on  $\widetilde{M}_n$ tamed by a form in $\Omega(\B\delta, \rel \Sigma)$ and which are equal to the standard complex structure $\tilde{J}_0$ on some neighborhood of $\Sigma$. We pass to spaces of structures standard near $\Sigma$ as they are more natural for the study of restriction maps, see Remark~\ref{remark:note on spaces A and inflation} below.

\begin{lemma}\label{L:strata}
Suppose that $\B\delta'>\B\delta$ are two sets of capacities in $\cC_n$ separated by exactly one wall defined by a homology class $A\in \cS(M,\om)^{\leq -1}$. Then,
\[
\mathcal{A}(\B\delta, \rel \Sigma) = \mathcal{A}(\B\delta', \rel \Sigma)\sqcup \cA_k,
\]
where $\cA_k\subseteq \mathcal{A}(\B\delta, \rel \Sigma)$ consists of those almost complex structures $J$ for which the class $A$ is represented by a $J$-holomorphic sphere. Moreover, $\cA_k$ is a Fr\'echet submanifold
of codimension
\begin{equation*}
\label{Eq:codimension}
k = 2 - 2c_1(A).
\end{equation*}
\end{lemma}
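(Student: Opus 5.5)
We prove the two inclusions that give the decomposition separately, then compute the codimension from the deformation theory of $J$-holomorphic spheres.

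\emph{Orientation of the wall.} Since $\B\delta'>\B\delta$ lie on opposite sides of the single wall $H_A$, the functional $\ell_A$ changes sign between them. We first argue that $\omega_{\B\delta}(A)>0$ and $\omega_{\B\delta'}(A)<0$. Indeed, by definition $A$ is represented by an embedded $J$-sphere for some admissible $J$, hence has positive area for some class on its side of the wall. Moreover, increasing capacities only decreases the areas of classes of the form $aL-\sum b_iE_i$ with $b_i\ge 0$. The negative-square classes meeting $\cC_n$ in a wall have this form; for $E_i$ itself the wall does not meet $\cC_n$.

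\emph{The inclusion $\cA(\B\delta',\rel\Sigma)\subset\cA(\B\delta,\rel\Sigma)$.} This is the relative analogue of Lemma~\ref{lemma:decreasing capacities}. To keep $J$ standard near $\Sigma$, we inflate in a small neighborhood of the $E_i$-curves. The inflation forms are supported near the curves and are standard in the model, so the resulting form still lies in $\Omega(\B\delta,\rel\Sigma)$ after a Moser adjustment.

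\emph{No $A$-curves for $\B\delta'$.} If $J\in\cA(\B\delta',\rel\Sigma)$, then $J$ admits no $J$-holomorphic representative of $A$, since such a curve would have positive $\omega_{\B\delta'}$-area, contradicting $\omega_{\B\delta'}(A)<0$. So $\cA(\B\delta',\rel\Sigma)\cap\cA_k=\emptyset$.

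\emph{The reverse inclusion on the complement of $\cA_k$.} Let $J\in\cA(\B\delta,\rel\Sigma)\setminus\cA_k$. By Theorem~\ref{thm:Nakai-Moishezon duality} and Corollary~\ref{cor:criterion for inclusion}, it suffices to check that $[\omega_{\B\delta'}]$ is positive on every class of $\cS(J)^{\le -1}$. Every such class $B$ differs from $A$. Because exactly one wall separates $\B\delta$ from $\B\delta'$, the segment between them crosses no wall $H_B$, so the sign of $\omega(B)$ is constant along it and stays positive. Here we use that $\cS(J)^{\le-1}\subset\cS(M,\om)^{\le-1}$. This shows $J\in\cA(\B\delta',\rel\Sigma)$, which gives the disjoint decomposition.

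\emph{Codimension.} We follow the standard Abreu/McDuff argument. Consider the universal moduli space $\mathcal M(A)$ of pairs $(u,J)$, where $u$ is an embedded $J$-sphere in class $A$ and $J$ ranges over $\cA(\B\delta,\rel\Sigma)$. Embeddedness follows from the adjunction formula for a simple curve with $A\cdot A<0$ and $A\in\cS$. We need transversality of the universal $\bar\partial$-operator, and this is the main obstacle, since $J$ is constrained to equal $\tilde J_0$ near $\Sigma$.

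Because $A\ne E_i$ and $A$ is not contained in $\Sigma$, each curve has a nonempty open set of injective points away from $\Sigma$, and there the perturbations of $J$ are unconstrained, so the universal space is a Fr\'echet manifold. Projecting to the space of almost complex structures is Fredholm with index $2c_1(A)-2$, after dividing by reparametrizations: the real index is $4+2c_1(A)$ minus $6$.

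Uniqueness of a negative-square curve in class $A$ makes the projection injective. Its image $\cA_k$ then has normal space identified with $\operatorname{coker}D_u$, of dimension $2-2c_1(A)$; here $\ker D_u$ is only reparametrizations, by automatic regularity in dimension four (Hofer--Lizan--Sikorav). This gives $k=2-2c_1(A)$. Closedness of $\cA_k$ in $\cA(\B\delta,\rel\Sigma)$ follows from the decomposition, since $\cA(\B\delta',\rel\Sigma)$ is open.
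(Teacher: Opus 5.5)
Your proposal is correct and follows the paper's own route. The paper likewise rests the decomposition on the observation that $A$ is the only class of $\cS(M,\om)^{\leq -1}$ whose area changes sign across the wall, so that the inflation criterion of Corollary~\ref{cor:criterion for inclusion} applies, and it defers the codimension count $2-2c_1(A)$ to the Fredholm/automatic-regularity analysis in Sections 4.2 and 5.3 of \cite{Anjos-al-Stability}, which you have written out explicitly. One cosmetic point: drop the ``Moser adjustment'' in the decreasing-capacity inclusion, since a Moser isotopy would replace $J$ by a pullback; the Thom form can be chosen so that the inflated form is already exactly standard near $\Sigma$, as in the Appendix.
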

\begin{proof}
These results are established in Sections 4.2 and 5.3 of \cite{Anjos-al-Stability}. In their notation, the class $u$ of the symplectic form corresponds to $\B\delta$ (or to $\B{\delta'}$) in our notation. The key observation is that, under the conditions given in the statement, the only negative class in $\cS(M,\om)^{\leq -1}$ whose symplectic area changes sign after crossing the wall is $A$. It follows that $A$ is the only negative class which is represented by embedded $J$-holomorphic spheres for some $J\in \cA(\B\delta,\rel\Sigma)$, but which is not represented by such spheres for any $J\in \cA(\B{\delta'},\rel\Sigma)$. See the proof of~\cite[Theorem 5.1]{Anjos-al-Stability} for a complete discussion in a similar situation.
\end{proof}

In what follows, we use the relation between symplectic blow-ups and embeddings discussed in Appendix~\ref{Appendix}.

Let $\B\delta'>\B\delta$ and let $\Emb(\B\delta',\cp^2)\to \Emb(\B\delta,\cp^2)$ be the map defined by
the restriction of an embedding to smaller balls. By considering the mapping cylinder we get a pair
$(\Emb(\B\delta,\cp^2),\Emb(\B{\delta'},\cp^2))$ of spaces. The following lemma provides a condition
for non-triviality of the corresponding relative homotopy groups.

\begin{lemma}\label{L:eells}
If the codimension $k$ of $\cA_k\subseteq \mathcal{A}(\B\delta, \rel \Sigma)$ is positive then
there exists a non-trivial element in $\pi_k({\Emb}(\B\delta,\cp^2),{\Emb}(\B{\delta'},\cp^2))$.
\end{lemma}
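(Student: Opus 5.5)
The plan has two halves. First we produce a nontrivial element in $\pi_k\big(\cA(\B\delta,\rel\Sigma),\cA(\B{\delta'},\rel\Sigma)\big)$ from the stratification of Lemma~\ref{L:strata}. Then we transport it to the embedding pair using the comparison of fibrations \eqref{Eq:diagram-Embs} and \eqref{Eq:diagram-ACS} established in the Appendix.

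\emph{Step 1 (a normal sphere).} By Lemma~\ref{L:strata}, $\cA_k$ is a closed, co-oriented Fréchet submanifold of codimension $k\geq 1$ in $\cA(\B\delta,\rel\Sigma)$. Its complement is exactly $\cA(\B{\delta'},\rel\Sigma)$. For $k=2-2c_1(A)$, the normal bundle is modelled on the cokernel of the linearized Cauchy--Riemann operator, which carries a complex structure, so it is oriented. Pick $J_0\in\cA_k$ and a small normal disc $D^k$ through $J_0$ meeting $\cA_k$ transversally in the single point $J_0$. Its boundary sphere $S^{k-1}$ lies in $\cA(\B{\delta'},\rel\Sigma)$, so $(D^k,S^{k-1})$ defines a class $\alpha\in\pi_k(\cA(\B\delta,\rel\Sigma),\cA(\B{\delta'},\rel\Sigma))$.

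\emph{Step 2 ($\alpha\neq 0$).} We use intersection numbers, in the spirit of Eells' theory of transversality in Fréchet manifolds. Any map of pairs $f\colon(D^k,S^{k-1})\to(\cA(\B\delta,\rel\Sigma),\cA(\B{\delta'},\rel\Sigma))$ can be perturbed rel boundary to be transverse to $\cA_k$, because $f(S^{k-1})$ already misses $\cA_k$. The signed count of $f^{-1}(\cA_k)$ is then a homotopy invariant of $f$ as a map of pairs. Indeed, a homotopy of pairs $D^k\times I\to\cA(\B\delta,\rel\Sigma)$ keeps $S^{k-1}\times I$ in the complement, and a transverse preimage of $\cA_k$ is a compact $1$-manifold whose boundary lies on $D^k\times\{0,1\}$. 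For $\alpha$ this count is $\pm1$, while the constant class has count $0$; hence $\alpha\neq0$. For $k=1$ the same argument works with $\pi_1$ of the pair read as a pointed set: $\cA_k$ separates, so $\pi_0$ of the complement surjects but is not injective. The needed transversality input (Sard--Smale for the evaluation/Fredholm section cutting out $\cA_k$) comes from the same analysis as in \cite{Anjos-al-Stability}.

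\emph{Step 3 (transport to embeddings).} By the Appendix, the restriction map $\Emb(\B{\delta'},\cp^2)\to\Emb(\B\delta,\cp^2)$ is compatible, up to homotopy, with the inclusion $\cA(\B{\delta'},\rel\Sigma)\subset\cA(\B\delta,\rel\Sigma)$ through the following comparison. Each $\Emb$ is $\Symp_h(\cp^2)/\Stab(\phi)$. In turn, $\Stab(\phi)\simeq\Symp_h(\B\delta,\rel\Sigma)$ is the homotopy fiber of $\Diff_h(\widetilde M_n,\rel\Sigma)\to\cA(\B\delta,\rel\Sigma)$. The group $\Diff_h(\widetilde M_n,\rel\Sigma)$ and $\Symp(\cp^2)$ do not depend on the capacities. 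A five-lemma argument on the long exact sequences of the relevant pairs therefore gives an isomorphism
\[
\pi_k\big(\Emb(\B\delta,\cp^2),\Emb(\B{\delta'},\cp^2)\big)\cong\pi_{k}\big(\cA(\B\delta,\rel\Sigma),\cA(\B{\delta'},\rel\Sigma)\big),
\]
possibly after a degree shift coming from delooping. Chasing the shift carefully through the fibrations is the point to verify. Under this isomorphism $\alpha$ yields the required nontrivial element.

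The main obstacle is Step 3. The restriction maps must commute up to coherent homotopy with the blow-up identifications, and the blow-up is not functorial. I would rely on the Appendix's standard-near-$\Sigma$ model, in which the blow-up form and $\tJ_0$ near $\Sigma$ are fixed, so that shrinking the balls corresponds to enlarging the region where forms are standard. That model is what makes the square of fibrations commute on the nose.
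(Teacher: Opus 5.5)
Your plan is the paper's proof. A small disc transverse to $\cA_k$ gives a class in $\pi_k\bigl(\cA(\B\delta,\rel\Sigma),\cA(\B{\delta'},\rel\Sigma)\bigr)$, detected by its intersection with $\cA_k$, and this class is then transported through the Appendix's comparison of fibrations. For Step~2 the paper cites Eells's Alexander--Pontryagin duality $H^0(\cA_k;\Z)\cong H^k\bigl(\cA(\B\delta,\rel\Sigma),\cA(\B{\delta'},\rel\Sigma);\Z\bigr)$. Its dual class evaluates on relative cycles by exactly your intersection count. Your cobordism argument unpacks the same thing and, as you note, relies on the same Fr\'echet transversality rel boundary. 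Since $k=2-2c_1(A)$ is even, your $k=1$ discussion is vacuous.

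The one thing you leave open, the degree in Step~3, is exactly what the conclusion depends on, so it should be settled rather than hedged. There is no net shift.
\begin{itemize}
\item In \eqref{Eq:homotopy diagram} the middle map $\Diff_h(\B{\delta'},\rel\Sigma)\to\Diff_h(\B\delta,\rel\Sigma)$ is a homotopy equivalence. Hence $\hofib$ of the fibre map is the loop space of $\hofib$ of the base map. The boundary map then gives $\pi_k\bigl(\cA(\B\delta,\rel\Sigma),\cA(\B{\delta'},\rel\Sigma)\bigr)\cong\pi_{k-1}\bigl(\Symp_h(\B\delta,\rel\Sigma),\Symp_h(\B{\delta'},\rel\Sigma)\bigr)$, one degree down.
\item In \eqref{Eq:Embs} the middle map is the identity of $\Symp_h(\cp^2)$. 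The same argument gives $\pi_{k-1}\bigl(\Stab_h(\phi(\B\delta)),\Stab_h(\phi(\B{\delta'}))\bigr)\cong\pi_k\bigl(\Emb(\B\delta,\cp^2),\Emb(\B{\delta'},\cp^2)\bigr)$, one degree up.
\item The bridge is Proposition~\ref{prop:induced map between Symp groups}. It makes $\Symp_h(\B{\delta'},\rel\Sigma)\to\Symp_h(\B\delta,\rel\Sigma)$ compatible, up to homotopy, with $\Stab_h(\phi(\B{\delta'}))\into\Stab_h(\phi(\B\delta))$ via Proposition~\ref{prop:equivalence stab and rel B}.
\end{itemize}
So the tool is this boundary isomorphism for a map of fibrations with equivalent total spaces, not the five lemma. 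The squares also commute only up to homotopy, not on the nose.

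Your heuristic is backwards as well. Shrinking the balls shrinks the region where objects must be standard, since objects standard near $\phi(B^4(\B{\delta'}))$ are in particular standard near $\phi(B^4(\B\delta))$. On a fixed blow-up, the inclusion $\cA(\B{\delta'},\rel\Sigma)\subset\cA(\B\delta,\rel\Sigma)$ comes from the radial rescaling $\Psi$ together with tame-to-tame inflation along $\Sigma$.
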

\begin{proof}
It follows from Eells's version of the Alexander-Pontryagin duality
\cite{Eells} that there is an isomorphism 
\begin{equation}
H^0(\cA_k;\Z) \cong H^k(\mathcal{A}(\B\delta, \rel \Sigma),\mathcal{A}(\B\delta', \rel \Sigma);\Z).
\label{Eq:eells}
\end{equation}
Let $1\in H^0(\cA_k;\Z)$ be a generator corresponding to a connected component of $\cA_k$.
Let $u\in H^k(\mathcal{A}(\B\delta, \rel \Sigma),\mathcal{A}(\B\delta', \rel \Sigma);\Z)$ be the image of  $1$
with respect to the above isomorphism. The evaluation of $u$ on a relative homology class
$a\in H_k(\mathcal{A}(\B\delta, \rel \Sigma),\mathcal{A}(\B\delta', \rel \Sigma);\Z)$ is given by the intersection
of the cycle representing $a$ with the connected component of $\cA_k$; see Eells \cite[Page 113]{Eells}. Let $a\in H_k(\mathcal{A}(\B\delta, \rel \Sigma),\mathcal{A}(\B\delta', \rel \Sigma);\Z)$ be represented by a
small disc transverse to $\cA_k$. Since the intersection is a single point, the
class $a$ is non-trivial, and, because it is represented by a disc, it defines a
non-trivial element in $\pi_k(\mathcal{A}(\B\delta, \rel \Sigma),\mathcal{A}(\B\delta', \rel \Sigma))$.

According to \eqref{homotopy commutative diagram} there is the following homotopy commutative diagram of homotopy fibrations 
\begin{equation}
\begin{tikzcd}
\Symp_h(\B\delta',\rel \Sigma) \ar[r]\ar[d] & \Diff_h({\B{\delta'}}, \rel \Sigma)\ar[r]\ar[d,hookrightarrow,"\simeq"] &  \mathcal{A}(\B{\delta'},\rel \Sigma)\ar[d,hookrightarrow] \\
\Symp_h(\B\delta,\rel \Sigma) \ar[r] & \Diff_h({\B{\delta}}, \rel \Sigma)\ar[r] &  \mathcal{A}(\B{\delta},\rel \Sigma),
\end{tikzcd}
\label{Eq:homotopy diagram}
\end{equation}
where $ \Diff_h({\B{\delta}}, \rel \Sigma)$ is the group of diffeomorphisms of the symplectic blow-up of size $\B\delta$ supported away from $\Sigma$, and $\Symp_h(\B\delta, \rel \Sigma)$ is the subgroup of $\Diff_h({\B{\delta}}, \rel \Sigma)$ consisting of diffeomorphisms preserving $\omega_{\B\delta}$. Then, by considering the corresponding commutative diagram of homotopy groups, the standard diagram chase yields  a nontrivial element 
in 
\[
\pi_{k-1}(\Symp_h(\B\delta,\rel \Sigma),\Symp_h(\B\delta',\rel \Sigma)).
\]
Since $\Symp_h(\cp^2)$ acts transitively on $\Emb(\B\delta,\cp^2)$ we get the following commutative diagram of the corresponding fibrations,
\begin{equation}
\begin{tikzcd}
{\rm Stab}_h(\phi(\B\delta'))\ar[r]\ar[d] & \Symp_h(\cp^2) \ar[r]\ar[d,equal] & {\Emb}(\B\delta',\cp^2)\ar[d]\\
{\rm Stab}_h(\phi(\B\delta))\ar[r] & \Symp_h(\cp^2) \ar[r] & {\Emb}(\B\delta,\cp^2),
\end{tikzcd}
\label{Eq:Embs}
\end{equation}
where $\phi\colon B^4(\B\delta')\to \cp^2$ is a symplectic embedding and the entries of the leftmost column are the isotropy subgroups. Moreover, by Proposition~\ref{prop:equivalence stab and rel B} and the blow-up construction, there is a homotopy equivalence
\[
\Symp_h(\B\delta,\rel \Sigma)\;\simeq\;{\rm Stab}_h(\phi(\B\delta)),
\]
and similarly for \(\B\delta'\). In Proposition \ref{prop:induced map between Symp groups}, it is shown that 
\[
\Symp_h(\B\delta',\rel \Sigma)\longrightarrow \Symp_h(\B\delta,\rel \Sigma),
\]
induced by the inclusion 
$\mathcal{A}(\B\delta',\rel \Sigma)\hookrightarrow \mathcal{A}(\B\delta,\rel \Sigma)$
in diagram~\eqref{Eq:homotopy diagram}, is compatible with the inclusion of isotropy subgroups
\({\rm Stab}_h(\phi(\B\delta'))\hookrightarrow {\rm Stab}_h(\phi(\B\delta))\).
Hence, we obtain a non-trivial class in
\[
\pi_{k-1}\bigl({\rm Stab}_h(\phi(\B\delta)),{\rm Stab}_h(\phi(\B\delta'))\bigr).
\]
Applying once again a diagram chase to the long exact sequences of homotopy groups associated with
diagram~\eqref{Eq:Embs}, we obtain a non-trivial class in
\[
\pi_k\bigl(\Emb(\B\delta,\cp^2),\Emb(\B\delta',\cp^2)\bigr),
\]
which completes the proof.
\end{proof}

\begin{proposition}\label{prop:maximality of small chamber}
Let $\B{\delta'},\B\delta \in \cC_n$ be such that
$\B{\delta'}> \B\delta$ and suppose $\B\delta$ is separated from
$\B{\delta'}$ by exactly one wall. Then the restriction map 
$\Emb(\B{\delta'},\cp^2)\to \Emb(\B\delta,\cp^2)$ is not a homotopy equivalence. In particular, if $\B{\delta_0} \in \Delta_0$ and $ \B\delta \in \cC_n$ satisfy $\B{\delta} > \B{\delta_0}$ and $\B\delta$ is separated from
$\B{\delta_0}$ by exactly one wall, then the map $\Emb(\B{\delta},\cp^2)\to \Emb(\B{\delta_0},\cp^2)$ is not 
a homotopy equivalence.
\end{proposition}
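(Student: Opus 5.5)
The plan is to reduce the statement to Lemma~\ref{L:strata} and Lemma~\ref{L:eells}. Let $A\in\cS(M,\om)^{\leq -1}$ be the class defining the unique wall $H_A$ separating $\B\delta$ from $\B{\delta'}$. Lemma~\ref{L:strata} gives the decomposition
\[
\cA(\B\delta,\rel\Sigma)=\cA(\B{\delta'},\rel\Sigma)\sqcup\cA_k,\qquad k=2-2c_1(A).
\]
If $k\geq 1$, Lemma~\ref{L:eells} yields a nontrivial element of $\pi_k\bigl(\Emb(\B\delta,\cp^2),\Emb(\B{\delta'},\cp^2)\bigr)$, the relative group of the mapping cylinder pair. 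A homotopy equivalence would make all these relative groups vanish, by the long exact sequence of the pair together with the five lemma applied at every basepoint; the embedding spaces are connected, so one basepoint suffices. Hence the restriction map is not a homotopy equivalence.

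The substantive step is showing that $k>0$, that is, $c_1(A)<1$, for every class that can actually define a wall. Since $A$ is represented by an embedded $J$-holomorphic sphere, adjunction gives $A\cdot A=c_1(A)-2$. The class $A$ has negative self-intersection, so $c_1(A)\leq 1$, with equality exactly when $A$ is a $(-1)$-class. In that case the codimension is $0$, and the argument breaks down. I would exclude this case as follows.
\begin{itemize}
\item Remark~\ref{remark:Relations to LLW} says that walls given by $(-1)$-classes are exterior walls of $\cC_n$.
\item Any $(-1)$-class $E$ is represented by $J$-holomorphic curves for every tamed $J$. By Theorem~\ref{thm:Nakai-Moishezon duality}, its area must therefore be positive on all of $\cC_n$.
\item Consequently $H_E$ does not meet the interior of $\cC_n$.
\end{itemize}
Since $\B\delta$ and $\B{\delta'}$ are interior points of the convex set $\cC_n$, the segment joining them stays in $\cC_n$, so it cannot cross a $(-1)$-wall. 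It follows that $A\cdot A\leq -2$, $c_1(A)\leq 0$ and $k\geq 2$.

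The second assertion of the proposition is the special case $\B\delta_0\in\Delta_0$, with the roles relabelled: apply the first part to the pair $\B\delta>\B\delta_0$.

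I expect the main obstacle to be making the exclusion of $(-1)$-walls rigorous. It relies on the GW-invariance of $(-1)$-classes and on the statement that the admissible region is exactly the set of classes that are positive on exceptional classes. I would cite the description of $\cC_n$ as the $K$-symplectic cone and use the fact that the classes $E_i$ lie in $\Sigma$.
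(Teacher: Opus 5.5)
Your proposal is correct and follows the paper's proof, which simply combines Lemma~\ref{L:strata} (the splitting $\cA(\B\delta,\rel\Sigma)=\cA(\B{\delta'},\rel\Sigma)\sqcup\cA_k$ with $k=2-2c_1(A)$) with Lemma~\ref{L:eells}. Your extra check that the separating wall cannot come from a $(-1)$-class, so that $k\geq 2$, makes explicit a point the paper leaves implicit. The cleanest justification is that exceptional classes have nonzero Gromov invariant and hence positive $\om_{\B\delta}$-area for every $\B\delta\in\cC_n$, so their walls miss $\cC_n$ entirely. This route needs neither Theorem~\ref{thm:Nakai-Moishezon duality}, which only covers at most $9$ blow-ups, nor convexity of $\cC_n$.
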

\begin{proof}
By Lemma~\ref{L:strata} we have $\mathcal{A}(\B{\delta},\rel \Sigma) = \mathcal{A}(\B{\delta'},\rel \Sigma)\sqcup \cA_k$, where \(\cA_k\) denotes the stratum consisting of almost complex structures for which the wall class \(A\) is represented by a \(J\)-holomorphic sphere. Moreover, this stratum has codimension $k= 2 -2c_1(A)$. It then follows from Lemma~\ref{L:eells} that
\[
\pi_{2-2c_1(A)}
\bigl(\Emb(\B{\delta},\cp^2),\Emb(\B{\delta'},\cp^2)\bigr)\neq 0,
\]
and hence the restriction map
\[
\Emb(\B{\delta'},\cp^2)\longrightarrow \Emb(\B{\delta},\cp^2)
\]
is not a homotopy equivalence. In particular, if the unique wall separating  $\B{\delta_0}$ and $\B{\delta}$ is the wall defined by \(L-(E_1+\cdots+E_n)\), then \(k=2n-4\). 
\end{proof}

\begin{remark}\label{remark:note on spaces A and inflation}
There are two main reasons why it is more convenient to work with the spaces \(\mathcal{A}(\B\delta,\rel \Sigma)\) rather than with the spaces
\(\cA(\B\delta, \Sigma)\) used in the proof of Theorem~\ref{T:main}.
On the one hand, the stabilizers are exactly \(\Symp_h(\B\delta,\rel \Sigma)\), which leads to a simpler analysis of the induced maps between stabilizers and of the homotopy commutativity of the diagram associated with restriction to smaller capacities. On the other hand, the \(J\)-tame inflation process along \(\Sigma\), used to establish the inclusions between spaces of almost complex structures, can be carried out in such a way that the property of \(J\) being standard near \(\Sigma\) is preserved. Both features are essential for our proof of homotopy commutativity that uses the definition of symplectic blow-up as a surgery (see Proposition~\ref{prop:induced map between Symp groups}).

Note that this setting is well suited only for restriction arguments. If one instead wishes to increase the capacities, as in Theorem~\ref{T:main}, one must perform inflation along curves that intersect \(\Sigma\) positively. This process cannot be carried out while preserving the condition that the almost complex structure is standard near \(\Sigma\).\eoe
\end{remark}

\section{Infinitely many \texorpdfstring{$(-2)$}{-2} walls for 
\texorpdfstring{$n= 9$}{n =9} balls} \label{S:1/3}

Note that the Light Cone Lemma \cite[Lemma 2.6]{LiLiu-Adjunction} implies that homology classes $A \in H_2(M;\Z)$ represented by $J$-holomorphic curves, and which may define walls in the set of admissible capacities, must have negative self-intersection. Therefore, the first step is to identify such classes.  

Let $J$ be a generic $\omega_{\B{\delta}}$-tame almost complex structure on $M_{\B{\delta}}$ for which every exceptional class $E_i$, for $i=1,\ldots,9$, is represented by an embedded $J$-holomorphic curve. The existence of such a $J$ follows from the blow-up construction. 

Recall that if $A \in H_2(M;\Z)$ is a homology class such that
\[
k(A) = \frac{1}{2}\big(A \cdot A + c_1(A)\big) \geq 0,
\]
then the Gromov invariant of $A$, defined by Taubes in \cite{Taubes-Counting}, counts, for a generic almost complex structure $J$ tamed by $\omega$, the algebraic number of embedded $J$-holomorphic curves in class $A$ passing through $k(A)$ generic points. Taubes’s curves arise as zero sets of sections and hence need not be connected. Moreover, components of non-negative self-intersection are embedded holomorphic curves of some genus $g \geq 0$, while all other components are exceptional spheres. 

It follows from Gromov’s Compactness Theorem~\cite{McD-S-JHolomorphicCurves} that, given a tamed $J$, any class $A$ with non-zero Gromov invariant $\rm Gr(A)$ is represented by a collection of $J$-holomorphic curves or cusp curves. For a rational surface with canonical class $K_n = c_1(T^*M_{\B\delta})$, since $b_2^+ = 1$, it follows from the wall-crossing formula of Li and Liu for Seiberg–Witten invariants \cite[Corollary 1.4]{LiLiu-WallCrossing} that for every class $A$ satisfying $k(A) \geq 0$ we have
\[
\left|{\rm Gr}(A) - {\rm Gr}(K_n - A)\right| = 1.
\]
In our case, consider the class $D_9 = 3L - (E_1 + \cdots + E_9)$. Then
\[
k(D_9) = \tfrac{1}{2}\big(c_1(D_9) + D_9 \cdot D_9\big) = D_9 \cdot D_9 = 0,
\]
and it follows that
\[
\big|{\rm Gr}(D_9) - {\rm Gr}(K_9 - D_9)\big| = 1.
\]
The symplectic area of $D_9$ is given by
\begin{equation}
\omega_{\B{\delta}}(D_9) = 3 - \sum_{i=1}^9 \delta_i.
\label{Eq:omegaD9}
\end{equation}
If the sum of the capacities is smaller than $3$ then $\omega_{\mathbf{\delta}}(D_9)>0$. As $K_9 = -D_9$, we have $\omega_{\B\delta}(K_9 - D_9) = -2\omega_{\B\delta}(D_9) < 0$, which implies that $\mathrm{Gr}(K_9 - D_9) = 0$ and therefore $\mathrm{Gr}(D_9) = \pm 1$. Hence, the class $D_9$ is represented by either an embedded torus, a sphere with a simple double point or an ordinary cusp, or a reducible cusp-curve. Since $\mathrm{Gr}(D_9)$ depends only on the deformation class of the symplectic form, it follows that $\mathrm{Gr}(D_9) = \pm 1$ for all symplectic forms $\omega_{\mathbf{\delta}}$.

Let $A=a_0L-(a_1E_1+\cdots+a_9E_9)\in H_2(M;\Z)$ be represented by a 
$J$-holomorphic curve $u\colon \Sigma\to M$. Since each $E_i$ is represented
by a simple $J$-curve, it follows from positivity of intersections that
\begin{equation}
0\leq E_i\cdot A = a_i, 
\label{Eq:ai>0}
\end{equation}
for $i=1,\ldots,9$.
Moreover, by positivity of symplectic area, for some $\B{\delta}$ we have
\[
0< \omega_{\B{\delta}}(A) = a_0 - \sum_{i=1}^9 \delta_ia_i,
\]
which implies $a_0>0$. Furthermore, observe that
\[
c_1(A)=A\cdot D_9 = 3a_0 - \sum_{i=1}^9 a_i
\]
which implies
\begin{align}
3a_0 &= c_1(A) + \sum_{i=1}^9 a_i.
\label{}
\end{align}
The positivity of intersection implies that either $c_1(A)=A\cdot D_9\geq 0$
or $c_1(A)=A\cdot D_9<0$ and $A$ is represented by a $J$-holomorphic curve
$u\colon \Sigma\to M$ which is a component of the cusp-curve representing
$D_9$. Both cases break into several subcases, which are described in the following lemma. 

\begin{lemma}\label{lemma:NegativeClasses_9balls}
Let $J$ be an almost complex structure on $M_9$ which is tamed by some symplectic form $\omega_{\B{\delta}}$ and for which each of the exceptional classes $E_1,\ldots,E_9$ is represented by an embedded $J$-holomorphic sphere. Suppose $A \in H_2(M_9, \Z)$ is represented by a $J$-holomorphic curve with negative self-intersection $A\cdot A < 0$. Then $A$ is of one of the following forms. 
\begin{enumerate}
\item If $c_1(A)< 0$ then 
\begin{enumerate}
\item $A = L - \sum_{i\in I} E_i$, with $|I|\geq 4$;
\item $A = 2L - \sum_{i\in I} E_i$, with $|I|\geq 7$; 
\item $A = 3L - 2E_i - \sum_{j\neq i} E_j$.
\end{enumerate}
\item If $c_1(A)=0$ then
\begin{enumerate}
\item $A = L-E_i-E_j-E_k + mD_9$ for pairwise distinct $i,j,k$ and $m\geq 0$;
\item $A = -(L-E_i-E_j-E_k) + mD_9$ for pairwise distinct $i,j,k$ and $m\geq 1$;
\item $A = \pm(E_i-E_j) + mD_9$ for $m\geq 1$;
\end{enumerate}
\item If $c_1(A)>0$ then $A$ is an exceptional class. 
\end{enumerate}
\end{lemma}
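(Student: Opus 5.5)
The plan is to combine three constraints on $A=a_0L-\sum_i a_iE_i$: positivity of intersections with the $E_i$ (giving \eqref{Eq:ai>0} when $A\neq E_i$), $a_0>0$ from positivity of area, and the adjunction inequality. For a simple $J$-holomorphic curve the virtual genus $g_v(A)=\tfrac12(A\cdot A-c_1(A))+1$ is $\geq 0$, so
\[
A\cdot A\;\geq\; c_1(A)-2 ,
\]
with equality iff the curve is an embedded sphere. We may assume $A$ is simple, replacing a multiply covered curve by its underlying simple curve; a multiple $dB$ with $d\geq 2$ has $c_1(A)=0$ only when $c_1(B)=0$, and those classes are handled separately in case (2) anyway. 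Throughout, $c_1(A)=A\cdot D_9=3a_0-\sum_i a_i$. The proof then splits according to the sign of $c_1(A)$, exactly as in the statement.

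\emph{Case $c_1(A)>0$.} Combining $c_1(A)-2\leq A\cdot A\leq -1$ forces $c_1(A)=1$ and $A\cdot A=-1$, with equality in adjunction. Hence $A$ is an embedded sphere of self-intersection $-1$, i.e.\ an exceptional class. This case is immediate.

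\emph{Case $c_1(A)<0$.} By positivity of intersections, $A$ must be a component of the cusp-curve representing $D_9$; this curve exists because $\mathrm{Gr}(D_9)=\pm1$. Hence $D_9-A$ is a nonnegative combination of classes of $J$-curves. Every such curve has nonnegative $L$-coefficient (the $E_i$ have coefficient $0$, the others have positive area and hence $a_0>0$), which gives $1\leq a_0\leq 3$. Next insert $A\cdot A=a_0^2-\sum a_i^2$ and $c_1(A)=3a_0-\sum a_i$ into the adjunction inequality to obtain
\[
\sum_i a_i(a_i-1)\;\leq\;(a_0-1)(a_0-2).
\]
Since $a_i\geq 0$, each $a_i(a_i-1)\geq 0$. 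For $a_0\in\{1,2\}$ the right side vanishes, so every $a_i\in\{0,1\}$; then $c_1(A)<0$ reads $|I|=\sum a_i>3a_0$, giving $|I|\geq 4$ or $|I|\geq 7$ respectively. For $a_0=3$ the right side is $2$, so at most one $a_i$ equals $2$ and the rest lie in $\{0,1\}$. The bound $\sum_i a_i>9$ over nine indices then forces exactly one $a_i=2$ and all others equal to $1$. This gives (1)(a)--(c).

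\emph{Case $c_1(A)=0$, the main obstacle.} Adjunction gives $A\cdot A\geq -2$, and parity ($A\cdot A+c_1(A)$ is even) gives $A\cdot A=-2$ with $A$ an embedded sphere. So $A$ is a root in the lattice $K_9^\perp=D_9^\perp\subset H_2(M_9;\Z)$. I would use the standard fact that $D_9^\perp$ is degenerate with radical $\Z D_9$ and that $D_9^\perp/\Z D_9$ is the negative definite $E_8$ lattice. Its $240$ roots are represented by $\pm(E_i-E_j)$, $\pm(L-E_i-E_j-E_k)$, $\pm(2L-\sum_{6}E)$ and $\pm(3L-2E_i-\sum_{7}E)$ restricted to $E_1,\dots,E_8$. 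Using the relation $D_9-(L-E_a-E_b-E_c)=2L-\sum_{\ell\neq a,b,c}E_\ell$, and its analogue for the cubic classes, every lift can be rewritten as $\pm(L-E_i-E_j-E_k)+mD_9$ or $\pm(E_i-E_j)+mD_9$ with $m\in\Z$; the lift count must be checked carefully here. Finally I impose $a_0>0$ and $a_\ell\geq 0$ to restrict $m$:
\begin{itemize}[label={}]
\item $L-E_i-E_j-E_k+mD_9$ requires $m\geq 0$;
\item $-(L-E_i-E_j-E_k)+mD_9$ requires $m\geq 1$;
\item $\pm(E_i-E_j)+mD_9$ requires $m\geq 1$, since for $m=0$ some $E$-coefficient has the wrong sign and $a_0=0$.
\end{itemize}
This yields (2)(a)--(c). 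I expect the root-system identification and the bookkeeping of lifts modulo $D_9$ to be the only delicate step; the other cases are direct inequality manipulations.
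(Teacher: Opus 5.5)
Your proposal is correct and follows essentially the paper's argument: the same split on the sign of $c_1(A)=A\cdot D_9$, the adjunction inequality, the cusp-curve representative of $D_9$ giving $1\le a_0\le 3$ when $c_1(A)<0$, and, when $c_1(A)=0$, the root structure of $D_9^\perp/\Z D_9\cong E_8$ (which the paper cites from Demazure) followed by positivity of intersections with the $E_i$ to bound $m$. You also supply details the paper omits, namely the unified inequality $\sum_i a_i(a_i-1)\le (a_0-1)(a_0-2)$ settling (1.b)--(1.c) and the parity argument excluding $A\cdot A=-1$; like the paper, however, your argument really concerns simple curves, so drop the remark that multiples are ``handled in case (2)'' (e.g.\ $2(L-E_1-E_2-E_3)$ has $c_1=0$ but square $-8$, so it is not on that list).
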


\begin{proof}
\begin{enumerate}
\item If  $c_1(A) = A\cdot D_9 < 0$ then the $J$-curve $u\colon \Sigma\to M_{\B{\delta}}$ representing $A$ is a multiple of a simple component of a cusp-curve 
\[
C = m_1C_1+\cdots + m_kC_k
\]
representing $D_9$, where the components $C_i$ are simple $J$-holomorphic curves and where $m_i>0$. Since $D_9 = 3L - (E_1+\cdots+E_9)$ it follows that $A$ is of the form $A= a_0L - \sum_{i=1}^9 a_i E_i$, where $a_0=1,2$ or $3$. If $a_0=1$, then the adjunction inequality implies that 
\[
1 + \frac12 (A \cdot A -c_1(A) )
= 1 + \frac12 \left(1 - \sum_{i=1}^9a_i^2  - 3 + \sum_{i=1}^9 a_i\right) \geq 0.
\] 
Hence 
\[
\sum_{i=1}^9a_i(1-a_i) \geq 0,
\]
which implies that $a_i \in \{0,1 \}$. Since we have $c_1(A)= 3 - \sum_{i=1}^9 a_i < 0$, it follows that 
\[
A = L - \sum_{i\in I} E_i,
\]
where $|I|\geq 4$. This proves case (1.a). Cases (1.b) and (1.c) are obtained in a similar way.
\item If $c_1(A)=0$, then the adjunction inequality implies that $A\cdot A=-2$, since the case $A\cdot A=-1$ would make $A$ an exceptional class, which does not define an interior wall.  Classes satisfying  $A\cdot A=-2$ are called  root classes. Since $n=9$, it follows from \cite[Remark 1]{Demazure-PointsCP^2} that $A$ is a sum of simple roots, each of which is of the form $\pm(L-E_i-E_j-E_k)$ or $\pm(E_i-E_j)$, plus a multiple of $D_9$.  This proves (2). Note that in cases (2.b)  and (2.c) the integer $m\in \Z$ is positive by positivity of intersections, as $-(L-E_i-E_j-E_k)\cdot E_i=-1$ and $(E_i-E_j)\cdot E_i=-1$.
\item If $ c_1(A) >0$ and $A\cdot A<0$, then the adjunction formula implies that $c_1(A)=1$. More precisely, if $c_1(A)\geq 2$ then the adjunction formula implies that $A\cdot A\geq 0$. Moreover, if $c_1(A)=1$ then the adjunction formula gives 
\[
A\cdot A \geq  2 g(\Sigma) -1 \geq -1,
\]
because $ g(\Sigma)\geq 0$. Hence $A\cdot A=-1$ and $A$ is an embedded symplectic exceptional class.
\end{enumerate}
\end{proof}

\begin{remark}
The enumeration of classes in Lemma~\ref{lemma:NegativeClasses_9balls} is
closely related to sets of homology classes arising in several classical
problems. Restricting to classes of self-intersection $A^2 \geq -2$ recovers
the root classes of the lattice $K_9^\perp \subset \operatorname{Pic}(\widetilde{M}_9)$. For $n \leq 8$ these are the $(-2)$-classes whose effectivity governs the degenerations of weak del~Pezzo surfaces, classified in the language of the Kantor--Cremona group by Coble~\cite{Coble} and Du~Val~\cite{DuVal}; see also~\cite[Ch.~8]{DolgachevCAG} for a modern treatment and \cite{Demazure-PointsCP^2} for the del~Pezzo formalism. For $n\leq 10$, the purely linear part of this combinatorics (i.e. the collinearity relations among the points) is governed by the rank-$3$ matroid of linear dependences of the configuration; for an explicit treatment of the corresponding incidence strata see~\cite{IKKLPW}. However, the full list of classes relevant to the embedding problem treated here (which includes classes of self-intersection below $-2$) does not seem to appear in any classical reference.\eoe
\end{remark}

We know from Lemma~\ref{lemma:NegativeClasses_9balls} that the only family of classes that may define infinitely many walls intersecting the interior of the set $\cC_9$ of admissible capacities are the following $(-2)$-classes:
\begin{itemize}
\item $A_m(+,i,j,k) := L-E_i-E_j-E_k + mD_9$ for pairwise distinct $i,j,k$ and $m\geq 0$,
\item $A_m(-,i,j,k) := -(L-E_i-E_j-E_k) + mD_9$ for pairwise distinct $i,j,k$ and $m\geq 1$, 
\item $A_m(\pm,i,j) := \pm(E_i-E_j) + mD_9$ for pairwise distinct $i,j$ and $m\geq 1$.
\end{itemize}
We now briefly recall some facts about these $(-2)$-classes. For more details, the reader can consult~\cite[Section~2.4]{Anjos-al-Stability}.

Given a $(-2)$-class $A$, let $r_A$ denote the reflection along the hyperplane $\ell_A=0$ in $H_2(M_9,\Z)$, that is,
\[
r_A(B) = B -2 \frac{(A\cdot B)}{(A\cdot A)} A =B + (A\cdot B)A.
\]
It is known that the reflections about the classes $A_m$ defined above, together with the ones with $m\leq 0$, generate the group $\Aut_{c_1}$ of linear transformations of the lattice $H_2(M_9,\Z)$ preserving the intersection product and the class $D_9$. It is also known that this group acts on the symplectic cone of $M_9$ and that a fundamental domain for this action is the set of so-called \emph{reduced classes}
\[
C_{\text{red}} := \left\{a_0L-\sum a_iE_i ~|~ a_0\geq a_1+a_2+a_3,~ a_1\geq\cdots\geq a_9>0\right\}.
\]
Let us call this set the reduced chamber. The images of the region $C_{\text{red}}$ under the action of $\Aut_{c_1}$ tessellate the symplectic cone.

Define the \emph{elementary roots} as $r_{ijk}=L-E_i-E_j-E_k$ and $r_{ij}=E_i-E_j$ for distinct indices $i,j,k$. The set $\cR_9$ of all roots is given by 
\[
\cR_9=\{ \pm r_{ijk}+mD_9~|~m\in\Z\} \cup \{ \pm r_{ij}+mD_9~|~m\in\Z\}.
\]
Let $\cR_9^+$ be the set of roots defining $(-2)$-walls in $\cC_9$. By Lemma \ref{lemma:NegativeClasses_9balls}, the elements of   $\cR_9^+$ are precisely the \emph{positive roots} in the sense of~\cite{Looijenga-81}; that is, they are  roots of the form
\begin{align} 
r_{ijk} +mD_9,~&~ m\geq 0,\nonumber\\ 
-r_{ijk}+mD_9,~&~ m\geq 1, \label{Eq:relevant}\\ 
\pm r_{ij}+mD_9,~&~ m\geq 1. \nonumber
\end{align}
The walls of the reduced chamber are defined by the \emph{simple roots}
\begin{align*}
r_0=r_{123}&:=L-E_1-E_2-E_3\\
r_1=r_{12}&:=E_1-E_2\\
&\vdots\\
r_8=r_{89}&:=E_8-E_9
\end{align*}
together with the exceptional class $E_9$. Since 
\[
D_9=3r_0+2r_1+4r_2+6r_3+5r_4+4r_5 + 3r_6+2r_7+r_8
\]
it follows that we can express any positive $(-2)$-class $\alpha$ as a linear combination $\alpha=\sum_i a_i r_i$ of the simple roots in which the coefficient $a_0$ is strictly positive.

The intersection graph of the $(-2)$-walls defining the reduced chamber $C_{\text{red}}$ is the $\widetilde{E_8}$ diagram
\begin{center}
\begin{tikzpicture}[scale=.4]
    \foreach \x in {0,...,7}
    \draw[thick,xshift=\x cm] (\x cm,0) circle (3 mm);
    \foreach \y in {0,...,6}
    \draw[thick,xshift=\y cm] (\y cm,0) ++(.3 cm, 0) -- +(14 mm,0);
    \draw[thick] (4 cm,2 cm) circle (3 mm);
    \draw[thick] (4 cm, 3mm) -- +(0, 1.4 cm);
    
    \foreach \x in {3,...,8}
    \draw[xshift=-2 cm] (2*\x cm, -1) node {$r_{\x}$};
    
    \draw (5,2) node {$r_0$};
    \draw (2,-1) node {$r_2$};
    \draw (0,-1) node {$r_1$};  
\end{tikzpicture}
\end{center}
Any other $(-2)$-chamber is the image of $C_{\text{red}}$ under a Cremona transformation; consequently, its $(-2)$-walls intersect in a similar way. The only positive wall of $C_{\text{red}}$ is the one defined by the simple root $r_0$. In this setting, $C_{\text{red}}$ is a Weyl chamber of the root system $\widetilde{E_8}$.

Note that the region $\Delta_0$ in the statement of Theorem~\ref{T:sequence} imposes no ordering while the reduced chamber $C_{\mathrm{red}}$ constrains only the sizes of the three largest capacities. One has
\[
\Delta_0\cap\{\delta_1\geq\cdots\geq\delta_9\} = C_{\mathrm{red}}\cap\{\delta_1+\cdots+\delta_9<1\}.
\]

\subsection{Proof of Theorem \ref{T:sequence} parts (1) and (2)}

In this proof we use the identification of $\cC_9$ with a subset of $H_2(\widetilde{M}_9;\R)$ via the affine embedding $\B\delta\mapsto {\rm PD}[\omega_{\B\delta}]$.

Starting with $C_0 = \Delta_0$, the goal is to find $C_1, C_2,  \ldots, C_k, \ldots$ as consecutive reflections with respect to $A_1, A_2, \ldots$ where $A_i$ defines a positive $(-2)$ wall of the chamber $r_{A_{i-1}} \cdots r_{A_1}(C_{\text{red}})$. Then, given $\B\delta_0 \in C_0\cap C_{\text{red}}$, we obtain a sequence $\B\delta_k \in C_k$. The existence of such walls follows from the following lemma.

\begin{lemma}
Finding a sequence of positive $(-2)$ walls is equivalent to playing Mozes' Game of Numbers on the affine Dynkin diagram $\widetilde{E_8}$.
\end{lemma}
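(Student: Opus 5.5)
The plan is to set up an explicit dictionary between chambers and states of Mozes' game. Fix a point $\B\delta_0\in C_0\cap C_{\text{red}}$ and represent the class $\alpha={\rm PD}[\omega_{\B\delta}]$ of a point in the current chamber $W$ by its vector of pairings with the simple roots of $W$. For $C_{\text{red}}$ these are $n_i=\alpha\cdot r_i$, $i=0,\ldots,8$. Replacing a chamber $W=w(C_{\text{red}})$, with $w\in\Aut_{c_1}$ a product of reflections, by $r_A(W)$ replaces the simple roots $w(r_i)$ by $r_A w(r_i)$. So at every stage the relevant data is a labelling of the nodes of $\widetilde{E_8}$ by the numbers $n_i=\alpha\cdot w(r_i)$, where $\alpha$ is the class of a point of the chamber we currently sit in. The sign convention should be chosen so that $\omega_{\B\delta}$ is positive on the classes in $\cS(J)^{\leq-1}$; on the relevant side of a wall the number is then negative for the wall we are about to cross. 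This matches Mozes' rule that a move is allowed at a node carrying a negative number.

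Next compute the effect of crossing a single wall. Suppose we cross the wall $H_{w(r_j)}$ from $\alpha$ to $\alpha'=r_{w(r_j)}(\alpha)=\alpha+(\alpha\cdot w(r_j))\,w(r_j)$. Using that reflections preserve the intersection form, we get $\alpha'\cdot r_{w(r_j)}w(r_i)=\alpha\cdot w(r_i)$. Expressed in the old basis, the new labels are $n_j\mapsto -n_j$ and $n_i\mapsto n_i+n_j$ for neighbours $i$ of $j$ (since $r_i\cdot r_j=1$ for adjacent nodes and $r_j^2=-2$), with all other labels unchanged. This is exactly Mozes' firing rule on the graph $\widetilde{E_8}$. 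Conversely, every Mozes move at a node with the appropriate sign gives a reflection across a wall of the current chamber. To make the correspondence precise I would track the chamber $w(C_{\text{red}})$ together with its labelled simple roots, and check by induction that each step stays in $\cC_9$, i.e.\ that $\omega$ remains positive on $E_9$ and on the relevant exceptional classes.

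The main obstacle is the claim that the walls crossed are \emph{positive} roots in $\cR_9^+$, so that they genuinely intersect $\cC_9$ and are among the classes of Lemma~\ref{lemma:NegativeClasses_9balls}, and that there is exactly one such wall between consecutive $\B\delta_k$. For positivity I would use the standard fact from Mozes' game and Looijenga's theory: at each legal move, the root being reflected, expressed in simple roots, has nonnegative coefficients. By the expansion of $D_9$ in the $r_i$ and the discussion above, it is then of the form \eqref{Eq:relevant}, with $a_0>0$ forcing it to be positive. For uniqueness of the wall, the two chambers are adjacent images of the Weyl chamber $C_{\text{red}}$ under the affine Weyl group, so they share a single facet.

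Finally, I would note that for the affine diagram $\widetilde{E_8}$ the game need not terminate, since the null root $D_9$ makes it infinite. This gives the infinite sequence, and the limit statement follows because $\omega(D_9)\to0$ along the orbit.
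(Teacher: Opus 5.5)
There is a genuine gap: your node labels are the wrong quantities. You label node $i$ by $n_i=\alpha\cdot w(r_i)$, with $\alpha$ a point of the current chamber $W=w(C_{\text{red}})$. But $W$ is exactly the region where all these pairings are positive. So every label is positive, and none of them singles out a wall that may be crossed; no global sign convention can make the label negative only on ``the wall we are about to cross''. Worse, your own identity $\alpha'\cdot r_{w(r_j)}w(r_i)=\alpha\cdot w(r_i)$ says the labels do not change at all when the point and the chamber are reflected together. That is precisely how the paper passes from $\B\delta'_k$ to $\B\delta'_{k+1}$. The Mozes rule you obtain ``in the old basis'' concerns pairings of $\alpha'$ with the walls $w(r_i)$ of the \emph{previous} chamber, not with the walls of $r_{w(r_j)}(W)$, so that computation cannot be iterated. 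Whether a wall $w(r_j)$ of $W$ is a positive $(-2)$-wall is a property of the root $w(r_j)$ itself, not of the symplectic area at a point of $W$.

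The missing idea, which is what the paper uses, is to pair the moving roots with something fixed. Label node $i$ by the coefficient $a_0$ of $r_0$ in $w(r_i)$; this equals $L\cdot w(r_i)$, the $L$-coefficient of the class. From the list of roots, a root lies in $\cR_9^+$ if and only if $a_0>0$, so the legal moves are exactly the nodes with positive labels. For $A=w(r_j)$, the formula $r_A(w(r_i))=w(r_i)+(r_j\cdot r_i)A$ sends $A\mapsto -A$, adds $A$ to the neighbours and fixes the other nodes. This is Mozes' rule on the $a_0$-labels, with initial position $(1,0,\ldots,0)$.

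This also repairs your positivity step, which as written is false: nonnegative coefficients in the simple roots do not force $a_0>0$. For $i<j$, $E_i-E_j=r_i+\cdots+r_{j-1}$ has nonnegative coefficients but $a_0=0$, and it is not in $\cR_9^+$. In the $a_0$-labelled game these nodes carry $0$ and cannot be fired. A numbers game started from a strictly dominant position such as $(\alpha_0\cdot r_i)_i$ would allow crossing these ordering walls, so it is not equivalent to walking through positive $(-2)$-walls. Your final paragraph, on non-termination and the limit, belongs to the subsequent proposition rather than to this lemma.
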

\begin{proof}
The mathematical game invented by S. Mozes in \cite{Mozes} assigns real numbers to the nodes of any connected graph. A move consists of selecting a node with a positive number, adding this number to each adjacent node, and then reversing its sign at the selected node. 

Observe that reflecting a $(-2)$-chamber across a positive wall corresponds to a move in Mozes' game. Indeed, since we consider only reflections across positive walls, it suffices to track the coefficient $a_0$ of $r_0$ in the $(-2)$-class associated with each node. By the reflection formula, performing such a reflection adds the coefficient $a_0$ of the positive wall to the adjacent nodes, because the corresponding classes have intersection number $1$. The coefficient of the positive wall itself changes sign, since $r_A(A)=-A$, and all other nodes are left unchanged, since $r_A(B)=B$ whenever $A\cdot B=0$.
\end{proof}

\begin{proposition}
There is a sequence $\{\B\delta_k\} $ in the symplectic cone of $M_9$ such that  $\B\delta_0\in \Delta_0\cap C_{\text{red}}$ and any two consecutive terms $\B\delta_k$ and $\B\delta_{k+1}$ are separated by exactly one $(-2)$-wall $W_A$ such that  $\omega_{\B\delta_k}(A)>0$ and $\omega_{\B\delta_{k+1}}(A)<0$.
Moreover, $\lim_{k\to \infty}\B\delta_k = (1/3,1/3,\ldots,1/3)$.
\end{proposition}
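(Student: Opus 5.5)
We build the sequence by playing Mozes' game on $\widetilde{E_8}$ and then picking points in the resulting chambers. Start with the reduced chamber $C_{\text{red}}$, whose walls are the simple roots $r_0,\dots,r_8$. Initially only $r_0$ is a positive wall, so we label node $r_0$ with coefficient $1$ and all other nodes with $0$. A chamber $W\cdot C_{\text{red}}$, for $W$ in the affine Weyl group, has walls $W(r_i)$. By the preceding lemma, reflecting across a positive wall corresponds to a Mozes move.

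The key structural fact about $\widetilde{E_8}$ is that it is an affine, not finite, Dynkin diagram. By Mozes' theorem (and Eriksson's analysis), the game from a nonzero nonnegative start therefore never terminates. Concretely, after a full round of moves, the coefficient vector of the chamber's walls is shifted by a multiple of $D_9$, which is the imaginary root with coefficients $(3,2,4,6,5,4,3,2,1)$. Iterating gives an infinite sequence of reflections $r_{A_1}, r_{A_2},\dots$, each across a positive root $A_k\in\cR_9^+$ with $a_0>0$. By Lemma~\ref{lemma:NegativeClasses_9balls}, these are precisely $(-2)$-classes that define walls in $\cC_9$.

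Set $C_k=r_{A_k}\cdots r_{A_1}(C_{\text{red}})$ and choose $\B\delta_0\in\Delta_0\cap C_{\text{red}}$ in the interior, say with distinct decreasing entries. The natural candidate is $\B\delta_k=r_{A_k}\cdots r_{A_1}(\B\delta_0)$. Because the classes are $(-2)$-classes orthogonal to $K_9=-D_9$, each reflection preserves $c_1$ and the intersection form. Since $r_A$ fixes $D_9$, it preserves $\omega(D_9)=3-\sum\delta_i$ and the volume $1-\sum\delta_i^2$.

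There is a catch: reflections need not preserve the normalization that $L$ has area $1$. Because the classes here are in $\cC_9$ via the affine map, we must check that the coefficient of $L$ stays $1$. Reflections across roots of the form $r_{ijk}+mD_9$ change the $L$-coefficient. So instead of applying the reflection to $\B\delta_0$ directly, we take $\B\delta_k$ to be the point of $C_k$ on the affine slice. This is possible because chambers are cones and the slice meets each $C_k$, since consecutive chambers share a wall that meets $\cC_9$. We rescale the reflected class $r(\B\delta_0)$ by its $L$-coefficient.

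Separation by exactly one wall comes from the fact that $C_k$ and $C_{k+1}$ are adjacent Weyl chambers sharing the face $W_{A_{k+1}}$. Suitable nearby points on either side therefore differ only in the sign of $\omega(A_{k+1})$, and positivity of $A_{k+1}$ as a wall of $C_k$ gives $\omega_{\B\delta_k}(A_{k+1})>0$.

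For the limit, after $N$ rounds the class becomes $\mathrm{PD}[\omega_{\B\delta_0}]+t_N D_9$ up to a bounded (periodic) correction, with $t_N\to\infty$. This is the standard behaviour of translations in the affine Weyl group, $w(x)=x+(x\cdot\text{stuff})D_9+\dots$. Normalizing the $L$-coefficient to $1$ gives
\[
\frac{L-\sum\delta_iE_i+t_N(3L-\sum E_i)}{1+3t_N}\longrightarrow L-\tfrac13\sum E_i,
\]
so $\B\delta_k\to(1/3,\dots,1/3)$.

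The main obstacle is controlling the bounded correction term, and we would handle it in two steps. First, write the Weyl group element over a full period of moves as a translation $t_\beta$, using Kac's formula $t_\beta(x)=x+(x\cdot D_9)\beta-\big((x\cdot\beta)+\tfrac12\beta^2(x\cdot D_9)\big)D_9$ with $x\cdot D_9=\omega(D_9)>0$ fixed. Iterating gives a $D_9$-coefficient that grows quadratically while the $\beta$-part grows only linearly. The quadratic $D_9$ term then dominates after normalization, which yields the stated limit.
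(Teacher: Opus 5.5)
Your construction of the sequence is the same as the paper's, but your proof of the limit takes a genuinely different route, and as written that step has a gap.

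\textbf{Where you agree with the paper.} You play Mozes' game on $\widetilde{E_8}$ from $(1,0,\ldots,0)$ and use non-termination. The paper gets non-termination from Eriksson's invariant: the pairing with $(3,2,4,6,5,4,3,2,1)$ stays equal to $3$, so a positive node always exists. You then apply $w_k=r_{A_k}\cdots r_{A_1}$ to an interior point $\om_0$ of $\Delta_0\cap C_{\text{red}}$ and divide by the $L$-coefficient. Separation by exactly one wall is the adjacency of $C_k$ and $C_{k+1}=r_{A_{k+1}}(C_k)$. It holds for $\B\delta_k,\B\delta_{k+1}$ themselves, not just for ``suitable nearby points'', because positive rescaling never moves a class across a linear wall.

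\textbf{Where you differ: the limit.} The paper uses its one-step normalized formulas for $[\om_{\B\delta_{k+1}}](E_l)$ in terms of the crossed root $\pm r_{ijk}+mD_9$ or $\pm r_{ij}+mD_9$, together with $m\to\infty$. You use $W(\widetilde{E_8})\cong W(E_8)\ltimes(D_9^\perp/\Z D_9)$ and the transvection formula. Your formula is correct in the paper's conventions: for $\beta=\alpha$ a root it equals $r_{\alpha+D_9}r_\alpha$, and $-\tfrac12\beta^2>0$ because the form is negative definite on $D_9^\perp/\Z D_9$.

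What each route buys: yours gives a uniform quantitative reason for convergence. The one-step formulas only show $\B\delta_{k+1}$ is a convex combination of $\B\delta_k$ and a point near $(1/3,\ldots,1/3)$. So one also needs the accumulated weight, i.e.\ the $L$-coefficient, to go to infinity, and your quadratic growth supplies this. The paper's route is more elementary and stays with the explicit list of positive roots.

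\textbf{The gap.} Your limit step assumes a ``full period of moves'' whose Weyl element is a translation, which you then iterate. Nothing forces the play to be periodic. Repeating a fixed firing word need not stay legal: translating a chamber by $t_\gamma$ shifts the $r_0$-coefficients (equal to the $L$-coefficients) of its walls by node-dependent multiples of $3$, and these can change sign. Also, the correction is not bounded: $(x\cdot D_9)\beta$ grows linearly and is only dominated.

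\textbf{The repair needs no periodicity.}
\begin{enumerate}
\item Write $w_k=t_{\beta_k}u_k$, with $u_k$ in the finite group $W(E_8)$ generated by the reflections in $r_0,\ldots,r_7$, and $\beta_k\in D_9^\perp/\Z D_9$.
\item The $w_k$ are pairwise distinct. At each step the area of $A_{k+1}$ for $w_k\om_0$ is positive, and $A_{k+1}$ has positive $L$-coefficient. Hence the unnormalized $L$-coefficient strictly increases. This is the paper's observation $\B\delta'_{k+1}>\B\delta'_k$, which you omitted; it also gives the positivity of the $L$-coefficient that you justify only vaguely.
\item Since $W(E_8)$ is finite and the $E_8$ lattice is discrete, distinctness forces $|\beta_k|\to\infty$.
\item Apply the transvection formula to the finitely many classes $y=u_k\om_0$, all with $y\cdot D_9=3-\sum_i\delta_{0,i}>0$. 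The $D_9$-coefficient is at least $\kappa|\beta_k|^2-C|\beta_k|$, while everything else is $O(|\beta_k|)$. Normalizing by the $L$-coefficient gives $\B\delta_k\to(1/3,\ldots,1/3)$.
\end{enumerate}
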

\begin{proof}
We begin by constructing an infinite sequence $\{\B{\delta_k'}\}$ in the symplectic cone that crosses only positive walls, starting from the reduced chamber $C_{\text{red}}$. This implies that we always play Mozes' game on a node with a positive integer. The starting graph, as explained above, is:
\begin{equation}\label{fig:first_graph}
\begin{tikzpicture}[scale=.4]
    \foreach \x in {0,...,7}
    \draw[thick,xshift=\x cm] (\x cm,0) circle (3 mm);
    \foreach \y in {0,...,6}
    \draw[thick,xshift=\y cm] (\y cm,0) ++(.3 cm, 0) -- +(14 mm,0);
    \draw[thick] (4 cm,2 cm) circle (3 mm);
    \draw[thick] (4 cm, 3mm) -- +(0, 1.4 cm);
    
    \foreach \x in {3,...,8}
    \draw[xshift=-2 cm] (2*\x cm, -1) node {$0$};
    
    \draw (5,2) node {$1$};
    \draw (2,-1) node {$0$};
    \draw (0,-1) node {$0$};  
\end{tikzpicture}
\end{equation}
In order to guarantee that there always exists a node with a positive integer, we use Lemma 3 from \cite{Eriksson}, which ensures the existence of an invariant of the game. This invariant is given by the standard inner product of the vector $(x_0, x_1, \ldots, x_8)$, representing the node values at any moment of the game, with the vector $(3, 2, 4, 6, 5, 4, 3, 2, 1)$, which corresponds to the particular graph \eqref{fig:looper}. The starting graph \eqref{fig:first_graph} implies that the invariant of the game in our case is 3, which in turn guarantees that there is always at least one node with a positive value. Therefore, we can always perform a reflection across a positive wall. Moreover, condition (iii) on page 165 of \cite{Eriksson} implies that the game is infinite and non-periodic, since the inner product between $(3, 2, 4, 6, 5, 4, 3, 2, 1)$ and the initial position $(1,0,0,0,0,0,0,0,0)$ is positive. 

\begin{equation}\label{fig:looper}
  \begin{tikzpicture}[scale=.4]
    \draw (-1,1) node[anchor=east]  {$\widetilde{E_8}$};
    \foreach \x in {0,...,7}
    \draw[thick,xshift=\x cm] (\x cm,0) circle (3 mm);
    \foreach \y in {0,...,6}
    \draw[thick,xshift=\y cm] (\y cm,0) ++(.3 cm, 0) -- +(14 mm,0);
    \draw[thick] (4 cm,2 cm) circle (3 mm);
    \draw[thick] (4 cm, 3mm) -- +(0, 1.4 cm);
    
    \foreach \x in {1,...,6}
    \draw[xshift=16 cm] (-2*\x cm, -1) node {$\x$};
    
    \draw (5,2) node {$3$};
    \draw (2,-1) node {$4$};
    \draw (0,-1) node {$2$};    
  \end{tikzpicture}
\end{equation}
Since 
\[
[\omega_{\B {\delta_{k+1}'}}] = [\omega_{\B {\delta_{k}'}}] + [\omega_{\B {\delta_{k}'}}] (A_{k+1}) {\rm PD}[A_{k+1}],
\] 
it follows that $\B {\delta_{k+1}'} > \B {\delta_{k}'}$, because $[\omega_{\B {\delta_{k}'}}] (A_{k+1}) > 0$ and $A_{k+1}$ has non-negative coefficients in the basis $\{L, -E_1,  \ldots, -E_9 \}$. 

Note that the sequence $\{\B{\delta_k'}\}$ is  not normalized. 
We  rescale $\{\B{\delta_k'}\}$ to get a symplectic class 
\[
[\omega_{\B {\delta_k}}]=  L - \sum \delta_{k,i} E_i  = \frac{[\om_{\B {\delta_k'}}]}{[\om_{\B {\delta_k'}}](L)}.
\]
Therefore
\begin{equation*}
[\omega_{\B {\delta_{k+1}}}] (E_l)  = \frac{[\om_{\B {\delta_{k+1}'}}](E_l)}{[\om_{\B {\delta_{k+1}'}}](L)} = \frac{[\om_{\B {\delta_{k}'}}](E_l) + [\om_{\B {\delta_{k}'}}](A)  (A \cdot E_l)}{[\om_{\B {\delta_{k}'}}](L) + [\om_{\B {\delta_{k}'}}](A) ( A \cdot  L)}
\end{equation*}
where $A$ is one of the positive classes \eqref{Eq:relevant}. If $A= \pm r_{ijk} +mD_9$ then we obtain 
\begin{equation}\label{Eq:normalized1}
[\omega_{\B {\delta_{k+1}}}] (E_l)  = \left\{
\begin{array}{l}
\displaystyle{\frac{[\om_{\B {\delta_{k}'}}](E_l) + (m \pm 1)[\om_{\B {\delta_{k}'}}](A)}{[\om_{\B {\delta_{k}'}}](L) + (3m \pm 1)[\om_{\B {\delta_{k}'}}](A)}} \quad \mbox{if} \ l=i, j, k  \\ \\
 \displaystyle{\frac{[\om_{\B {\delta_{k}'}}](E_l) + m[\om_{\B {\delta_{k}'}}](A)}{[\om_{\B {\delta_{k}'}}](L) + (3m \pm 1 )[\om_{\B {\delta_{k}'}}](A)}} \quad \mbox{if} \ l \neq  i, j,  k 
\end{array} \right. 
\end{equation}
If  $A= \pm r_{ij} + mD_9$  then it follows that
\begin{equation}\label{Eq:normalized2}
[\om_{\B {\delta_{k+1}}}] (E_l)  = \left\{
\begin{array}{l}
\displaystyle{\frac{[\om_{\B {\delta_{k}'}}](E_l) + (m \mp 1)[\om_{\B {\delta_{k}'}}](A)}{[\om_{\B {\delta_{k}'}}](L) + 3m[\om_{\B {\delta_{k}'}}](A)}} \quad \mbox{if} \ l=i, \\ \\
\displaystyle{\frac{[\om_{\B {\delta_{k}'}}](E_l) + (m \pm 1)[\om_{\B {\delta_{k}'}}](A)}{[\om_{\B {\delta_{k}'}}](L) + 3m[\om_{\B {\delta_{k}'}}](A)}} \quad \mbox{if} \ l=j, \\ \\ 
\displaystyle{\frac{[\om_{\B {\delta_{k}'}}](E_l) + m[\om_{\B {\delta_{k}'}}](A)}{[\om_{\B {\delta_{k}'}}](L) + 3m[\om_{\B {\delta_{k}'}}](A)}} \quad \mbox{if} \ l \neq  i, j. 
\end{array} \right. 
\end{equation}

Finally, since we are crossing an infinite number of walls, it follows that $m \to \infty$. Therefore, \eqref{Eq:normalized1} and \eqref{Eq:normalized2} imply that $\lim_{k \to \infty} \B\delta_k = (1/3, 1/3, \ldots, 1/3)$.
\end{proof}
This concludes the proof of parts (1) and (2) in Theorem~\ref{T:sequence}.

\begin{remark}
Since the sequence of chambers $\{C_k\}$ accumulates at the monotone point $\B\delta_\infty = (1/3, 1/3, \ldots, 1/3)$, the limit of the blow-up forms $\tom_{\B\delta_k}$ is the dual of the first Chern class $c_1=L-\frac{1}{3}\sum E_i$. As the classes in $\cS^{\leq -3}$ have first Chern class at most $-1$, it follows that their symplectic area with respect to the forms $\tom_{\B\delta_k}$ is eventually negative. In particular, after finitely many steps, the sequence $\{\B\delta_k\}$ crosses only $(-2)$ walls. More generally, one can show that the monotone point $\B\delta_\infty$ is the only accumulation point of the Weyl chambers in the set $\cC_9$ of admissible capacities. Consequently, the walls defined by negative classes of self-intersection $\leq -3$ intersect $\cC_9$ in a relatively compact set containing $\Delta_0$.\eoe
\end{remark}

\subsection{Proof of Theorem~\ref{T:sequence} part (3)}
Note that after renormalization, the capacities in the sequence obtained from Mozes' game are not necessarily pairwise comparable, that is, we may not have $\B\delta_k < \B{\delta}_{k+1}$. To prove statement (3) of Theorem \ref{T:sequence}, we first show that given two consecutive chambers as constructed above, we can find comparable capacities $\B\delta \in C_k$ and $\B{\delta'}\in C_{k+1}$ such that $\B\delta < \B{\delta'}$. For this, we introduce the following purely algebraic definition and lemma.

\begin{definition}
Let $P\subset \R^m$ be the closure of the positive orthant (with the standard inner product). Let $n$ be a vector of $\R^m$ and let $W(n,c)=\{x~|~x\cdot n = c\}$ be an affine hyperplane that divides $\R^m$ into two chambers $H^+_n$ and $H^-_n$. We say that there is a restriction from $H^+_n$ to $H^-_n$ if we can find two points $x^{\pm}\in H^{\pm}_n$ such that $x^+ - x^- \in P$.
\end{definition}

Observe that for nonzero $x,y\in P$, $x\cdot y\geq 0$.

\begin{lemma}\label{lemma:solstice lemma}
If $n\in P$, then there is a restriction from $H^+_n$ to $H^-_n$, but no restriction from $H^-_n$ to $H^+_n$. Conversely, if $n\in -P$, then there is a restriction from $H^-_n$ to $H^+_n$, but no restriction from $H^+_n$ to $H^-_n$. Finally, if $n\not\in \pm P$, then there is a restriction from $H^+_n$ to $H^-_n$ and there is also a restriction from $H^-_n$ to $H^+_n$.
\end{lemma}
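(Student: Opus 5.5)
The key observation is that the difference $d = x^+ - x^-$ of any pair of points with $x^\pm\in H^\pm_n$ satisfies $d\cdot n = x^+\cdot n - x^-\cdot n$, and this quantity is positive (or at least has a definite sign, depending on whether the chambers are taken open or closed). I take $H^+_n=\{x\cdot n> c\}$ and $H^-_n=\{x\cdot n<c\}$. A restriction from $H^+_n$ to $H^-_n$ is then exactly the existence of a vector $d\in P$ with $d\cdot n>0$. Given such a $d$, one takes any $x^-\in H^-_n$ and sets $x^+ = x^- + t d$ for $t$ large. Conversely, any restriction produces such a $d$. Symmetrically, a restriction from $H^-_n$ to $H^+_n$ is the existence of $d\in P$ with $d\cdot n<0$. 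So the whole lemma reduces to the question of which signs $d\cdot n$ can take for $d\in P$.

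The case analysis then runs as follows.

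\begin{itemize}
\item If $n\in P$ is nonzero, then $d\cdot n\geq 0$ for all $d\in P$, by the observation stated just before the lemma. So there is no restriction from $H^-_n$ to $H^+_n$. On the other hand, $d=n$ gives $d\cdot n=|n|^2>0$, so a restriction from $H^+_n$ to $H^-_n$ exists.
\item If $n\in -P$, the same argument applied to $-n$ swaps the roles of $H^+_n$ and $H^-_n$.
\item If $n\notin \pm P$, then $n$ has a strictly positive coordinate $n_i>0$ and a strictly negative coordinate $n_j<0$. Taking $d=e_i$ gives $d\cdot n>0$, and taking $d=e_j$ gives $d\cdot n<0$. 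Hence there are restrictions in both directions.
\end{itemize}

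The only genuine point of care is the boundary convention. The hyperplane $W(n,c)$ itself must belong to neither chamber; otherwise one could pick $x^+=x^-\in W$ with $d=0$. I therefore read $H^\pm_n$ as open half-spaces, which is the convention matching the application to capacities lying off the walls. I also assume $n\neq 0$, since the hyperplane is implicitly nondegenerate. With these conventions, no step should be an obstacle: the argument is just the sign of a linear functional on the orthant.
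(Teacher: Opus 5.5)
Your argument is correct and is essentially the paper's. Both rest on the sign of $(x^+-x^-)\cdot n$ together with $x\cdot y\geq 0$ on $P$, and in the case $n\in P$ existence comes from translating along $n$, which is the paper's reflection across $W(n,c)$. Your reformulation as ``some $d\in P$ has $d\cdot n>0$ (resp.\ $<0$)'' and your use of $e_i,e_j$ in the mixed-sign case are a slightly cleaner substitute for the paper's choice of $w\in W(n,0)$ meeting the interior of $P$. Your conventions (open half-spaces, $n\neq 0$) are also implicit in the paper's proof.
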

\begin{proof}
Suppose $n\in P$. Take $x^-\in H^-_n$ and let $x^+\in H^+_n$ be its reflection along $W(n,c)$. By definition, $x^+-x^- = kn$ for some $k>0$, so that $x^+-x^-\in P$. Conversely, starting with $x^+\in H^+_n$, any element in $H^-_n$ can be written as a linear combination $x^-= (x^+ - kn)+w$ for $k>0$ and $w\in W(n,0)$. Therefore, $x^- - x^+ = w-kn$, so that $(x^- - x^+)\cdot n = -k(n\cdot n)<0$. Consequently, $x^- - x^+\not\in P$. This proves the first assertion.\\

The second assertion follows immediately from the first by considering $-n$ instead of $n$.\\

For the third assertion, suppose $n\not\in\pm P$, and pick $x^+\in H^+_n$. Any element $x^-\in H_n^-$ can be expressed as $x^-= (x^+ - kn)+w$ for $k>0$ and $w\in W(n,0)$, so that $x^+ - x^- = kn-w$. Because $W(n,0)$ intersects the interior of $P$, by choosing $w$ appropriately, we can achieve $x^+ - x^-\in P$. Conversely, if we pick some $x^-\in H_n^-$, every $x^+\in H^+_n$ can be written as $x^+=x^- +kn+w$ for $k>0$ and $w\in W(n,0)$, and
\[x^- - x^+ = -kn-w.\]
As before, we can find $w$ so that $x^- - x^+\in P$.
\end{proof}

Let $C_k$ and $C_{k+1}$ be two consecutive chambers separated by a wall defined by a $(-2)$ class $A_k$ as defined above. We can view the chambers as subsets of $\R^9$. Similarly, the class $A_k=a_0 L-\sum a_iE_i$ can be seen as an element of $\R^9$ by only keeping the coefficients $(a_1,\ldots,a_9)$. Positivity of the root $A_k$ puts it in $P$. We can then apply Lemma~\ref{lemma:solstice lemma} with $H^-=C_k$, $H^+=C_{k+1}$, and $n=A_k$ to conclude that there exists a restriction from $C_{k+1}$ to $C_k$ defined by vectors $\B{\delta'}=x^+$ and $\B\delta=x^-$. In particular, $\B{\delta'} > \B\delta$. Moreover, by taking these sets of capacities arbitrarily close to the wall defined by $A_k$, we can ensure that no other $(-2)$ wall lies between $\B{\delta'}$ and $\B\delta$.

Notice that, according to Lemma~\ref{lemma:NegativeClasses_9balls}, there are only finitely many classes of self-intersection at most $-3$. It follows that any two capacities $\B\delta \in C_k$ and $\B{\delta'} \in C_{k+1}$ satisfying $\B\delta < \B{\delta'}$ are separated by finitely many walls among which there is exactly one wall corresponding to a class $A_{k}$ of self-intersection $-2$ and all other walls correspond to classes of self-intersection at most $-3$. Let $\B\gamma_0=\B\delta<\B\gamma_1 <\ldots<\B\gamma_{\ell}=\B{\delta'}$ be a sequence of capacities such that $\B\gamma_i$ and $\B\gamma_{i+1}$ are separated by exactly one wall. If this wall corresponds to a class of self-intersection at most $-3$ then 
\[
\cA(\B\gamma_i,\rel \Sigma) = \cA(\B\gamma_{i+1},\rel \Sigma)\sqcup \cA_i,
\]
where $\cA_i$ is a submanifold of codimension at least $4$. It follows that the inclusion
\[
\cA(\B\gamma_{i+1},\rel \Sigma)\into \cA(\B\gamma_i,\rel \Sigma)
\]
induces isomorphisms on $\pi_k$ 
for $k\leq 2$.
If the wall corresponds to a $(-2)$-class then
$\pi_2(\cA(\B\gamma_{i},\rel \Sigma),\cA(\B\gamma_{i+1},\rel \Sigma)) \neq 0$,
according to the proof of Lemma~\ref{L:eells}. It follows that
\[
\pi_2\big(\cA(\B\delta,\rel \Sigma),\cA(\B{\delta'},\rel  \Sigma)\big) \neq 0,
\]
hence, by Lemma~\ref{L:eells}, the restriction map 
\[
\Emb(\B {\delta'},\cp^2)\to \Emb(\B{\delta},\cp^2)
\]
is not a homotopy equivalence as claimed. This finishes the proof of Theorem~\ref{T:sequence}.

\subsection{Accumulation of walls for \texorpdfstring{$n\geq 10$}{n=>10}}
Let $n=9+k$, where $k>0$ and  $D_n=3L - (E_1+\cdots + E_n).$ Let $I\subseteq \{1,\ldots,n\}$ be a subset of cardinality $k$ and let
\[
D_9(I) = 3L - \sum_{i\notin I}E_i.
\]
It follows that
\[
D_n = D_9(I) - \sum_{i\in I}E_i.
\]
The same argument as in the case for $n=9$ balls shows that ${\rm Gr}(D_9(I))=1$ and hence $D_9(I)$ is also represented by either an embedded torus or a cusp curve.

For all $n\geq 10$, although we do not know whether stability holds outside $\Delta_0$, it follows from the symplectic blow-up construction that there are infinitely many $(-2)$-walls intersecting the interior of the set $\cC_n$ of admissible capacities, accumulating near the boundary points corresponding to the classes $\frac{1}{3}D_9(I)$. As we will now show, accumulation of walls can only occurs at boundary classes $\B\delta$ where the symplectic volume of $(\widetilde{M}_{\B{\delta}}, \om_{\B{\delta}})$ vanishes.

Let $\mathcal{H}_n\subset H_2(\widetilde{M}_n,\Z)$ be the set of classes $A=a_0L-\sum_{i=1}^n a_iE_i$ such that 
\begin{itemize}
\item $a_i\geq 0$, for $i=0,\ldots n$, 
\item $A^2=a_0^2-\sum_i a_i^2<0$, 
\item and satisfying the adjunction inequality $-2\leq A^2 + K_n\cdot A$ 
\end{itemize}
where $K_n$ is the canonical class. The set $\cH_n$ contains the homology classes of all possible $J$-holomorphic curves (of any genus) that may obstruct the $J$-tame inflation procedure on $\widetilde{M}_n$ endowed with any symplectic form $\om_{\B{\delta}}$. Given $A\in\cH_n$, let $H_A$ be the associated wall in $\cC_n$, that is, the kernel of the functional $\ell_A(\om):=\om(A)$.

Let $\cP^+_n=\{\om~|~\om^2>0~\text{and}~\om\cdot L >0\}$ be the forward positive cone in $H_2(\widetilde{M}_n,\R)$ and let $S\subset\cP^+_n$ be a compact set. Define
\[
\cH(S) = \{A\in\cH_n~|~H_A\cap S\neq\emptyset\}.
\]

\begin{proposition}\label{prop:accumulation points}
For any compact set $S$ in $\cP^+_n$, the set $\cH(S)$ is finite.
\end{proposition}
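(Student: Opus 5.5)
We argue by contradiction: suppose $\cH(S)$ contains infinitely many distinct classes $A_j$. The plan is to normalize the $A_j$ and show that the limit of the normalized classes must be isotropic for the intersection form. In a Lorentzian lattice, the orthogonal complement of an isotropic vector is disjoint from the open positive cone $\cP^+_n$, and this contradicts the fact that each wall $H_{A_j}$ meets the compact set $S\subset\cP^+_n$.

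The first step is an a priori bound on $|A^2|$ in terms of $a_0$. Write $A=a_0L-\sum a_iE_i\in\cH_n$. Since $K_n=-3L+\sum E_i$, we have $K_n\cdot A=-3a_0+\sum a_i$. The adjunction inequality gives
\[
0< -A^2 \le 2+K_n\cdot A = 2-3a_0+\sum_i a_i .
\]
By Cauchy--Schwarz, $\sum a_i\le \sqrt{n}\,(\sum a_i^2)^{1/2}$, and $\sum a_i^2=a_0^2-A^2$. Thus $-A^2=O(\sqrt{a_0^2-A^2})$, which forces $-A^2=O(a_0)$ as $a_0\to\infty$. Only finitely many integral classes have bounded $a_0$, because $\sum a_i^2=a_0^2-A^2\le a_0^2+C$ is then bounded. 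Hence along our infinite sequence we have $a_0(A_j)\to\infty$.

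Next we normalize. Set $\hat A_j=A_j/a_0(A_j)$. Its coefficients $\hat a_i$ satisfy $\sum\hat a_i^2 = 1-A_j^2/a_0^2 = 1+O(1/a_0)$, so the sequence $\hat A_j$ is bounded. Passing to a subsequence, $\hat A_j\to \hat A$ with $\hat A^2 = \lim A_j^2/a_0^2=0$ and $\hat A\cdot L=1$. So $\hat A$ is a nonzero isotropic vector in the closure of the forward cone. Since $A_j^2<0$, we also have $\hat A\neq 0$ in the limit.

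Finally we use the walls. For each $j$ choose $\om_j\in S\cap H_{A_j}$, so that $\om_j\cdot \hat A_j=0$. By compactness pass to $\om_j\to\om\in S$; this gives $\om\cdot\hat A=0$ with $\om^2>0$. By the light cone lemma (the reverse Cauchy--Schwarz inequality for the Lorentzian form of signature $(1,n)$), if $\om^2>0$ and $\hat A$ is a nonzero vector with $\hat A^2\ge0$, then $\om\cdot\hat A\neq 0$. This is the desired contradiction.

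The main obstacle is the first step, bounding $-A^2$ linearly in $a_0$ so that the normalized limit is genuinely isotropic. If the Cauchy--Schwarz estimate needs care, one can alternatively use $-A^2\le 2+K_n\cdot A$ together with $|K_n\cdot A|\le C\,a_0$; the latter follows from $a_i\le a_0+O(1)$, which in turn follows from $a_i^2\le a_0^2-A^2$ and a bootstrap. The remaining steps are routine compactness arguments.
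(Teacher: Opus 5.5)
Your argument is correct, but it takes a different route from the paper's.

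\textbf{The paper's route.} The paper argues directly and quantitatively.
\begin{itemize}
\item Compactness of $S$ gives a uniform $m(S)>0$ with $-Q(v,v)\geq m\|v\|_e^2$ on $\omega^\perp$ for all $\omega\in S$.
\item It splits $K_n=\kappa_\omega\omega+K^0(\omega)$ with $K^0(\omega)\in\omega^\perp$.
\item It uses the wall condition $\omega\cdot A=0$ immediately, so that Cauchy--Schwarz in the negative definite form on $\omega^\perp$ gives $|A\cdot K_n|\leq B\sqrt{q_\omega(A)}$.
\item The adjunction inequality then reads $q_\omega(A)-B\sqrt{q_\omega(A)}\leq 2$. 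This bounds $q_\omega(A)$, and hence $\|A\|_e$, uniformly over $S$, and discreteness of the lattice finishes the proof.
\end{itemize}

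\textbf{Your route.} You separate the two ingredients.
\begin{itemize}
\item First, without reference to $S$, you extract from adjunction plus a Euclidean Cauchy--Schwarz bound on $K_n\cdot A$ the estimate $-A^2\leq C(1+a_0)$. This says that large classes in $\cH_n$ become isotropic after normalization.
\item Only then do you bring in $S$, through a subsequence and the purely qualitative light cone lemma at one limit point $\omega$.
\end{itemize}

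\textbf{What each buys.} The paper's proof gives an explicit bound, in terms of $m(S)$ and $B(S)$, on the classes whose walls meet $S$. It is also basis-free: it never uses the explicit form of $K_n$ or the signs of the coefficients. Your proof is non-effective. In exchange, it shows more clearly where walls can accumulate. Any limit of walls has the form $\hat A^\perp$ with $\hat A$ nonzero, isotropic and future-directed. Such a hyperplane meets the closed forward cone only along the null ray through $\hat A$. This is essentially the content of the corollary that accumulation happens only on the round boundary.

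\textbf{One small slip.} The fact that $\hat A\neq 0$ comes from $\hat A\cdot L=1$, not from $A_j^2<0$.
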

\begin{proof}
For any $\om\in S$, the restriction of the intersection form $Q$ on $\om^\perp$ is negative definite. Let $b_\om(x,y):=-Q(x,y)$ and $q_\om(v)=-Q(v,v)$ denote the induced inner product and quadratic form on $\om^\perp$. Let 
\[
\parallel v\parallel_e^2 = v_0^2+\sum_i v_i^2
\]
be the Euclidean norm. By the light-cone lemma, and by compacity of $S$, there exists $m=m(S)>0$ such that 
\[
q_\om(v)\geq m\parallel v\parallel_e^2\quad\text{for all~} \om\in S\text{~and all~} v\in\om^\perp.
\]
For $\om\in S$, write $K_n=\kappa_\om\om + K^0(\om)$ where
\[
\kappa_\om:=\frac{\om\cdot K_n}{\om^2},\quad K^0(\om):= K_n-\kappa_\om\om.
\]
Then $B=B(S):=\max_{\om\in S} \sqrt{q_\om(K^0(\om))}$ is finite. Now pick $A\in \cH(S)$ and suppose that $\om\in H_A\cap S$. Then,
\[
A\cdot K_n = A\cdot K^0(\om) = -b_\om(A,K^0(\om))
\]
and, by Cauchy-Schwarz,
\[
|A\cdot K_n| \leq \sqrt{q_\om(A)}\sqrt{q_\om(K^0(\om))} \leq \sqrt{q_\om(A)} B.
\]
The adjunction inequality yields
\[
-2 \leq A^2+A\cdot K_n \leq -q_\om(A) + \sqrt{q_\om(A)} B
\]
This last inequality imposes an upper bound $T_\om>0$ on the norm $\sqrt{q_\om(A)}$, so that
\[
m\parallel A\parallel_e^2 ~\leq~ q_\om(A) ~\leq~ T^2.
\]
Because $S$ is compact, we get a uniform upper bound $T=T(S)$ for the Euclidean norm of any $A\in\cH$ whose associated wall $H_A$ intersects $S$. Since $\cH$ is discrete, this completes the proof.
\end{proof}

\begin{corollary}
The hyperplane arrangement $\{H_A\}_{A\in\cH}$ is locally finite in $\cC_n$. Moreover, if $\om_{\B\delta}\in \del\cC_n$ is an accumulation point of a sequence of hyperplanes, then $\om_{\B\delta}$ belongs to the round boundary $\sum_i\delta_i^2 =1$ on which the symplectic volume vanishes.\qed
\end{corollary}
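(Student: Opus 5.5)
The corollary has two parts, local finiteness and the location of accumulation points, and both follow from Proposition~\ref{prop:accumulation points}. I first identify $\cC_n$ with a subset of the forward positive cone. Every admissible class $\om_{\B\delta}$ has positive volume $\om_{\B\delta}^2 = 1-\sum_i\delta_i^2>0$ and pairs positively with $L$, since $\om_{\B\delta}\cdot L=1$. Under the affine embedding $\B\delta\mapsto {\rm PD}[\om_{\B\delta}]$, the set $\cC_n$ therefore lies in $\cP^+_n$.

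For local finiteness, I take any point $\om\in\cC_n$. Because $\cP^+_n$ is open in $H_2(\widetilde M_n;\R)$, there is a closed ball $S$ centred at $\om$ with $S\subset\cP^+_n$. Proposition~\ref{prop:accumulation points} then says that only finitely many walls $H_A$ with $A\in\cH_n$ meet $S$. Intersecting $S$ with the affine slice $\{\om\cdot L=1\}$ gives a neighbourhood of $\om$ in $\cC_n$ that meets only finitely many walls, which is exactly local finiteness.

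For the second part, suppose $\om_{\B\delta}\in\del\cC_n$ is an accumulation point of a sequence of distinct walls $H_{A_j}$. This means every neighbourhood of $\om_{\B\delta}$ meets infinitely many of the $H_{A_j}$. If $\om_{\B\delta}^2>0$, then $\om_{\B\delta}$ lies in the open set $\cP^+_n$, because $\om_{\B\delta}\cdot L=1>0$ as well. A compact ball around it inside $\cP^+_n$ would then meet only finitely many walls by the proposition, a contradiction. Hence $\om_{\B\delta}^2\le 0$. Since $\om_{\B\delta}$ is a limit of classes with positive square, continuity also gives $\om_{\B\delta}^2\ge 0$. Therefore $\om_{\B\delta}^2=0$, that is, $\sum_i\delta_i^2=1$, which is the locus where the symplectic volume vanishes.

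The only point needing any care is that "accumulation point of hyperplanes" is read as the failure of local finiteness at that point, and that the openness of $\cP^+_n$ lets the compactness hypothesis of the proposition be applied around every point with positive square. I do not expect any genuine obstacle beyond this.
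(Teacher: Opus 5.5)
Your proposal is correct and is essentially the argument the paper intends: the corollary carries no separate proof (it is marked \qed as an immediate consequence of Proposition~\ref{prop:accumulation points}). You apply that proposition to compact balls inside the open cone $\cP^+_n$, centred at points with $\om_{\B\delta}^2=1-\sum_i\delta_i^2>0$ and $\om_{\B\delta}\cdot L=1$; this gives local finiteness on $\cC_n$ and rules out any accumulation point off the locus $\om_{\B\delta}^2=0$.
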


\section{Applications to symplectomorphism groups}\label{S:applications}
In this section we discuss applications to the topology of symplectomorphism groups of blow-ups of $\cp^2$.

\begin{lemma}\label{lemma:indecomposability}
Let $\widetilde{M}_{\B\delta}$ be the symplectic blow-up of $\cp^2$ at $n\geq 3$ balls of equal capacities $\delta_i=\delta<1/3$. Then, for any tamed $J$, the exceptional classes $E_1,\ldots,E_n$ are $J$-indecomposable. 
\end{lemma}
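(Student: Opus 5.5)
We argue by contradiction, combining Gromov compactness with area and intersection bounds. Suppose that for some tamed $J$ the class $E_i$ is $J$-decomposable. By Gromov compactness (taking limits of embedded representatives for nearby generic $J$, which exist because $\mathrm{Gr}(E_i)\neq 0$), $E_i$ is then represented by a cusp curve $\sum_k m_k C_k$ with at least two components or multiplicity. Write each component class as $A_k = a_0^{(k)}L-\sum_j a_j^{(k)}E_j$. Every component has positive $\om_{\B\delta}$-area, so each has area strictly less than $\om_{\B\delta}(E_i)=\delta$. Summing the $L$-coefficients gives $\sum_k m_k a_0^{(k)} = 0$. Since $L$ is nef (it has nonzero Gromov invariant and $L^2>0$), every irreducible $J$-curve satisfies $a_0^{(k)} = L\cdot A_k \ge 0$. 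Hence all components have $a_0^{(k)}=0$.

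Next we analyse the components with $a_0^{(k)}=0$. Such a component has class $-\sum_j a_j E_j$ and area $-\delta\sum_j a_j>0$, so $\sum_j a_j<0$. Some coefficient $a_j$ must therefore be negative, meaning the class contains $+E_j$ with positive coefficient. These classes are not directly controlled, so we switch to an intersection and adjunction argument instead. The adjunction inequality for an irreducible curve in class $\sum_j b_jE_j$ with $b_j=-a_j$ reads
\[
0\le 2g = 2 + A^2 + K\cdot A = 2 - \sum_j b_j^2 - \sum_j b_j ,
\]
since $K\cdot E_j=-1$. Because $\sum_j b_j>0$ and the $b_j$ are integers, the left side is at most $2 - \sum_j(b_j^2+b_j)$. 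Each term $b_j^2+b_j$ is a nonnegative even integer. The inequality therefore forces exactly one $b_j=1$ with all others in $\{0,-1\}$, or $b_j=-1$ contributions that balance. A cleaner way to finish is to note $\sum_j b_j(b_j+1)\le 2$, which yields $A=E_j$ or $A=E_j-E_l$.

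The point of the hypothesis $\delta<1/3$ is to rule out components that involve $L$. Here we expect the intended argument differs from the above: nefness of $L$ requires care, and the genuinely dangerous decompositions are of the form $E_i = (L-E_i-E_j-E_k)+(E_j+E_k+\dots)$ minus line classes. The main step is therefore to show directly that any component with $a_0>0$ has area at least $1-\sum_{j}\delta a_j$. Combining adjunction ($a_j\in\{0,1\}$ for $a_0=1$, etc.) with equal capacities, a degree-$d$ component must meet the remaining components negatively to balance back to $E_i$. The total area bound then forces some component of positive degree to have area $d-\delta\sum_j a_j$. Using $\sum_j a_j\le 3d$, which follows from $c_1$ and adjunction considerations as in Lemma~\ref{lemma:NegativeClasses_9balls}, this area is at least $d(1-3\delta)>0$, and $\delta<1/3$ makes it large compared with $\delta$. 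The remaining components are then classes $E_j-E_l$, which have zero area for equal capacities, contradicting positivity of area.

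In short, the plan is as follows. First, reduce to a cusp curve with components $A_k$. Second, show that the components $E_j-E_l$ have zero area under equal capacities and are therefore excluded. Third, bound the areas of components of positive degree below by $d(1-3\delta)$; this third step, establishing the bound $\sum_j a_j\le 3d$ for arbitrary tamed $J$, is where we expect the main difficulty. Finally, observe that the sum of all component areas equals $\delta$, which leads to a contradiction once the negative-degree bookkeeping forces the total degree to be zero.
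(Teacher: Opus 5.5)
Your reduction is correct as far as it goes. If every component of the limiting cusp curve had $L$-coefficient $0$, adjunction and positivity of area would force each component to be some $E_j$, of area $\delta$, which is impossible. The gap is the step meant to get you there, namely excluding components with nonzero $L$-coefficient, and neither of your two attempts achieves it.

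The claim that $L$ is $J$-nef because $\mathrm{Gr}(L)\neq 0$ and $L^2>0$ is unjustified. Nonvanishing of $\mathrm{Gr}(L)$ only gives $L\cdot C\geq 0$ for curves $C$ that are not components of every $J$-representative of $L$, and this can fail when $C^2<0$. Your justification never uses $\delta<1/3$, so it cannot be right. For equal capacities $1/3<\delta<1/2$ there are tamed $J$ (in the basis obtained by the Cremona transformation along $L-E_1-E_2-E_3$, these are blow-ups at three collinear points) for which $-L+E_1+E_2+E_3$, of area $3\delta-1>0$, is an embedded $J$-sphere. Every $J$-representative of $L$ then contains this sphere, and $E_1=(L-E_2-E_3)+(-L+E_1+E_2+E_3)$ is a genuine $J$-decomposition.

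Your fallback fails too, for three reasons.
\begin{itemize}
\item The bound $\sum_j a_j\le 3d$ is just $c_1(A)\ge 0$, which is false for $J$-curves in general; Lemma~\ref{lemma:NegativeClasses_9balls} itself lists $L-\sum_{i\in I}E_i$ with $|I|\ge 4$.
\item That lemma is specific to $n=9$ and presupposes that all $E_j$ are represented by embedded $J$-spheres, which is what you are trying to prove for arbitrary $J$.
\item Even where the bound holds, $d(1-3\delta)$ need not exceed $\delta$. For $1/4<\delta<1/3$ and $J$ making three blown-up points collinear, $L-E_1-E_2-E_3$ is a $J$-sphere of positive degree and area $1-3\delta<\delta$.
\end{itemize}
So positive-degree components cannot be ruled out by area alone. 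What actually has to be excluded are their negative-degree partners of positive area, and the proposal contains no argument for this; the closing ``negative-degree bookkeeping'' is not one.

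The missing idea, which is the paper's route, is to apply $\delta<1/3$ only to exceptional classes and then invoke a general indecomposability result. For equal capacities $\delta<1/3$ the form $\om_{\B\delta}$ is reduced, so by \cite[Lemma 2.1]{KKP-2015} the $E_i$ minimize area among all exceptional classes. Directly: an exceptional class $E=dL-\sum_j a_jE_j$ other than the $E_j$ has $d\ge 1$ and $c_1(E)=3d-\sum_j a_j=1$, hence $\om_{\B\delta}(E)=d(1-3\delta)+\delta>\delta$. Here the condition $c_1=1$ pins down $\sum_j a_j$, which is exactly what is unavailable for arbitrary components. Then \cite[Lemma 1.2]{Pin-MaxTori} states that an exceptional class of minimal area is $J$-indecomposable for every tamed $J$. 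That finishes the proof, and the paper never has to control the $L$-coefficients of individual components.
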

\begin{proof}
Under the convention $\om_{FS}(L)=1$, the condition on the capacities implies that the blow-up form $\om_{\B\delta}$ is reduced. From~\cite[Lemma 2.1]{KKP-2015}, it follows that the classes $E_i$ minimize the symplectic area among all symplectic exceptional classes on $\widetilde{M}_{\B\delta}$. The indecomposability then follows from~\cite[Lemma 1.2]{Pin-MaxTori}.
\end{proof}

\begin{corollary}\label{cor:equivalence Stab=Symp}
Consider an embedding $\phi:B^4(\delta)\sqcup\cdots\sqcup B^4(\delta)\into \cp^2$ of $n\geq 3$ balls of equal capacities $\delta<1/3$. Then the stabilizer subgroup $\Stab(\phi(B^4(\B\delta)))$ is homotopy equivalent to the group $\Symp_h(\widetilde{M}_{\B\delta}, \om_{\B\delta})$ of symplectomorphisms of the blow-up $\widetilde{M}_{\B\delta}$ acting trivially on homology. 
\end{corollary}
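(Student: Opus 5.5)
The plan is to compare two fibrations with the same total space up to homotopy. By~\cite[Section 2]{LP2004}, as recalled in the introduction, $\Stab(\phi(B^4(\B\delta)))$ is homotopy equivalent to the homotopy fiber $\Symp_h(\B\delta,\Sigma)$ of the fibration~\eqref{Eq:diagram-ACS-part1}, that is, of $\Diff_h(\widetilde{M}_n,\Sigma)\to\cA(\B\delta,\Sigma)$. The group $\Symp_h(\widetilde{M}_{\B\delta},\om_{\B\delta})$ fits into the analogous fibration
\[
\Symp_h(\widetilde{M}_{\B\delta},\om_{\B\delta})\to \Diff_h(\widetilde{M}_n)\to \cA(\B\delta),
\]
where $\cA(\B\delta)$ is the space of almost complex structures tamed by some form cohomologous to $\om_{\B\delta}$. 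This uses the standard Kronheimer--McDuff argument: uniqueness of cohomologous symplectic forms up to isotopy in the rational case together with contractibility of tamed structures. It therefore suffices to compare the pairs $(\Diff_h(\widetilde{M}_n,\Sigma),\cA(\B\delta,\Sigma))$ and $(\Diff_h(\widetilde{M}_n),\cA(\B\delta))$.

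First, Lemma~\ref{lemma:indecomposability} shows that for every tamed $J$ each $E_i$ is $J$-indecomposable. Since $E_i$ has nonzero Gromov invariant, it is then represented by a unique embedded $J$-holomorphic sphere $\Sigma_i(J)$. Since $E_i\cdot E_j=0$ and the spheres are distinct, they are disjoint by positivity of intersections. This defines a continuous map $J\mapsto \Sigma(J)=\bigcup_i\Sigma_i(J)$ from $\cA(\B\delta)$ to the space $\mathcal{E}$ of configurations of $n$ disjoint embedded symplectic spheres in the classes $E_1,\ldots,E_n$.

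Next, I would use the map $J\mapsto\Sigma(J)$ to form the fibration
\[
\cA(\B\delta,\Sigma)\to\cA(\B\delta)\to \mathcal{E},
\]
whose fiber over the standard configuration $\Sigma$ is exactly $\cA(\B\delta,\Sigma)$. Correspondingly, the group $\Diff_h(\widetilde{M}_n)$ acts on $\mathcal{E}$, which gives the fibration
\[
\Diff_h(\widetilde{M}_n,\Sigma)\to\Diff_h(\widetilde{M}_n)\to\mathcal{E}_0,
\]
where $\mathcal{E}_0$ is the orbit of $\Sigma$. Because of the indecomposability, every element of $\mathcal{E}$ is $J$-holomorphic for some tamed $J$, and any two embedded symplectic exceptional configurations are symplectically isotopic. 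Hence $\mathcal{E}_0=\mathcal{E}$, and both fibrations have the same base.

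The two base projections are compatible with the evaluation maps $\Diff\to\cA$ given by $g\mapsto g_*J_0$. Comparing the two resulting fibration sequences then shows that the homotopy fibers over $\cA(\B\delta,\Sigma)$ and over $\cA(\B\delta)$ agree, giving $\Symp_h(\B\delta,\Sigma)\simeq\Symp_h(\widetilde{M}_{\B\delta},\om_{\B\delta})$. Concretely, both are the stabilizer of the symplectic form, intersected in the first case with the stabilizer of $\Sigma$, and the restriction to $\Sigma$ is harmless because $\Symp_h$ already acts transitively on $\mathcal{E}$ with contractible fiber over $\Sigma$ after accounting for the $\Symp(\Sigma)$ factors. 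I expect the main obstacle to be this last point: proving that $\Symp_h(\widetilde{M}_{\B\delta})\to\mathcal{E}$ is a fibration, which needs a symplectic neighborhood theorem with parameters. One must also check that the contribution of reparametrizations of the spheres, in the spirit of~\cite[Section 2]{LP2004}, matches on both sides. I would handle both through the standard neighborhood argument in~\cite{LP2004}, or equivalently by showing directly that $\Symp_h(\widetilde{M}_{\B\delta})$ acts on $\mathcal{E}$ with stabilizer $\Symp_h(\B\delta,\Sigma)$ and that $\mathcal{E}\simeq\cA(\B\delta)/\cA(\B\delta,\Sigma)$ is contractible.
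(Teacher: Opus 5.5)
Your main argument is correct, and it takes a genuinely different route from the paper's.

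\textbf{The paper's route.} The paper argues directly with the symplectomorphism group. $\Symp_h(\widetilde{M}_{\B\delta},\om_{\B\delta})$ acts transitively on the space $\cC([\Sigma])$ of configurations of disjoint $\om_{\B\delta}$-symplectic spheres in the classes $E_i$, with stabilizer $\Symp(\B\delta,\Sigma)$. One has $\cC([\Sigma])\simeq\cJ([\Sigma])$, and Lemma~\ref{lemma:indecomposability} gives $\cJ([\Sigma])=\cJ(\om_{\B\delta})$. So the base of $\Symp(\B\delta,\Sigma)\to\Symp_h(\widetilde{M}_{\B\delta},\om_{\B\delta})\to\cC([\Sigma])$ is contractible.

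\textbf{Your route.} You use indecomposability to fiber $\Diff_h(\widetilde{M}_n)$ and $\cA(\B\delta)$ compatibly over one configuration space $\mathcal{E}$, with fibres $\Diff_h(\widetilde{M}_n,\Sigma)$ and $\cA(\B\delta,\Sigma)$ over $\Sigma$. Since $g\mapsto g_*J_0$ is a map over $\mathcal{E}$, its homotopy fibre agrees with that of its restriction to the fibres over $\Sigma$. This identifies $\Symp_h(\widetilde{M}_{\B\delta},\om_{\B\delta})$ with $\Symp_h(\B\delta,\Sigma)$.

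This needs neither contractibility of $\mathcal{E}$ nor a parametric symplectic neighbourhood theorem. Local triviality of $\cA(\B\delta)\to\mathcal{E}$ follows from Cerf--Palais local sections $s\colon U\to\Diff_0(\widetilde{M}_n)$ via $(C,J)\mapsto s(C)_*J$. This uses the $\Diff_h$-invariance of $\cA(\B\delta)$, the equivariance $\Sigma(g_*J)=g(\Sigma(J))$, and continuity of $J\mapsto\Sigma(J)$. You should state this explicitly, since it is what justifies calling the map a fibration. You should also note that Lemma~\ref{lemma:indecomposability} extends from $\om_{\B\delta}$-tame structures to all of $\cA(\B\delta)$, because cohomologous forms are related by diffeomorphisms acting trivially on homology.

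Your route stays inside the $\Diff$/$\cA$ framework of Section~\ref{S:preliminaries}, where $\Symp_h(\B\delta,\Sigma)$ is introduced as a homotopy fibre. The paper's route is shorter, given the standard facts it quotes.

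\textbf{Your last paragraph should be dropped or corrected.}
\begin{itemize}
\item The ``obstacle'' you anticipate (that $\Symp_h\to\mathcal{E}$ is a fibration) plays no role in your argument.
\item The alternative you sketch conflates two spaces. Your $\mathcal{E}$ is the image of $\cA(\B\delta)$, so it contains configurations that are symplectic only for some form cohomologous to $\om_{\B\delta}$. Hence $\Symp_h(\widetilde{M}_{\B\delta},\om_{\B\delta})$ does not act transitively on it.
\item There is no reason for this $\mathcal{E}$ to be contractible. By your own fibration $\cA(\B\delta,\Sigma)\to\cA(\B\delta)\to\mathcal{E}$, that would mean $\cA(\B\delta,\Sigma)\into\cA(\B\delta)$ is a weak equivalence, which nothing here establishes.
\item The direct route requires the space of $\om_{\B\delta}$-symplectic configurations. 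Its contractibility comes from $\cJ(\om_{\B\delta})\to\cC([\Sigma])$ having contractible total space and contractible fibres.
\item No reparametrization issue arises, because configurations are unparametrized and $\Symp_h(\B\delta,\Sigma)$ preserves $\Sigma$ only setwise.
\end{itemize}
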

\begin{proof}
By~\cite[Section 2]{LP2004}, the stabilizer subgroup $\Stab(\phi(B^4(\B\delta)))$ is homotopy equivalent to the group $\Symp_h(\B\delta,\Sigma)$ preserving each exceptional divisor $\Sigma_i$. This later group is the stabilizer of the transitive action of $\Symp_h(\widetilde{M}_{\B\delta}, \om_{\B\delta})$ on the space $\cC([\Sigma])$ of symplectic configurations of disjoint exceptional divisors in class $[\Sigma]=[\Sigma_1] \sqcup \cdots \sqcup [\Sigma_n]$. This space is homotopy equivalent to the space $\cJ([\Sigma])$ of tame almost complex structures for which $[\Sigma]$ is represented by embedded $J$-holomorphic spheres. By Lemma~\ref{lemma:indecomposability}, this space is the whole space $\cJ(\om_{\B\delta})$ of tame structures, which is contractible. We thus obtain a fibration with contractible base
\begin{equation}
\Symp(\B\delta, \Sigma)\to \Symp_h(\widetilde{M}_{\B\delta}, \om_{\B\delta})\to \cC([\Sigma]).\label{eq:fibration B}
\end{equation}
from which the statement follows.
\end{proof}

\begin{proposition}\label{prop:homotopy type Symp}
Let $\widetilde{M}_{\B\delta}$ be the symplectic blow-up of $\cp^2$ at $n\geq 3$ balls of equal capacities $\delta_i=\delta<1/n$. Then the symplectomorphism group $\Symp_h(\widetilde{M}_{\B\delta}, \om_{\B\delta})$ is homotopy equivalent to the stabilizer of $n$ distinct points in $\cp^2$. Equivalently, $\Symp_h(\widetilde{M}_{\B\delta}, \om_{\B\delta})$ is homotopy equivalent to the homotopy fibre of the action map $\PU(3)\to\Conf(n,\cp^2)$. In particular, $\Symp_h(\widetilde{M}_{\B\delta}, \om_{\B\delta})$ is connected.
\end{proposition}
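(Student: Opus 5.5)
The plan is to combine Corollary~\ref{cor:equivalence Stab=Symp} with Theorem~\ref{T:main} through the fibration~\eqref{Eq:diagram-Embs-part1}. Since $n\geq 3$ and $\delta<1/n\leq 1/3$, Corollary~\ref{cor:equivalence Stab=Symp} gives
\[
\Symp_h(\widetilde{M}_{\B\delta},\om_{\B\delta})\simeq \Stab(\phi(B^4(\B\delta))).
\]
This reduces the problem to identifying the stabilizer of an unparametrized configuration of balls with the stabilizer of $n$ points.

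Next, consider the fibration $\Stab(\phi(B^4(\B\delta)))\to\Symp(\cp^2)\to\IEmb(\B\delta,\cp^2)$. Because $n\delta<1$, Theorem~\ref{T:main} identifies the base with $\Conf(n,\cp^2)$ via the map ``center'' (passing through $\IEmb^*$). I will compare this with the evaluation fibration
\[
\Stab(p_1,\dots,p_n)\to\Symp(\cp^2)\to\Conf(n,\cp^2),
\]
where $p_i=\phi_i(0)$; this map is a fibration because $\Symp(\cp^2)$ acts transitively on configurations of distinct points. The two fibrations are related through the intermediate one over $\IEmb^*(\B\delta,\cp^2)$, whose fiber is the stabilizer of the balls with marked centers. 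The centers map $\IEmb^*\to\Conf$ is equivariant and covers the identity on $\Symp(\cp^2)$. Since it is a homotopy equivalence, the five lemma (applied to the long exact sequences, or equivalently to the induced map of homotopy fibers) shows that the inclusion of stabilizers is a weak equivalence, hence a homotopy equivalence. The same argument applied to $\IEmb^*\to\IEmb$, which is a homotopy equivalence by \cite[Section 2.2]{AKP2024}, identifies the stabilizer with marked centers with $\Stab(\phi(B^4(\B\delta)))$. I also need to check that $\Symp_h(\cp^2)=\Symp(\cp^2)$ and that the relevant spaces are connected, so that transitivity holds.

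For the second formulation, I use Gromov's theorem that $\Symp(\cp^2)\simeq\PU(3)$, where the inclusion is an equivariant homotopy equivalence. Comparing the orbit maps $\PU(3)\to\Conf(n,\cp^2)$ and $\Symp(\cp^2)\to\Conf(n,\cp^2)$, the homotopy fibers agree, and $\Stab(p_1,\dots,p_n)$ is the homotopy fiber of the latter. For connectedness, I will use the exact sequence
\[
\pi_1\PU(3)\to\pi_1\Conf(n,\cp^2)\to\pi_0(\mathrm{hofib})\to\pi_0\PU(3)=0.
\]
Since $\cp^2$ is simply connected of real dimension $4$, removing points does not change $\pi_1$, so the Fadell--Neuwirth fibrations give $\pi_1\Conf(n,\cp^2)=0$. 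Hence the homotopy fiber is connected.

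The main obstacle is the comparison of stabilizers. The homotopy equivalence in Theorem~\ref{T:main} was produced abstractly, through stability and direct limits, so I have to make sure the map realizing it is the $\Symp(\cp^2)$-equivariant center map and not merely some homotopy equivalence. The proof of Theorem~\ref{T:main} does show that ``center'' itself is an equivalence, and that is what I will rely on. I also need $\Symp(\cp^2)$ to act with local sections on $\IEmb^*$, so that the orbit maps are genuine fibrations; this follows from the symplectic isotopy extension theorem.
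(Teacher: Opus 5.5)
Your proposal is correct and follows essentially the same route as the paper. Corollary~\ref{cor:equivalence Stab=Symp} identifies $\Symp_h(\widetilde{M}_{\B\delta},\om_{\B\delta})$ with the stabilizer of the balls. The fibration over $\IEmb(\B\delta,\cp^2)\simeq\Conf(n,\cp^2)$ from Theorem~\ref{T:main} identifies that with the stabilizer of $n$ points. Connectedness then follows from $\pi_0\PU(3)=0$ and $\pi_1\Conf(n,\cp^2)=0$.

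You also insist that the equivalence be realized by the $\Symp(\cp^2)$-equivariant center map, which the proof of Theorem~\ref{T:main} does provide, so that the two stabilizers can actually be compared. This makes explicit a step the paper passes over in one line.
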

\begin{proof}
By Theorem~\ref{T:main}, the condition $\delta_i=\delta<1/n$ implies that the embedding space $\IEmb(\B\delta,\cp^2)$ is homotopy equivalent to the ordered configuration space $\Conf(n,\cp^2)$. The homotopy fibration
\begin{equation}
\Symp(\B\delta, \Sigma)\simeq \Stab(\phi(B^4(\B\delta))) \to \Symp(\cp^2)\to \IEmb(\B\delta,\cp^2)\label{eq:fibration A}\\
\end{equation}
implies the stabilizer of $n$ balls is homotopy equivalent to the stabilizer of a configuration of $n$ distinct points. The homotopy equivalence follows from Corollary~\ref{cor:equivalence Stab=Symp}. Connectedness follows from the facts that $\Symp(\cp^2)\simeq \PU(3)$ is connected and that $\Conf(n,\cp^2)$ is simply connected.
\end{proof}

\begin{proposition}\label{prop:IEmb simply connected}
Let $3\leq n\leq 9$ and let  $\B\delta$ be a set of admissible capacities in the reduced chamber. Then the space $\IEmb(\B\delta,\cp^2)$ of symplectic embeddings is simply connected.
\end{proposition}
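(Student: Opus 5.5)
Use the homotopy fibration \eqref{eq:fibration A},
\[
\Symp(\B\delta,\Sigma)\simeq \Stab(\phi(B^4(\B\delta)))\to \Symp(\cp^2)\to \IEmb(\B\delta,\cp^2),
\]
together with $\Symp(\cp^2)\simeq \PU(3)$. Since $\PU(3)$ is connected with $\pi_1(\PU(3))=\Z/3$, the long exact sequence gives
\[
\pi_1\Symp(\B\delta,\Sigma)\to \pi_1\PU(3)\to \pi_1\IEmb(\B\delta,\cp^2)\to \pi_0\Symp(\B\delta,\Sigma)\to 0 .
\]
So $\IEmb(\B\delta,\cp^2)$ is simply connected exactly when two things hold: $\Symp(\B\delta,\Sigma)$ is connected, and $\pi_1\Symp(\B\delta,\Sigma)\to\pi_1\PU(3)$ is surjective. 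I would prove these two facts separately.

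\emph{Connectedness.} For $\B\delta$ in the reduced chamber with $n\le 9$, I would reproduce the fibration \eqref{eq:fibration B},
\[
\Symp(\B\delta,\Sigma)\to\Symp_h(\widetilde M_{\B\delta},\om_{\B\delta})\to\cC([\Sigma]).
\]
Reducedness is what makes Lemma~\ref{lemma:indecomposability} work: by \cite[Lemma 2.1]{KKP-2015} the $E_i$ minimize area among exceptional classes, so they are $J$-indecomposable for every tame $J$. Hence $\cC([\Sigma])\simeq\cJ(\om_{\B\delta})$ is contractible and $\Symp(\B\delta,\Sigma)\simeq\Symp_h(\widetilde M_{\B\delta},\om_{\B\delta})$. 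For $n\le 9$ and reduced classes, the Torelli symplectomorphism group is connected. For $n\le 5$ this is \cite{Anjos-al-Stability}; for $6\le n\le 9$ it is the result of Li--Li--Wu \cite{LLW-Torelli}, whose reduced-class analysis shows $\pi_0\Symp_h=1$ in that range. This gives the vanishing of $\pi_0$.

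\emph{Surjectivity on $\pi_1$.} Here I would deform to a convenient point of the same stability chamber, for instance equal small capacities $\delta<1/n$, which lie in $\Delta_0\cap C_{\rm red}$. Stability along a path inside $C_{\rm red}$ is not automatic, so the statement must hold for each chamber of $C_{\rm red}$. It is therefore better to argue directly: $\pi_1\PU(3)=\Z/3$ is generated by a circle action fixing a point, for example the loop $[z_0:z_1:z_2]\mapsto[z_0:z_1:e^{2\pi i t}z_2]$. A loop in $\IEmb$ coming from $\PU(3)$ is null-homotopic if it lifts to $\Stab$. Embed the balls so that a toric circle action (a rotation about the centers) preserves each ball. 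Then the generating loop factors through $\Stab(\phi(B^4(\B\delta)))$, and the map on $\pi_1$ is onto. For $n\ge 4$, no single nontrivial $S^1\subset\PU(3)$ fixes four generic points. I would instead use the fact that the loop in $\pi_1\PU(3)$ is already in the image from $\pi_1$ of the stabilizer of a single ball: a loop of symplectomorphisms supported near one ball can be composed with rotations, using that $\pi_1\Conf(n,\cp^2)=0$ and $\pi_1\Emb$ of a single ball is trivial.

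\emph{Main obstacle.} I expect the hardest step to be the surjectivity onto $\Z/3$ in the chambers where $n$ is large. Cleanly, one needs that $\IEmb$ of one ball is $\simeq\cp^2$ (simply connected), and that the restriction $\IEmb(\B\delta)\to\IEmb(\delta_1)$ plus the fibration over it transfer the lift. I would try first to show the composite $\pi_1\Symp(\B\delta,\Sigma)\to\pi_1\PU(3)$ factors through $\pi_1$ of the one-ball stabilizer, and then use Lemma~\ref{lemma:indecomposability}--type arguments to kill the remaining obstruction.
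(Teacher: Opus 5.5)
Your reduction via \eqref{eq:fibration A} is correct: $\IEmb(\B\delta,\cp^2)$ is simply connected if and only if $\Symp(\B\delta,\Sigma)$ is connected and $\pi_1\Symp(\B\delta,\Sigma)\to\pi_1\PU(3)\cong\Z/3$ is onto. However, neither half is actually established.

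\emph{Connectedness.} Lemma~\ref{lemma:indecomposability} uses that \emph{all} capacities are equal. For a general reduced class only $E_n$ is guaranteed to have minimal area among exceptional classes. In the open reduced chamber you may have $\delta_1>\delta_2$. Then the positive root $E_1-E_2$ has positive area and is $J$-holomorphic on a codimension-$2$ stratum of tame structures (e.g.\ integrable ones coming from infinitely near points). Since $E_1\cdot(E_1-E_2)=-1$, no such $J$ has an embedded sphere in class $E_1$. So $\cC([\Sigma])$ is the complement of codimension-$2$ strata in the contractible space $\cJ(\om_{\B\delta})$. It is connected but not simply connected: a meridian of the stratum is essential, by the Eells-duality argument of Lemma~\ref{L:eells}. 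In particular $\Symp(\B\delta,\Sigma)\not\simeq\Symp_h(\widetilde M_{\B\delta},\om_{\B\delta})$ in general.

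Fibration~\eqref{eq:fibration B} then only gives a \emph{surjection} $\pi_0\Symp(\B\delta,\Sigma)\to\pi_0\Symp_h$. Connectedness of $\Symp_h$ from Theorem~\ref{thm:LLW} therefore tells you nothing about $\pi_0\Symp(\B\delta,\Sigma)$ unless you also prove that $\pi_1\Symp_h\to\pi_1\cC([\Sigma])$ is onto. The paper in fact runs this implication the other way: Corollary~\ref{cor:connectedness Symp} deduces connectedness of $\Symp_h$ \emph{from} this proposition. Note also that Theorem~\ref{thm:LLW} gives connectedness only in type $\mathbb{A}$, i.e.\ in the open chamber. On the wall $r_0=0$ one has type $\mathbb{D}_k$ and $\pi_1\IEmb\neq 0$ by Example~\ref{ex:non simply connected 1}.

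\emph{Surjectivity onto $\Z/3$.} The circle-action argument fails for $n\ge 4$ and for large balls, as you note yourself. The fallback is only a sketch and never produces a lift of the generator.

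\emph{The missing idea} is the deformation you set aside as ``not automatic''. You do not need literal stability along a path in the reduced chamber, only invariance of $\pi_1$. Start at $\B\delta_0\in\Delta_0\cap C_{\rm red}$, where Theorem~\ref{T:main} gives $\IEmb(\B\delta_0,\cp^2)\simeq\Conf(n,\cp^2)$, which is simply connected. The open reduced chamber is a Weyl chamber, so it meets no $(-2)$-wall, and $(-1)$-walls are exterior. By the classification of Lemma~\ref{lemma:NegativeClasses_9balls}, the only walls crossed along a path inside the chamber come from classes with $c_1(A)<0$. By Lemma~\ref{L:strata} their strata have codimension $2-2c_1(A)\ge 4$. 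Hence the relative homotopy groups of the corresponding pairs of spaces of almost complex structures vanish in degrees $\le 3$. The same fibration comparison as in Lemma~\ref{L:eells} then shows that $\pi_1$ and $\pi_2$ of the embedding space are unchanged across each such wall. This is the paper's proof, and it bypasses both \cite{LLW-Torelli} and the $\Z/3$ question entirely.
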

\begin{proof}
For $\sum_i\delta_i<1/n$, $\IEmb(\B\delta,\cp^2)\simeq \Conf(n,\cp^2)$ which is simply connected. As the reduced chamber does not contain any $(-2)$-wall, the only walls crossed as $\B\delta$ varies within de reduced chamber correspond to strata of codimension at least $4$. Therefore, since stability holds for $3\leq n\leq 9$, the homotopy groups $\pi_k \IEmb(\B\delta,\cp^2)$, $k=1,2$, are constant. The result follows.
\end{proof}

\begin{corollary}\label{cor:connectedness Symp}
For $3\leq n\leq 9$ and $\B\delta$ in the (open) reduced chamber, the symplectomorphism group of the blow-up $\Symp_h(\widetilde{M}_{\B\delta}, \om_{\B\delta})$ is connected.
\end{corollary}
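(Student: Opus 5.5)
The plan is to read connectedness off the homotopy fibration \eqref{eq:fibration A},
\[
\Stab(\phi(B^4(\B\delta))) \to \Symp(\cp^2)\to \IEmb(\B\delta,\cp^2),
\]
and then transfer the conclusion to $\Symp_h(\widetilde{M}_{\B\delta},\om_{\B\delta})$. The ingredients are those already used for Proposition~\ref{prop:homotopy type Symp}, with Theorem~\ref{T:main} replaced by Proposition~\ref{prop:IEmb simply connected}.

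First, the long exact sequence of homotopy groups contains the segment
\[
\pi_1\Symp(\cp^2)\to\pi_1\IEmb(\B\delta,\cp^2)\to\pi_0\Stab(\phi(B^4(\B\delta)))\to\pi_0\Symp(\cp^2).
\]
Since $\Symp(\cp^2)\simeq\PU(3)$ is connected, the last term is trivial. By Proposition~\ref{prop:IEmb simply connected}, $\pi_1\IEmb(\B\delta,\cp^2)=0$ for $3\le n\le 9$ and $\B\delta$ in the reduced chamber. Hence $\pi_0\Stab(\phi(B^4(\B\delta)))$ is trivial. By \cite[Section 2]{LP2004}, the stabilizer is homotopy equivalent to $\Symp_h(\B\delta,\Sigma)$, so this group is connected as well.

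Next I pass from $\Symp_h(\B\delta,\Sigma)$ to the full group $\Symp_h(\widetilde{M}_{\B\delta},\om_{\B\delta})$ using the fibration \eqref{eq:fibration B},
\[
\Symp_h(\B\delta,\Sigma)\to\Symp_h(\widetilde{M}_{\B\delta},\om_{\B\delta})\to\cC([\Sigma]).
\]
With connected fiber, the total space is connected once the base $\cC([\Sigma])$ is connected. In Corollary~\ref{cor:equivalence Stab=Symp} this was shown for equal capacities $\delta<1/3$ via Lemma~\ref{lemma:indecomposability}. Here I need it for arbitrary $\B\delta$ in the open reduced chamber.

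That extension is the main obstacle. For reduced $\B\delta$, \cite[Lemma 2.1]{KKP-2015} still applies: the blow-up form is reduced, so the $E_i$ minimize area among exceptional classes, with $E_n$ of smallest area. I would then run the argument of \cite[Lemma 1.2]{Pin-MaxTori} inductively. Indecomposability of $E_n$ comes first; then $E_{n-1}$, where any degeneration of $E_{n-1}$ would have to involve components of area smaller than $\delta_{n-1}$, which is ruled out by reducedness and strictness of the open chamber, and so on. This shows that the space $\cJ([\Sigma])$ of tame $J$ for which every $E_i$ has an embedded representative is the whole contractible space $\cJ(\om_{\B\delta})$.

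Should full indecomposability fail for some reduced class, there is a fallback. It suffices that $\cJ([\Sigma])$ be the complement of strata of real codimension at least $2$ in $\cJ(\om_{\B\delta})$, because such a complement is connected. Each degeneration of some $E_i$ requires a $J$-curve of negative self-intersection other than $E_i$, and by the adjunction computations as in Lemma~\ref{lemma:NegativeClasses_9balls} such curves occur in codimension $2-2c_1\ge 2$. In either case $\cC([\Sigma])\simeq\cJ([\Sigma])$ is connected, and the long exact sequence of \eqref{eq:fibration B} gives $\pi_0\Symp_h(\widetilde{M}_{\B\delta},\om_{\B\delta})=0$.
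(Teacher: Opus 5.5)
Your overall route is the paper's. Fibration~\eqref{eq:fibration A}, Proposition~\ref{prop:IEmb simply connected} and the connectedness of $\PU(3)$ give $\pi_0\Symp(\B\delta,\Sigma)=0$, and the exact sequence of~\eqref{eq:fibration B} concludes once $\pi_0\cC([\Sigma])=\{1\}$, which the paper simply asserts.

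Your main justification of that last point, by inductive indecomposability, is false. In the open reduced chamber $\delta_i>\delta_n$ for every $i<n$, so the $(-2)$-class $E_i-E_n$ has positive area. It is therefore represented by an embedded $J$-sphere for all $J$ in a nonempty codimension-$2$ stratum. For example, blow up the $n$-th ball centred at a point of the exceptional sphere in class $E_i$ (the symplectic version of infinitely near points); the proper transform of that sphere is a symplectic $(-2)$-sphere in class $E_i-E_n$. For such $J$, $E_i$ degenerates as $(E_i-E_n)\cup E_n$. The component $E_n$ has smaller area, but reducedness does not exclude it, since it is itself an exceptional class. So none of $E_1,\dots,E_{n-1}$ is indecomposable for all tamed $J$, and $\cJ([\Sigma])\subsetneq\cJ(\om_{\B\delta})$. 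Lemma~\ref{lemma:indecomposability} works only because equal capacities put the walls $E_i-E_j$ at zero area.

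Your fallback is the right argument, but its stated reason does not hold. Negative self-intersection alone does not give codimension $\geq 2$: an exceptional component such as $E_n$ above has $c_1=1$ and occurs in codimension $0$. Also, Lemma~\ref{lemma:NegativeClasses_9balls} is for $n=9$ and assumes all $E_i$ are represented, which is exactly what fails here, so it is not the right reference.

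What you need is that a nontrivial degeneration $E_i=\sum_k m_kC_k$ contains a simple component $C$ with $c_1(C)\leq 0$ and $C\cdot C<0$.
\begin{itemize}
\item A component with $c_1\leq 0$ exists because $c_1(E_i)=1$ while $\sum_k m_k\geq 2$.
\item By the light cone lemma ($-K$ lies in the closed forward cone for $n\leq 9$), a component with $c_1\le 0$ and non-negative square must be a multiple of $D_9$, which can only happen when $n=9$.
\item In that case, adjunction applied to $E_i-mD_9$ (square $-1-2m$, $c_1=1$) shows the remaining components still include one with $c_1\leq 0$, and that one has negative square.
\end{itemize}
Adjunction then gives $g(C)+c_1(C)\leq 0$, so such curves occur only on strata of codimension $2-2g(C)-2c_1(C)\geq 2$. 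Hence $\cJ([\Sigma])$ is the complement of strata of codimension at least $2$ in the contractible space $\cJ(\om_{\B\delta})$. It is therefore connected, and the rest of your argument goes through.
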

\begin{proof}
Under the hypotheses, the long exact homotopy sequence associated to fibration~\eqref{eq:fibration A} implies that $\Symp(\B\delta, \Sigma)$ is connected. Fibration~\eqref{eq:fibration B} gives
\[
\cdots \to \pi_0 \Symp(\B\delta, \Sigma) \to \pi_0\Symp_h(\widetilde{M}_{\B\delta}, \om_{\B\delta})\to \pi_0\cC([\Sigma])=\{1\}
\]
and the result follows.
\end{proof}

\begin{remark}
Corollary~\ref{cor:connectedness Symp} confirms a recent (and more general) result of Li, Li, and Wu~\cite{LLW-Torelli}. In their terminology, the condition \say{$\B\delta$ is in the open reduced chamber} implies that the blow-up form $\om_{\B\delta}$ is of type $\mathbb{A}$ and that the group $\Symp_h(\widetilde{M}_{\B\delta}, \om_{\B\delta})$ is connected. See Definition~\ref{def:types} and Theorem~\ref{thm:LLW} below.\eoe
\end{remark}

\section{Non simply connected embedding spaces}\label{S:example}

Although we are not able to prove that the homotopy type of $\IEmb(\B\delta,\cp^2)$ changes each time the capacities $\B\delta$ cross a wall in $\cC_9$, we can show that it does change after crossing a finite number of $(-2)$-walls. To do this, we combine our results with the recent work~\cite{LLW-Torelli} of Li, Li, and Wu on the symplectic Torelli group of symplectic rational surfaces.

\begin{definition}[{\cite[Definition 2.13]{LLW-Torelli}}]\label{def:types}
Let $n\geq 5$ and let $\B\delta = \left(\delta_1,\ldots,\delta_n\right)$ be an admissible set of capacities in the reduced chamber of $\cC_n$. 
\begin{itemize}
\item The set $\B\delta$ is of type $\mathbb{E}_6,\mathbb{E}_7,\mathbb{E}_8$ if  
\[
\B\delta=\left(\underbrace{ \frac{1}{3}, \frac{1}{3}, \cdots \frac{1}{3}}_k, m_{k+1} \cdots\right),
\]
where $m_{k+1}<\frac{1}{3}$, $k =6,7,8,$ respectively.
 
\item The set $\B\delta$ is of type $\mathbb{D}_k$ if
\[
\B\delta=\left(a,  \underbrace{\frac{1-a}{2}, \frac{1-a}{2}, \frac{1-a}{2}, \cdots \frac{1-a}{2}}_k, m_{k+2} \cdots\right),
\]
where $ \frac{1-a}{2}> m_{k+2}$, and either  $\frac{1}{3} < a <1 $  and $ k\geqslant 4$; or $a=\frac{1}{3}$ and  $k=4$.
 
\item  The set $\B\delta$ is of type $\mathbb{A}$ for all other possibilities. 
\end{itemize}
\end{definition}

\begin{theorem}[{\cite[Theorem 1.5]{LLW-Torelli}}]\label{thm:LLW}
Let $n\geq 5$ and let $\B\delta$ be an admissible set of capacities in the reduced chamber. Let $\om_{\B\delta}$ be the blow-up form and suppose $\om_{\B\delta}\cdot K_n<0$.
\begin{itemize}
\item If $\B\delta$ is of type $\mathbb{A}$, then $\Symp_h(\widetilde{M}_{\B\delta},\om_{\B\delta})$ is connected.
\item If $\B\delta$ is of type $\mathbb{D}_k$, then $\Symp_h(\widetilde{M}_{\B\delta},\om_{\B\delta})$ is isomorphic to the pure braid group $\PB_k(S^2)$.
\end{itemize}
\end{theorem}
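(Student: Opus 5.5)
Since this result is quoted from \cite{LLW-Torelli}, we only sketch how we would reprove it with the tools of this paper. The idea is to \emph{not} study $\Symp_h(\widetilde{M}_{\B\delta},\om_{\B\delta})$ directly. Instead we let it act on a space of symplectic configurations of embedded spheres and compute $\pi_0$ from the resulting fibration.

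\textbf{Step 1: choose the configuration.} We fix a $J$-holomorphic configuration $\mathcal{C}$ adapted to the type of $\B\delta$.
\begin{itemize}
\item For type $\mathbb{D}_k$ we take the exceptional spheres $E_1,\dots,E_n$ together with the rational curve in class $L-E_1$. Its area is $1-a$, and it meets the $k$ exceptional spheres $E_2,\dots,E_{k+1}$ of equal area $(1-a)/2$.
\item For type $\mathbb{A}$ we take the standard stable configuration built from $E_1,\dots,E_n$ and $L-E_1-E_2$.
\end{itemize}
Because $\om_{\B\delta}\cdot K_n<0$, the Gromov--Taubes arguments used in Lemmas~\ref{lemma:existence fiber classes} and~\ref{lemma:indecomposability} give the following. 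The space $\cJ_{\mathcal{C}}$ of tame $J$ for which these classes have embedded representatives is open and dense in $\cJ(\om_{\B\delta})$. Its complement is stratified by the $(-2)$-classes of zero or negative area; this is the analogue of Lemma~\ref{L:strata}.

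\textbf{Step 2: connectedness of the configuration space.} Acting by $\Symp_h$ on the space of configurations in the class of $\mathcal{C}$ gives a homotopy fibration
\[
\Stab(\mathcal{C})\to\Symp_h(\widetilde{M}_{\B\delta},\om_{\B\delta})\to\cJ_{\mathcal{C}} .
\]
We show $\cJ_{\mathcal{C}}$ is connected. The removed strata have codimension $2-2c_1(A)\geq2$, and Lemma~\ref{L:eells} together with the reduced-chamber condition shows that no stratum of codimension $\le 1$ appears. Hence $\pi_0\Symp_h\cong\pi_0\Stab(\mathcal{C})/\mathrm{im}\,\pi_1(\cJ_{\mathcal{C}})$.

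\textbf{Step 3: compute $\pi_0\Stab(\mathcal{C})$.} By Gompf--Evans type arguments, the stabilizer retracts onto the symplectomorphisms of the curves in $\mathcal{C}$ that fix their intersection points; the complement of $\mathcal{C}$ contributes nothing to $\pi_0$. For the sphere $L-E_1$ in type $\mathbb{D}_k$, this leaves the diffeomorphisms of $S^2$ fixing $k$ marked points. This is where $\PB_k(S^2)$ appears, as $\pi_0\Symp(S^2,k\text{ points})$ up to its centre.

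\textbf{Step 4: the image of $\pi_1(\cJ_{\mathcal{C}})$ (main obstacle).} By the Eells duality argument of Lemma~\ref{L:eells}, the meridians around the codimension-$2$ strata of $(-2)$-classes generate $\pi_1(\cJ_{\mathcal{C}})$, and they map to Dehn twists along Lagrangian spheres in the root classes of zero area.
\begin{itemize}
\item In type $\mathbb{A}$, we would show these twists generate all braid moves of the marked points, so the quotient is trivial. The first attempt is an explicit check on the generators $E_i-E_j$ and $L-E_i-E_j-E_k$.
\item In type $\mathbb{D}_k$, the equal-area pattern prevents the relevant strata from occurring. We would then need to show that the map $\pi_0\Stab(\mathcal{C})\to\pi_0\Symp_h$ is injective, for instance by detecting the braid through its action on the isotopy classes of exceptional curves in $\mathcal{C}$.
\end{itemize}
Controlling these meridians precisely enough to prove injectivity is the hardest part, and there we would rely on the detailed analysis in \cite{LLW-Torelli}.
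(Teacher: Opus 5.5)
The paper does not prove this statement. It is quoted from \cite{LLW-Torelli} and used only as a black box in Examples~\ref{ex:non simply connected 1} and~\ref{ex:non simply connected 2}. Your overall framework is a reasonable kind of strategy: a stabilizer--configuration fibration, followed by a quotient of $\pi_0\Stab(\mathcal C)$ by meridians. But your Step~4 defers the decisive computation to \cite{LLW-Torelli}, so the text is at best an outline, and several concrete steps fail as written.

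\emph{Step 1, the $\mathbb{D}_k$ configuration.} It does not exist. Since $(L-E_1)\cdot E_i=0$ for $i\geq 2$, positivity of intersections makes a $J$-sphere in class $L-E_1$ disjoint from the $J$-spheres $E_2,\dots,E_{k+1}$. Since $(L-E_1)^2=0$, its representatives sweep out a ruling rather than forming a rigid piece of a configuration. So there are no $k$ marked points on it. The relevant geometry is the ruling by $F=L-E_1$ with $k$ singular fibers $E_i\cup(F-E_i)$, $2\le i\le k+1$, both components of area $(1-a)/2$. The zero-area roots $E_i-E_j$ and $F-E_i-E_j$ form the $D_k$ system that gives the type its name. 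The $k$ special points sit on the base of the ruling, equivalently on the section $E_1$, which meets each $F-E_i$ once. Also, Lemma~\ref{lemma:indecomposability} needs equal capacities below $1/3$ and does not apply here.

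\emph{Step 3.} This is oversimplified in two ways. $\Stab(\mathcal C)$ also involves the symplectic gauge group of the normal bundles, which in general has nontrivial $\pi_0$. It also involves the compactly supported symplectomorphisms of the complement, which contribute nothing only if the configuration is chosen so the complement has contractible $\Symp_c$ (e.g.\ a ball, by Gromov). For your configurations (all $E_i$ plus one extra sphere), the complement is, after blowing down, a ball with several smaller balls removed. Its $\Symp_c$ need not be connected, since rotations of the removed balls and ball-swapping maps live there. Even granting Step~3, $\pi_0$ of symplectomorphisms of $S^2$ fixing $k$ points is $\PB_k(S^2)$ modulo its central $\Z/2$. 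So your computation does not yield the group in the statement without a further argument.

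\emph{Step 4.} The mechanism is inverted. In the fibration $\Stab(\mathcal C)\to\Symp_h(\widetilde{M}_{\B\delta},\om_{\B\delta})\to\cJ_{\mathcal C}$ the form $\om_{\B\delta}$ is fixed, and no $\om_{\B\delta}$-tame $J$ has a holomorphic curve in a class of zero or negative area. Hence, contrary to Step~1, the complement of $\cJ_{\mathcal C}$ is stratified by classes of square at most $-2$ and \emph{positive} area. The Lagrangian root classes carry no stratum at all. By exactness of $\pi_1(\cJ_{\mathcal C})\to\pi_0\Stab(\mathcal C)\to\pi_0\Symp_h$, the meridians of the positive-area $(-2)$-strata map to relations, i.e.\ to elements that die in $\pi_0\Symp_h$. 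They do not map to Lagrangian Dehn twists, which are elements of $\pi_0\Symp_h$ itself. For a generic type-$\mathbb{A}$ class there are no Lagrangian spheres at all, yet meridians exist. Also, Eells duality as in Lemma~\ref{L:eells} only gives cohomological information; normal generation of $\pi_1(\cJ_{\mathcal C})$ by meridians needs a separate transversality argument in the contractible space of tame structures.

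The real content of the theorem is computing, for a correctly chosen configuration, the normal subgroup of $\pi_0\Stab(\mathcal C)$ generated by the meridian images. In type $\mathbb{A}$ one must show it is everything. In type $\mathbb{D}_k$ one must identify the quotient, where the $D_k$ roots have zero area and so impose no relations. Your proposal does neither, so the missing step is exactly what \cite{LLW-Torelli} supplies.
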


\begin{example}\label{ex:non simply connected 1}
Let $\B\delta$ be a set of type $\mathbb{D}_k$ in $\cC_n$, $n\geq 5$, and suppose $\om_{\B\delta}\cdot K_n<0$. The fibration~\eqref{eq:fibration B} yields
\[
\pi_0 \Symp(\B\delta, \Sigma) \to \pi_0\Symp_h(\widetilde{M}_{\B\delta}, \om_{\B\delta})\simeq \PB_k(S^2) \to \pi_0\cC([\Sigma])=\{1\}
\]
showing that $\pi_0 \Symp(\B\delta, \Sigma)$ surjects onto $\PB_k(S^2)$. Then, the fibration~\eqref{eq:fibration A} gives the exact sequence
\[
\cdots\to \pi_1\PU(3)\simeq \Z_3 \to \pi_1 \IEmb(\B\delta,\cp^2) \to \pi_0 \Symp(\B\delta, \Sigma) \to \{1\}
\]
which implies that there exists a surjection $\pi_1 \IEmb(\B\delta,\cp^2)\to \PB_k(S^2)$.\eoe
\end{example}

In the case $5\leq n\leq 9$, stability within chambers allows for a refinement of the previous example.

\begin{example}\label{ex:non simply connected 2}
For $\epsilon>0$ small enough, consider the set of type $\mathbb{D}_4$ in $\cC_9$ given by
\[
\B\delta_\epsilon := \big(1/3+2\epsilon, 1/3-\epsilon, 1/3-\epsilon, 1/3-\epsilon, 1/3-\epsilon, 1/3-2\epsilon, 1/3-3\epsilon, 1/3-4\epsilon, 1/3-5\epsilon \big).
\]
By Example~\ref{ex:non simply connected 1}, we know that the embedding space $\IEmb(\B\delta_\epsilon,\cp^2)$ is not simply connected. 

By construction, $\B\delta_\epsilon$ belongs to a facet of some stability chamber. Note that for $\epsilon$ sufficiently small, $\B\delta_\epsilon$ is close to the monotone point. in particular, we can assume the areas of all the negative classes with $c_1(A)<0$ listed in Lemma~\ref{lemma:NegativeClasses_9balls} are strictly negative. Moreover, looking at the $(-2)$-classes of the list (with $c_1(A)=0$), it is easy to see that we can choose $\epsilon$ so small that the only $(-2)$-classes that have zero area with respect to $\B\delta_\epsilon$ are of the form $L-E_1-E_j-E_k$, $j\neq k$, $j,k\in\{2,3,4,5\}$. We can then perturb $\B\delta_\epsilon$ by choosing some $\eta>0$ so that the set
\[
\B\delta_{\epsilon,\eta} := \big(1/3+2\epsilon+\eta, 1/3-\epsilon, 1/3-\epsilon, 1/3-\epsilon, 1/3-\epsilon, 1/3-2\epsilon, 1/3-3\epsilon, 1/3-4\epsilon, 1/3-5\epsilon \big)
\]
belongs to the interior of the chamber $\Delta_{\epsilon,\eta}$ characterized by the negativity of the six classes $L-E_1-E_j-E_k$ described above, and by the positivity of all other $(-2)$-classes.

We now describe a walk starting at the reduced chamber, where  $\IEmb(\B\delta,\cp^2)$ is simply connected, to the chamber $\Delta_{\epsilon,\eta}$ where the fundamental group of $\IEmb(\B\delta,\cp^2)$ is non-trivial. We use the notation $L_{ijk}:= L - E_i-E_j-E_k$. 

We start in the reduced chamber, represented in the diagram below. Recall that $r_0= L_{123}$ and $r_i= E_i -E_{i-1}$, with $1 \leq i \leq 8$ and all $(-2)$-curves are positive  in the reduced chamber. 
\begin{center}
\begin{tikzpicture}[
    every node/.style={draw, circle, thick, inner sep=0pt, minimum size=2mm, fill=white},
    thick
]
    \def\dx{1.5}   
    \def\dy{1.5}   

    \foreach \i in {1,...,8}
        \node (n\i) at (\i*\dx, 0) {};

    \node (n0) at (3*\dx, \dy) {};
    
    \foreach \i in {1,...,7} {
        \pgfmathtruncatemacro{\j}{\i+1}
        \draw (n\i) -- (n\j);
    }

    \draw (n3) -- (n0);

    \foreach \i in {1,...,8}
        \node[draw=none, below=2pt] at (n\i.south) {$r_\i$};
    \node[draw=none, right=2pt] at (n0.east) {$r_0 = L_{123}$};
    \fill (n0) circle (1mm);

\end{tikzpicture}
\end{center}
Then we perform a reflection across the wall corresponding to $L_{123}$ (which we represent by a solid dot), so this curve becomes negative in the next chamber and the node $r_3$ is replaced by $r_0+r_3= L_{124}$:

\begin{center}
\begin{tikzpicture}[
    every node/.style={draw, circle, thick, inner sep=0pt, minimum size=2mm, fill=white},
    thick
]
    \def\dx{1.5}   
    \def\dy{1.5}   
    \def\ly{-0.5}  

    \node[draw=none, anchor=base] at (1*\dx, \ly) {$r_1$};
    \node[draw=none, anchor=base] at (2*\dx, \ly) {$r_2$};
    \node[draw=none, anchor=base] at (3*\dx, \ly) {$L_{124}$};
    \node[draw=none, anchor=base] at (4*\dx, \ly) {$r_4$};
    \node[draw=none, anchor=base] at (5*\dx, \ly) {$r_5$};
    \foreach \i in {6,...,8}
        \node[draw=none, anchor=base] at (\i*\dx, \ly) {$r_\i$};
    \node[draw=none, right=2pt] at (n0.east) {$-L_{123}$};
    
    \foreach \i in {1,...,8}
        \node (n\i) at (\i*\dx, 0) {};
    \node (n0) at (3*\dx, \dy) {};
    \foreach \i in {1,...,7} {
        \pgfmathtruncatemacro{\j}{\i+1}
        \draw (n\i) -- (n\j);
    }
    \draw (n3) -- (n0);
\fill (n3) circle (1mm);
\end{tikzpicture}
\end{center}

The next step is to reflect across the wall corresponding to $L_{124}$, so the nodes $r_2$, $r_4$ and $-r_0=-L_{123}$ are replaced by $r_0+r_2+r_3 = L_{134}$, $r_0+r_3+r_4=L_{125}$ and $r_3$, respectively: 

\begin{center}
\begin{tikzpicture}[
    every node/.style={draw, circle, thick, inner sep=0pt, minimum size=2mm, fill=white},
    thick
]
    \def\dx{1.5}   
    \def\dy{1.5}   
    \def\ly{-0.5}  

    \node[draw=none, anchor=base] at (1*\dx, \ly) {$r_1$};
    \node[draw=none, anchor=base] at (2*\dx, \ly) {$L_{134}$};
    \node[draw=none, anchor=base] at (3*\dx, \ly) {$-L_{124}$};
    \node[draw=none, anchor=base] at (4*\dx, \ly) {$L_{125}$};
    \node[draw=none, anchor=base] at (5*\dx, \ly) {$r_5$};
    \foreach \i in {6,...,8}
        \node[draw=none, anchor=base] at (\i*\dx, \ly) {$r_\i$};
    \node[draw=none, right=2pt] at (n0.east) {$r_3$};
    
    \foreach \i in {1,...,8}
        \node (n\i) at (\i*\dx, 0) {};
    \node (n0) at (3*\dx, \dy) {};
    \foreach \i in {1,...,7} {
        \pgfmathtruncatemacro{\j}{\i+1}
        \draw (n\i) -- (n\j);
    }
    \draw (n3) -- (n0);
\fill (n2) circle (1mm);
\end{tikzpicture}
\end{center}

We proceed similarly by reflecting across the wall corresponding to the solid node in each diagram, which yields the following sequence of diagrams.

\begin{center}
\begin{tikzpicture}[
    every node/.style={draw, circle, thick, inner sep=0pt, minimum size=2mm, fill=white},
    thick
]
    \def\dx{1.5}   
    \def\dy{1.5}   
    \def\ly{-0.5}  

    \node[draw=none, anchor=base] at (1*\dx, \ly) {$L_{234}$};
    \node[draw=none, anchor=base] at (2*\dx, \ly) {$-L_{134}$};
    \node[draw=none, anchor=base] at (3*\dx, \ly) {$r_2$};
    \node[draw=none, anchor=base] at (4*\dx, \ly) {$L_{125}$};
    \node[draw=none, anchor=base] at (5*\dx, \ly) {$r_5$};
    \foreach \i in {6,...,8}
        \node[draw=none, anchor=base] at (\i*\dx, \ly) {$r_\i$};
    \node[draw=none, right=2pt] at (n0.east) {$r_3$};
    
    \foreach \i in {1,...,8}
        \node (n\i) at (\i*\dx, 0) {};
    \node (n0) at (3*\dx, \dy) {};
    \foreach \i in {1,...,7} {
        \pgfmathtruncatemacro{\j}{\i+1}
        \draw (n\i) -- (n\j);
    }
    \draw (n3) -- (n0);
\fill (n4) circle (1mm);
\end{tikzpicture}
\end{center}

\begin{center}
\begin{tikzpicture}[
    every node/.style={draw, circle, thick, inner sep=0pt, minimum size=2mm, fill=white},
    thick
]
    \def\dx{1.5}   
    \def\dy{1.5}   
    \def\ly{-0.5}  

    \node[draw=none, anchor=base] at (1*\dx, \ly) {$L_{234}$};
    \node[draw=none, anchor=base] at (2*\dx, \ly) {$-L_{134}$};
    \node[draw=none, anchor=base] at (3*\dx, \ly) {$L_{135}$};
    \node[draw=none, anchor=base] at (4*\dx, \ly) {$-L_{125}$};
    \node[draw=none, anchor=base] at (5*\dx, \ly) {$L_{126}$};
    \foreach \i in {6,...,8}
        \node[draw=none, anchor=base] at (\i*\dx, \ly) {$r_\i$};
    \node[draw=none, right=2pt] at (n0.east) {$r_3$};
    
    \foreach \i in {1,...,8}
        \node (n\i) at (\i*\dx, 0) {};
    \node (n0) at (3*\dx, \dy) {};
    \foreach \i in {1,...,7} {
        \pgfmathtruncatemacro{\j}{\i+1}
        \draw (n\i) -- (n\j);
    }
    \draw (n3) -- (n0);
\fill (n3) circle (1mm);
\end{tikzpicture}
\end{center}

\begin{center}
\begin{tikzpicture}[
    every node/.style={draw, circle, thick, inner sep=0pt, minimum size=2mm, fill=white},
    thick
]
    \def\dx{1.5}   
    \def\dy{1.5}   
    \def\ly{-0.5}  

    \node[draw=none, anchor=base] at (1*\dx, \ly) {$L_{234}$};
    \node[draw=none, anchor=base] at (2*\dx, \ly) {$r_4$};
    \node[draw=none, anchor=base] at (3*\dx, \ly) {$-L_{135}$};
    \node[draw=none, anchor=base] at (4*\dx, \ly) {$r_2$};
    \node[draw=none, anchor=base] at (5*\dx, \ly) {$L_{126}$};
    \foreach \i in {6,...,8}
        \node[draw=none, anchor=base] at (\i*\dx, \ly) {$r_\i$};
    \node[draw=none, right=2pt] at (n0.east) {$L_{145}$};
    
    \foreach \i in {1,...,8}
        \node (n\i) at (\i*\dx, 0) {};
    \node (n0) at (3*\dx, \dy) {};
    \foreach \i in {1,...,7} {
        \pgfmathtruncatemacro{\j}{\i+1}
        \draw (n\i) -- (n\j);
    }
    \draw (n3) -- (n0);
\fill (n0) circle (1mm);
\end{tikzpicture}
\end{center}

\begin{center}
\begin{tikzpicture}[
    every node/.style={draw, circle, thick, inner sep=0pt, minimum size=2mm, fill=white},
    thick
]
    \def\dx{1.5}   
    \def\dy{1.5}   
    \def\ly{-0.5}  
    \foreach \i in {1,...,8}
        \node (n\i) at (\i*\dx, 0) {};
    \node (n0) at (3*\dx, \dy) {};
    \foreach \i in {1,...,7} {
        \pgfmathtruncatemacro{\j}{\i+1}
        \draw (n\i) -- (n\j);
    }
    \draw (n3) -- (n0);
    \node[draw=none, anchor=base] at (1*\dx, \ly) {$L_{234}$};
    \node[draw=none, anchor=base] at (2*\dx, \ly) {$r_4$};
    \node[draw=none, anchor=base] at (3*\dx, \ly) {$r_3$};
    \node[draw=none, anchor=base] at (4*\dx, \ly) {$r_2$};
    \node[draw=none, anchor=base] at (5*\dx, \ly) {$L_{126}$};
    \foreach \i in {6,...,8}
        \node[draw=none, anchor=base] at (\i*\dx, \ly) {$r_\i$};
    \node[draw=none, right=2pt] at (n0.east) {$-L_{145}$};
    \draw (n1) circle (1mm);
    \draw (n5) circle (1mm);
\end{tikzpicture}
\end{center}

Finally we arrive at the chamber $\Delta_{\epsilon,\eta}$. As explained above $\B{\delta_{\epsilon,\eta}}$ belongs to this chamber and therefore $\IEmb(\B{\delta_{\epsilon,\eta}}, \cp^2)$ is not simply connected.\eoe
\end{example}

\begin{remark}
Note that Example~\ref{ex:non simply connected 2} applies to any $5\leq n\leq 9$ (and to all $n\geq 5$ if we assume stability holds), showing that $\IEmb(\B\delta,\cp^2)$ is not simply connected for suitable choices of capacities. This is in sharp contrast with the cases $1\leq n\leq 4$ for which $\IEmb(\B\delta,\cp^2)$ is simply connected for all choices of admissible capacities, see~\cite{AKP2024}.\eoe
\end{remark}

\appendix
\section{Blow-ups and embeddings}\label{Appendix}
The purpose of this section is to provide the necessary details to complete the proof of Lemma \ref{L:eells}. In order to do that, we need to recall and introduce some notation. In what follows $(M,\omega)$ is a closed rational $4$-manifold. Unless indicated otherwise, we only consider diffeomorphisms acting trivially on homology. This is indicated by the subscript $h$. For example, $\Diff_h$ denotes diffeomorphisms acting trivially on homology; similarly, $\Symp_h$ denotes such a group of symplectomorphisms. Given a topological pair $(X,Y)$ we write $\rel Y$ for objects on $X$ that are standard near $Y$. For instance, $\Diff_h(X,\, \rel Y)$ is the subgroup of diffeomorphisms that are the identity near $Y$, while $\Omega(X,\,\rel Y)$ denotes the space of symplectic forms that are equal to a standard one near $Y$.

To simplify the discussion, we consider embeddings of a single ball. The generalization to $n$ disjoint balls is immediate. Given an admissible capacity $\delta$, the connectedness of $\Emb(\delta,M)$~\cite[Corollary 1.5]{McDuff-DeformationsToIsotopy} implies that the natural action on $\Emb(\delta,M)$ of the group $\Symp_h(M, \omega)$ is transitive, and that evaluation of this action at an embedding $\phi$ defines a fibration
\[
\Stab_h(\phi)\to\Symp_h(M,\om)\to\Emb(\delta,M).
\]
where $\Stab_h(\phi)$ is the group of symplectomorphisms that are the identity on the image $\phi(B^4(\delta))$.

\begin{proposition}[{\cite[Proposition A.11]{C-P-Memoirs}}]\label{prop:equivalence stab and rel B}
For any symplectic embedding 
\[
\phi:B^4(\delta)\into (M,\om),
\]
there is a homotopy equivalence
\[
\Symp_h\Big(M,\, \rel \phi\big(B^4(\delta)\big)\Big) \into \Stab_h(\phi),
\]
where the former denotes the subgroup of symplectomorphisms $\psi$ which are equal to the identity on a neighborhood (varying with $\psi$) of the image $\phi(B^4(\delta))$.
\end{proposition}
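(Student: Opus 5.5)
The plan is to compare $\Stab_h(\phi)$ and $\Symp_h(M,\rel \phi(B^4(\delta)))$ through an intermediate tower of subgroups, each shown to include as a homotopy equivalence. Let $B=\phi(B^4(\delta))$. For $0<r<1$ slightly larger than $1$, let $B_r$ denote the image under $\phi$ (extended slightly, which we may do after shrinking or by working with germs) of the concentric ball of capacity $r\delta$. Then $\Symp_h(M,\rel B)=\bigcup_{r>1}\Symp_h(M,\rel B_r)$ is an increasing union, and it is enough to prove that $\Symp_h(M,\rel B)\into \Stab_h(\phi)$ induces isomorphisms on all homotopy groups. By Milnor's theorem both groups have the homotopy type of CW complexes, so Whitehead's theorem then gives a homotopy equivalence.

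The key step is a parametric ``straightening near the boundary'' construction. Take a compact family $\psi_s\in\Stab_h(\phi)$, $s\in S^k$ (or $D^k$ relative to a boundary already in $\Symp_h(M,\rel B)$). Each $\psi_s$ is the identity on $B$, so its $C^\infty$ jet along $\partial B$ need not be trivial, but its restriction to a collar $U$ of $\partial B$ outside $B$ is $C^\infty$-close to the identity on a thinner collar, uniformly in $s$ by compactness. On a small neighborhood $\psi_s$ is therefore the graph of a closed $1$-form in a Weinstein chart of the diagonal, and hence is given by a generating function $f_s$ vanishing to infinite order along $\partial B$. Fix a cutoff $\rho$ equal to $0$ near $\partial B$ and $1$ outside a thin collar. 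Then the symplectomorphisms $\theta_{s,t}$ generated by $(1-t+t\rho)f_s$ form an isotopy from $\psi_s|_U$ to a map that is the identity near $\partial B$. We extend by $\psi_s$ away from the collar and by the identity on $B$. Concatenating $\psi_s$ with the composite $\psi_s\circ\theta_{s,t}^{-1}\circ\psi_s|\ldots$ in a way that keeps $B$ fixed, the result is a deformation of the family inside $\Stab_h(\phi)$ ending in $\Symp_h(M,\rel B_r)$ for some $r>1$. The deformation is canonical, so it is stationary on any part of the family already lying in $\Symp_h(M,\rel B)$. This proves surjectivity on $\pi_k$ and, using the relative version, injectivity.

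The main obstacle is making the generating-function construction uniform and correct. Two issues need care. First, the smallness of $\psi_s-\mathrm{id}$ only holds on a sufficiently thin collar, which forces the thin collar to depend on the family; compactness is what makes this work. Second, the cut-off graph must remain a Lagrangian graph, i.e.\ a symplectomorphism. For this I would use the infinite-order vanishing: it makes $\rho f_s$ $C^1$-small on the collar, so the cut-off graph is still a graph. If the direct generating-function approach proves awkward, a fallback is to argue via the fibration of $\Stab_h(\phi)$ over the space of germs along $\partial B$ of symplectomorphisms fixing $B$, and to show directly that this space of germs is contractible by the same cut-off homotopy applied at the germ level.
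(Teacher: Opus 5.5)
The paper gives no argument of its own for this statement; it imports it from \cite[Proposition A.11]{C-P-Memoirs}. Your direct parametric argument is a valid route and is correct in substance. With $B=\phi(B^4(\delta))$, you cut off the generating function of $\psi_s$ on a thin collar of $\partial B$, extend by $\psi_s$ outside and by the identity on $B$, and conclude that $\pi_k\big(\Stab_h(\phi),\Symp_h(M,\rel B)\big)=0$ for all $k$.

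The more common packaging is your fallback. Restriction to germs along $B$ gives a fibration $\Symp_h(M,\rel B)\to\Stab_h(\phi)\to\mathcal{G}$, and the germ space $\mathcal{G}$ contracts via $t\mapsto$ (germ generated by $tf$). Both routes rest on the same analytic input: flatness of $\psi-\mathrm{id}$ along $\partial B$ together with a cut-off estimate. The germ route is more modular, but it forces you to prove the fibration property and to handle the topology on germs; your route avoids both.

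Your route can also be upgraded to an honest homotopy inverse. Call a width $\epsilon$ admissible for $\psi$ if every width $\le\epsilon$ gives a valid cut-off. Flatness gives a threshold that is uniform on a neighbourhood of each $\psi$. A partition of unity on the metrizable space $\Stab_h(\phi)$ then yields a continuous admissible width $\epsilon(\psi)$. This gives a map $r\colon\Stab_h(\phi)\to\Symp_h(M,\rel B)$ with $i\circ r\simeq\mathrm{id}$ and $r\circ i\simeq\mathrm{id}$, both via the same cut-off homotopy.

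Several statements in your write-up need correcting.

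\emph{Jets along $\partial B$.} The claim that the $C^\infty$-jet of $\psi_s$ along $\partial B$ need not be trivial is false, and it contradicts your later use of infinite-order vanishing. Since $\psi_s=\mathrm{id}$ on the interior of $B$ and its derivatives are continuous, $\psi_s-\mathrm{id}$ is flat along $\partial B$. Flatness, not compactness, is what makes $\psi_s$ uniformly $C^k$-close to the identity on a thin collar; compactness only makes the Taylor constants uniform in $s$. A map that merely fixes $\partial B$ can have a nontrivial shear $1$-jet there, and for such a map the cut-off estimate breaks down.

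\emph{Order of smallness.} For the cut-off Lagrangian to be the graph of a diffeomorphism you need $d(\rho f_s)$ to be $C^1$-small, i.e.\ $\rho f_s$ must be $C^2$-small, not merely $C^1$-small. Since $|D^j\rho_\epsilon|\sim\epsilon^{-j}$, you need the flatness estimate $|D^jf_s(x)|\le C_N\,\mathrm{dist}(x,\partial B)^{N-j}$. This gives $\|\rho_\epsilon f_s\|_{C^2}=O(\epsilon^{N-2})$ uniformly in $s$.

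\emph{Globalization.} No concatenation is needed. Define $\Theta_{s,t}$ to be $\theta_{s,t}$ on $B\cup U$ and $\psi_s$ elsewhere. It is a symplectic local diffeomorphism of the closed manifold $M$, isotopic to $\psi_s$, hence a degree-one covering and so a diffeomorphism. It is already the required path in $\Stab_h(\phi)$.

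\emph{Relative part.} The deformation is not stationary on the part of the family already in $\Symp_h(M,\rel B)$. If $\psi_s=\mathrm{id}$ only on a neighbourhood $V_s$ thinner than the cut-off collar, then $(1-t+t\rho)f_s\neq f_s$. However, $f_s|_{V_s}=0$ forces $\Theta_{s,t}=\mathrm{id}$ on $V_s$ for all $t$, and that is all the relative argument needs.

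\emph{CW type.} $\Symp_h(M,\rel B)$ is not a closed subgroup, so the appeal to Milnor needs a justification. It is metrizable and locally modelled on the non-complete locally convex space of closed $1$-forms vanishing near $B$, hence an ANR by Dugundji and Hanner. Alternatively, the continuous-width construction above avoids the issue. In any case, the paper only uses long exact sequences of homotopy groups, so a weak equivalence suffices there.
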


Fix a symplectic embedding $\bar{\phi}:B^4(r)\into (M,\om)$, $r>\delta$, extending $\phi$. This extension allows for the definitions of objects that are standard near the image $\phi(B^4(\delta))$. In particular, let $\Omega\Big(\om,\, \rel \phi\big(B^4(\delta)\big)\Big)$ denote the connected component of $\om$ in the space of symplectic forms on $M$ which are cohomologous to $\om$ and which are standard near $\phi(B^4(\delta))$. Similarly, let $\mathcal{P}\Big(\om,\, \rel \phi\big(B^4(\delta)\big)\Big)$ denote the space of tame pairs $(\om,J)$ where $\om\in \Omega\Big(\om,\, \rel \phi\big(B^4(\delta)\big)\Big)$ and where $J$ is standard near the image of $\phi$. Finally, let $\mathcal{A}\Big(\om,\, \rel \phi\big(B^4(\delta)\big)\Big)$ denote the set of almost complex structures belonging to such a pair. The two projections 
\[
\mathcal{A}\Big(\om,\, \rel \phi\big(B^4(\delta)\big)\Big) \xleftarrow{\pi_{\cA}} \mathcal{P}\Big(\om,\, \rel \phi\big(B^4(\delta)\big)\Big) \xrightarrow{\pi_{\Omega}} \Omega\Big(\om,\, \rel \phi\big(B^4(\delta)\big)\Big)
\]
are $\Diff_h\Big(M,\, \rel \phi\big(B^4(\delta)\big)\Big)$-equivariant homotopy equivalences. The stabilizer subgroup $\Symp_h\Big(M,\, \rel \phi\big(B^4(\delta)\big)\Big)$ can be investigated using the homotopy commutative diagram
\begin{equation}\label{eq:basic diagram rel to balls}
\begin{tikzcd}
\Symp_h\Big(M,\, \rel \phi\big(B^4(\delta)\big)\Big)\ar[r] & \Diff_h\Big(M,\, \rel \phi\big(B^4(\delta)\big)\Big)\ar[r]\ar[d,equal] & \Omega\Big(\om,\, \rel \phi\big(B^4(\delta)\big)\Big)\\
\hofib_\mathcal{P}(\delta) \ar[r]\ar[u, swap, "\simeq"] \ar[d, "\simeq"] & \Diff_h\Big(M,\, \rel \phi\big(B^4(\delta)\big)\Big)\ar[r]\ar[d,equal] & \mathcal{P}\Big(\om,\, \rel \phi\big(B^4(\delta)\big)\Big) \ar[u, "\pi_\Omega" right, "\simeq" left]\ar[d, "\pi_\mathcal{A}" right, "\simeq" left]\\
\hofib_\mathcal{A}(\delta) \ar[r] & \Diff_h\Big(M,\, \rel \phi\big(B^4(\delta)\big)\Big)\ar[r] & \mathcal{A}\Big(\om,\, \rel \phi\big(B^4(\delta)\big)\Big)
\end{tikzcd}
\end{equation}
in which the top row is a genuine fibration, and $\hofib_\mathcal{P}$ and  $\hofib_\mathcal{A}$ are the homotopy fibers of the $\Diff_h$ action on the base points. 

Recall that the symplectic blow-up of size $\delta$ along $\phi$ is defined as
\begin{equation}\label{eq:symplectic blow-up size delta}
\widetilde{M}_\delta:= \Big(M \setminus \phi\big(\mathring{B}^4(\delta)\big)\Big) \Big/ \sim
\end{equation}
where the boundary $S^3$ is quotiented along the fibers of the Hopf fibration $S^1\to S^3\to S^2$. The symplectic form on $M\setminus \phi\big(B^4(\delta)\big)$ extends smoothly to a symplectic form $\tom_\delta$ on $\widetilde{M}_\delta$ for which the exceptional divisor $\Sigma:=S^3/\sim$ has symplectic area $\delta$, see~\cite[Section 7.1]{McD-S-SymplecticTopology}. 

As any tamed pair $(\om,J)$ which is standard near $\phi(B^4(\delta))$ extends canonically to a pair $(\tom, \tJ)$ on $\widetilde{M}_\delta$ which is standard near $\Sigma$, the blow-up and blow-down constructions define canonical homeomorphisms
\begin{gather*}
\Omega\Big(\om,\, \rel \phi\big(B^4(\delta)\big)\Big) \cong \Omega\big(\tom_\delta,\, \rel\Sigma\big), \qquad \mathcal{P}\Big(\om,\, \rel \phi\big(B^4(\delta)\big)\Big) \cong \mathcal{P}\big(\tom_\delta,\, \rel\Sigma\big),\\
\mathcal{A}\Big(\om,\, \rel \phi\big(B^4(\delta)\big)\Big) \cong \mathcal{A}\big(\tom_\delta,\, \rel\Sigma\big).
\end{gather*}
Similarly, there is a canonical homeomorphism
\[
\Diff_h\Big(M,\, \rel \phi\big(B^4(\delta)\big)\Big) \cong \Diff_h\Big(\widetilde{M}_\delta,\, \rel \Sigma \Big)
\]
so that the symplectic blow-up construction applied to the diagram~\eqref{eq:basic diagram rel to balls} yields the diagram 
\begin{equation}\label{eq:basic diagram rel to divisors}
\begin{tikzcd}
\Symp_h(\widetilde{M}_\delta,\, \rel \Sigma)\ar[r] & \Diff_h(\widetilde{M}_\delta,\, \rel \Sigma)\ar[r]\ar[d,equal] & \Omega(\tom_\delta,\, \rel\Sigma)\\
\hofib_\mathcal{P}(\delta) \ar[r]\ar[u, swap, "\simeq"] \ar[d, "\simeq"] & \Diff_h(\widetilde{M}_\delta,\, \rel \Sigma)\ar[r]\ar[d,equal] & \mathcal{P}(\tom_\delta,\, \rel\Sigma)\ar[u, "\pi_\Omega" right, "\simeq" left]\ar[d, "\pi_\mathcal{A}" right, "\simeq" left]\\
\hofib_\mathcal{A}(\delta) \ar[r] & \Diff_h(\widetilde{M}_\delta,\, \rel \Sigma)\ar[r] & \mathcal{A}(\tom_\delta,\, \rel\Sigma)
\end{tikzcd}
\end{equation}

Given any pair of capacities $0<\delta<\delta'<r$, there are injections
\begin{gather*}
\mathcal{P}\Big(\om,\, \rel \phi\big(B^4(\delta')\big)\Big)\into \mathcal{P}\Big(\om,\, \rel \phi\big(B^4(\delta)\big)\Big)\\
\Diff_h\Big(M,\, \rel \phi\big(B^4(\delta')\big)\Big)\into \Diff_h\Big(M,\, \rel \phi\big(B^4(\delta)\big)\Big)
\end{gather*}
Passing to the blow-ups yields injective maps
\begin{gather*}
\mathcal{P}\big(\tom_{\delta'},\, \rel \Sigma\big)\into \mathcal{P}\big(\tom_\delta,\, \rel \Sigma\big)\\
\Diff_h\big(\widetilde{M}_{\delta'},\, \rel \Sigma\big)\into \Diff_h\big(\widetilde{M}_\delta,\, \rel \Sigma\big)
\end{gather*}
defined by extending symplectic forms and almost complex structures by standard ones, and by extending diffeomorphisms by the identity. This gives a commutative diagram
\[
\begin{tikzcd}[cramped, row sep=tiny, column sep=tiny]
\Diff_h(\widetilde{M}_{\delta'}, \rel \Sigma) & & \Diff_h(\widetilde{M}_{\delta}, \rel \Sigma) &\\
& \Omega(\tom_{\delta'},\rel \Sigma) & & \Omega(\tom_\delta,\rel \Sigma)\\
\Diff_h(\widetilde{M}_{\delta'}, \rel \Sigma) & & \Diff_h(\widetilde{M}_{\delta}, \rel \Sigma) &\\
& \mathcal{P}(\tom_{\delta'},\rel \Sigma) & & \mathcal{P}(\tom_\delta,\rel \Sigma)\\
\Diff_h(\widetilde{M}_{\delta'}, \rel \Sigma) & & \Diff_h(\widetilde{M}_{\delta}, \rel \Sigma) &\\
& \mathcal{A}(\tom_{\delta'},\rel \Sigma) & & \mathcal{A}(\tom_\delta,\rel \Sigma)
\arrow[from=1-1,to=1-3, hookrightarrow]
\arrow[from=3-1,to=3-3, hookrightarrow]
\arrow[from=5-1,to=5-3, hookrightarrow]
\arrow[from=1-1,to=2-2] 
\arrow[from=3-1,to=4-2]
\arrow[from=5-1,to=6-2]
\arrow[from=1-3,to=2-4]
\arrow[from=3-3,to=4-4] 
\arrow[from=5-3,to=6-4]
\arrow[from=1-1,to=3-1, equal]
\arrow[from=3-1,to=5-1, equal]
\arrow[from=1-3,to=3-3, equal]
\arrow[from=3-3,to=5-3, equal]
\arrow[from=4-2,to=2-2, crossing over]
\arrow[from=4-2,to=6-2, crossing over]
\arrow[from=4-4,to=2-4, crossing over]
\arrow[from=4-4,to=6-4, crossing over]
\arrow[from=2-2,to=2-4, crossing over, hookrightarrow]
\arrow[from=4-2,to=4-4, crossing over, hookrightarrow]
\arrow[from=6-2,to=6-4, crossing over, hookrightarrow]
\end{tikzcd}
\]
where the arrows from left to right are injective. Taking homotopy fibers of the action maps gives a commutative diagram
\[
\begin{tikzcd}
\Symp_h(\widetilde{M}_{\delta'},\rel \Sigma)\ar[r, hookrightarrow] & \Symp_h(\widetilde{M}_\delta,\rel \Sigma) \\
\hofib_{\mathcal{P}}(\delta', \rel \Sigma)\ar[r]\ar[u]\ar[d] & \hofib_{\mathcal{P}}(\delta, \rel \Sigma)\ar[d]\ar[u] \\
\hofib_{\cA}(\delta', \Sigma)\ar[r] & \hofib_{\cA}(\delta, \rel \Sigma)
\end{tikzcd}
\]
in which vertical arrows are homotopy equivalences and where the top horizontal arrow is an inclusion. This makes the following diagram of homotopy fibrations
\begin{equation}\label{homotopy commutative diagram}
\begin{tikzcd}
\Symp_h(\widetilde{M}_{\delta'},\rel \Sigma) \ar[r]\ar[d] & \Diff_h(\widetilde{M}_{\delta'}, \rel \Sigma)\ar[r]\ar[d,hookrightarrow,"\simeq"] &  \mathcal{A}(\tom_{\delta'},\rel \Sigma)\ar[d,hookrightarrow] \\
\Symp_h(\widetilde{M}_{\delta},\rel \Sigma) \ar[r] & \Diff_h(\widetilde{M}_{\delta}, \rel \Sigma)\ar[r] &  \mathcal{A}(\tom_\delta,\rel \Sigma)
\end{tikzcd}
\end{equation}
homotopy commutative. However, the two homotopy fibrations are defined on different symplectic manifolds. To fix this, we construct a diffeomorphism between $\widetilde{M}_{\delta'}$ and $\widetilde{M}_{\delta}$ whose homotopy class is canonical. Let
\[
\rho:\left[\sqrt{\delta'/\pi},\sqrt{r/\pi}\right]\to\left[\sqrt{\delta/\delta'},1\right]
\]
be a non-decreasing function which is equal to $\lambda:=\sqrt{\delta/\delta'}$ near $\sqrt{\delta'/\pi}$ and to $1$ near $\sqrt{r/\pi}$. The radial diffeomorphism
\begin{align*}
h:B^4(r)\setminus B^4(\delta') &\to B^4(r)\setminus B^4(\delta)\\
z&\mapsto \rho(|z|)z
\end{align*}
extends to a diffeomorphism $\Psi:\widetilde{M}_{\delta'}\to\widetilde{M}_\delta$ which identifies the two exceptional divisors, which is the identity outside $\bar{\phi}(B^4(r))$, and which induces a homeomorphism between $\Diff_h(\widetilde{M}_{\delta'}, \rel \Sigma)$ and $\Diff_h(\widetilde{M}_{\delta}, \rel \Sigma)$. Since $\Psi$ is a radial rescaling on $\bar{\phi}(B^4(r))$, it intertwines the standard complex structures near the exceptional divisors and it pulls back $\tom_\delta$ to a form on $\widetilde{M}_{\delta'}$ that still tames $J_0$. Moreover, since $\Psi$ is just multiplication by the constant $\lambda$ near $\phi(B^4(\delta'))$,  $\Psi^*\tom_\delta$ is equal to $\lambda^2\tom_{\delta'}$ near $\Sigma$. It follows that $\Psi$ takes tame pairs in $\mathcal{P}(\delta,\,\rel \Sigma)$ to the space $\mathcal{P}(\Psi^*\tom_\delta,\,\rel \Sigma)$ of tame pairs on $\widetilde{M}_{\delta'}$ in which the symplectic form is equal to $\lambda^2\tom_{\delta'}$ near $\Sigma$. 

\begin{lemma}
Given two admissible capacities $\delta<\delta'<r$, the tame-to-tame inflation along $\Sigma$ defines an inclusion
\begin{equation}
\mathcal{A}(\tom_{\delta'},\, \rel \Sigma)\into \mathcal{A}\left(\Psi^*\tom_\delta,\, \rel \Sigma\right).
\label{prop:inclusion of spaces of ACS}
\end{equation}
\end{lemma}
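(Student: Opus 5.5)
Given $J\in\mathcal{A}(\tom_{\delta'},\rel\Sigma)$, I will produce a symplectic form on $\widetilde{M}_{\delta'}$ that tames $J$, is cohomologous to $\Psi^*\tom_\delta$, and equals $\lambda^2\tom_{\delta'}$ near $\Sigma$. Since $J$ is unchanged throughout, the assignment $J\mapsto J$ is then the desired inclusion.

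\emph{Cohomological bookkeeping.} The diffeomorphism $\Psi$ is the identity outside $\bar\phi(B^4(r))$ and identifies the exceptional divisors. Hence, in the basis $\{L,E\}$, the class $[\Psi^*\tom_\delta]$ pairs to $1$ with $L$ and to $\delta$ with $E=[\Sigma]$, whereas $[\tom_{\delta'}]$ pairs to $1$ and $\delta'$. Passing from $[\tom_{\delta'}]$ to $[\Psi^*\tom_\delta]$ therefore decreases the area of $E$ by $\delta'-\delta$ and keeps $L$ fixed, so
\[
[\Psi^*\tom_\delta]=[\tom_{\delta'}]+(\delta'-\delta)\,{\rm PD}(E).
\]
This is exactly the class change produced by inflating along $\Sigma$, which has self-intersection $-1$. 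This is the direction covered by Lemma~\ref{lemma:decreasing capacities}, and it needs only the curve $\Sigma$.

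\emph{Inflation compatible with the standard model.} Let $\om\in\Omega(\tom_{\delta'},\rel\Sigma)$ tame $J$. Near $\Sigma$, both $\om$ and $J$ coincide with the standard model: a neighbourhood of $\Sigma$ is identified with a neighbourhood of the zero section of the tautological bundle $\mathcal{O}(-1)\to\cp^1$, which is the image of a shell $B^4(r)\setminus B^4(\delta')$. Because $J=\tJ_0$ there, I will not invoke the general tame inflation of McDuff/Bu\c{s}e. Instead I will write the inflation form explicitly as
\[
\om_t=\om+t\,\tau,\qquad t\in[0,\delta'-\delta],
\]
where $\tau$ is a closed $2$-form supported in the standard neighbourhood and Poincar\'e dual to $E$ (this is Bu\c{s}e's negative inflation). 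It remains to check three properties.
\begin{enumerate}
\item \emph{$\om_t$ tames $J$.} Away from the support of $\tau$ this is automatic. On the support, $J=\tJ_0$ is integrable and toric, so one can use a $U(2)$-invariant radial potential. The standard form $\tom_{\delta'}$ in radial coordinates is $\frac{i}{2}\partial\bar\partial f(|z|^2)$. Taking $\om_t=\frac{i}{2}\partial\bar\partial f_t$ with $f_t$ convex and radially modified, $\om_t$ remains K\"ahler for $\tJ_0$, hence tames $J$.
\item \emph{Correct local form.} The potentials $f_t$ can be chosen to agree with the original one near the outer boundary of the neighbourhood, so that $\om_t=\om$ there. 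Very close to $\Sigma$, $f_t$ is taken to be the potential of the blow-up of size $\delta'-t$; at $t=\delta'-\delta$ this is the potential of size $\delta$.
\item \emph{Matching the rescaled model.} I still need to identify the form obtained in (ii) with $\lambda^2\tom_{\delta'}=\Psi^*\tom_\delta$ near $\Sigma$. Since $\Psi$ acts as multiplication by the constant $\lambda$ near the divisor, pulling back the size-$\delta$ model gives exactly $\lambda^2\tom_{\delta'}$. A further radial reparametrisation, which is $\tJ_0$-holomorphic up to isotopy, matches the two.
\end{enumerate}

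\emph{Main obstacle.} I expect the hardest step to be making this local convexity argument precise. The model near $\Sigma$ must be exactly $\Psi^*\tom_\delta$, not merely isotopic to it, while remaining K\"ahler for the fixed $\tJ_0$ and gluing to the untouched outer region. I would first try to build everything from $U(2)$-invariant potentials on the shell $B^4(r)\setminus B^4(\delta)$ and interpolate them with a cutoff in $|z|$. Convexity of each potential, together with convexity of the set of K\"ahler potentials, gives tameness of the interpolation.

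For the remaining requirements, the resulting form $\om_{\delta'-\delta}$ lies in the class $[\Psi^*\tom_\delta]$ and is standard near $\Sigma$. It lies in the correct connected component of $\Omega(\Psi^*\tom_\delta,\rel\Sigma)$ because the path $t\mapsto\om_t$ can be concatenated with the corresponding path starting from the reference form $\tom_{\delta'}$ itself, which ends at $\Psi^*\tom_\delta$ up to a deformation rel $\Sigma$. It follows that $J\in\mathcal{A}(\Psi^*\tom_\delta,\rel\Sigma)$.
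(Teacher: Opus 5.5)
Your strategy is the paper's: inflate along $\Sigma$ by adding a multiple of a Thom form supported in a neighbourhood of $\Sigma$ on which both the taming form $\om$ and $J$ are standard. Then check that the new forms are $\tJ_0$-K\"ahler there, so they still tame $J$, and use $[\Psi^*\tom_\delta]=[\tom_{\delta'}]+(\delta'-\delta)\,{\rm PD}(E)$, which you verify correctly. The paper does the local check with the explicit form $\tau=f'(s)\,ds\wedge d\theta+f(s)\,\om_{FS}$, $f$ non-decreasing, and stops at ``the standard form $\om_\delta$ near $\Sigma$''. You are right that one more step is needed. Adding $(\delta'-\delta)\tau$ to $\pi^*\om_0+\delta'\om_{FS}$ gives $\pi^*\om_0+\delta\,\om_{FS}$ near $\Sigma$. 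Membership in $\cA(\Psi^*\tom_\delta,\rel\Sigma)$ instead requires $\lambda^2\tom_{\delta'}=\lambda^2\pi^*\om_0+\delta\,\om_{FS}$ there. These forms give $\Sigma$ the same area but are different forms.

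The gap is that neither of your ways of closing this works as written.

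In (iii), being ``$\tJ_0$-holomorphic up to isotopy'' is the wrong property. A radial map $w\mapsto\sigma(|w|)w$ with non-constant $\sigma$ is not $\tJ_0$-holomorphic. Moreover, if you transport $J$ by a diffeomorphism, you no longer show that the \emph{same} $J$ lies in the target, and that is the whole content of the inclusion.

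In your last paragraph, a cut-off $\chi F_1+(1-\chi)F_2$ of two K\"ahler potentials is not a convex combination in the relevant sense. Applying $\partial\bar\partial$ produces terms with derivatives of $\chi$ (e.g.\ $(F_1-F_2)\,\partial\bar\partial\chi$) that can destroy positivity. Also, ``$f_t$ convex'' is not the positivity condition.

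The missing ingredient is an elementary criterion. A $U(2)$-invariant form $\frac{i}{2}\partial\bar\partial F(u)$, $u=|w|^2$, on the punctured model is $\tJ_0$-K\"ahler iff its profile $\mu(u)=uF'(u)$ is positive and strictly increasing. For a fixed constant $c>0$, the model $\pi^*\om_0+\alpha\,\om_{FS}$ has profile $u+c\alpha$, and $\lambda^2\tom_{\delta'}$ has profile $\lambda^2u+c\delta$. Since $\lambda^2u+c\delta<u+c\delta'$, you can pick one increasing profile with these properties:
\begin{itemize}
\item it equals $\lambda^2u+c\delta$ for $u<a$;
\item it equals $u+c\delta'$ for $u>b$;
\item $\{u<b\}$ lies inside the region where $\om=\tom_{\delta'}$ and $J=\tJ_0$.
\end{itemize}
The resulting form equals $\om$ outside $\{u<b\}$ and is $\tJ_0$-K\"ahler on $\{u<b\}$, hence tames $J$. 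It equals $\lambda^2\tom_{\delta'}$ near $\Sigma$. It lies in $[\Psi^*\tom_\delta]$ because only the area of $\Sigma$ changed.

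Equivalently, your reparametrisation idea can be rescued. Pull back only the \emph{form}, never $J$, by a $U(2)$-equivariant radial diffeomorphism equal to $w\mapsto\lambda w$ near $\Sigma$ and to the identity near $u=b$. Such pullbacks keep the profile increasing, hence keep the form $\tJ_0$-K\"ahler. This is the same fact the paper uses to see that $\Psi^*\tom_\delta$ tames $J_0$.
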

\begin{proof}
The tame-to-tame inflation along $\Sigma$ consists in adding to a symplectic form $\om$ an appropriate Thom form $\tau$ representing the Poincaré dual of $\Sigma$, see~\cite[Section 3.2]{CPP-JTtameInflation}. Since $\Sigma$ is an exceptional sphere, and since the initial pair $(\om, J)$ is standard near $\Sigma$, we can work in a standard neighborhood $L(\delta)$ of the zero section in $\mathcal{O}(-1)$. Let $s$ be the radial distance, let $d\theta$ be the angular form, and let $f$ be a non-decreasing function which is equal to $-1$ near $0$ and which vanishes for $s>\delta-\epsilon$. Define
\[
\tau = f'(s) ds\wedge d\theta + f(s)\om_{FS}
\]
where $\om_{FS}$ is the Fubini-Study form on $\cp^1$ of total area one.
By construction, $\tau=-\om_{FS}$ near $\Sigma$, so that adding a multiple of $\tau$ to the model forms 
\[
\om_\alpha :=\pi^*\om_0 + \pr^*\alpha\om_{FS}
\]
amounts, locally near $\Sigma$, to decreasing $\alpha$. In particular, taking $\alpha=\delta'$, we can inflate to get the standard form $\om_\delta$ near $\Sigma$. A direct computation shows that the inflated forms all tame $J$, see~\cite[Section 3.2]{CPP-JTtameInflation}. 
\end{proof}

Identify $\widetilde{M}_{\delta'}$ with the complex blow-up of $M$ at the center $\phi(0)$, $\widetilde{M}$. Then, using the notation of Section~\ref{S:preliminaries}, we have identifications
\begin{gather}
\mathcal{A}(\delta', \rel \Sigma)=\mathcal{A}(\tom_{\delta'}, \rel \Sigma),\quad 
\mathcal{A}(\delta, \rel \Sigma)=\mathcal{A}\left(\Psi^*\tom_\delta,\, \rel \Sigma\right),\label{eq:identification A}\\
\Symp_h(\delta,\rel \Sigma)=\Symp_h(\Psi^*\tom_\delta,\rel \Sigma).\notag
\end{gather}
Combining the preceding observations, we obtain the following proposition.
\begin{proposition}\label{prop:induced map between Symp groups}
Given a symplectic embedding $\bar{\phi}: B^4(r)\into (M,\om)$ and two capacities $\delta<\delta'<r$, the symplectic blow-up construction together with the diffeomorphism $\Psi:\widetilde{M}_{\delta'}\to\widetilde{M}_\delta$ induces an inclusion
\[
\mathcal{A}(\delta', \rel \Sigma) \into \mathcal{A}(\delta, \rel \Sigma)
\]
and a map
\[
\Symp_h(\delta', \rel \Sigma) \into \Symp_h(\delta, \rel \Sigma)
\]
which makes the diagram \eqref{homotopy commutative diagram} homotopy commutative. Moreover, composing with the homotopy equivalence 
\[
\Symp_h(\delta,\rel \Sigma)\to \Stab_h(\phi(\delta)),
\]
and similarly for $\delta'$, we obtain a homotopy commutative diagram
\[
\begin{tikzcd}
\Symp_h(\delta',\rel \Sigma) \ar[r,hookrightarrow,"\simeq"]\ar[d] & \Stab_h(\phi(\delta'))\ar[r]\ar[d] &  \Symp_h(M,\om)\ar[d,equal]\ar[r] & \Emb(\delta',M)\ar[d]\\
\Symp_h(\delta,\rel \Sigma) \ar[r,"\simeq"] & \Stab_h(\phi(\delta))\ar[r] &  \Symp_h(M,\om)\ar[r] & \Emb(\delta,M).
\end{tikzcd}
\]
\qed
\end{proposition}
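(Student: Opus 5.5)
The statement is Proposition~\ref{prop:induced map between Symp groups}, and most of its content has already been assembled above. The plan is to package those pieces into the two diagrams and check that the constructions are compatible. I would work throughout on the fixed smooth manifold $\widetilde{M}=\widetilde{M}_{\delta'}$, transporting everything about $\widetilde{M}_\delta$ through the radial diffeomorphism $\Psi$.

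\textbf{Step 1: the inclusion of spaces of almost complex structures.} By the identifications~\eqref{eq:identification A}, the required inclusion $\mathcal{A}(\delta',\rel\Sigma)\into\mathcal{A}(\delta,\rel\Sigma)$ is exactly the inclusion~\eqref{prop:inclusion of spaces of ACS} supplied by the lemma on tame-to-tame inflation along $\Sigma$. I would check that this inclusion is $\Diff_h(\widetilde{M},\rel\Sigma)$-equivariant.
\begin{itemize}
\item $\Psi$ intertwines the standard structures near $\Sigma$.
\item It induces a homeomorphism $\Diff_h(\widetilde{M}_{\delta'},\rel\Sigma)\cong\Diff_h(\widetilde{M}_\delta,\rel\Sigma)$.
\item Hence the diffeomorphism groups acting on source and target coincide, which is why the middle vertical arrow of~\eqref{homotopy commutative diagram} is a homotopy equivalence (indeed a homeomorphism after conjugating by $\Psi$).
\end{itemize}

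\textbf{Step 2: the induced map on symplectomorphism groups.} Taking homotopy fibres of the orbit maps $\Diff_h\to\mathcal{A}$ at a base point $J_0$ in the smaller space, the equivariant inclusion induces a map of homotopy fibres. Through the zigzags $\Symp_h\simeq\hofib_{\mathcal P}\simeq\hofib_{\mathcal A}$ of diagram~\eqref{eq:basic diagram rel to divisors}, this gives the map $\Symp_h(\delta',\rel\Sigma)\to\Symp_h(\delta,\rel\Sigma)$. Homotopy commutativity of~\eqref{homotopy commutative diagram} is then formal: a map of $G$-spaces induces a map of homotopy orbit fibrations.

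The delicate point is to identify this map concretely. I want it to agree, up to homotopy, with conjugation by $\Psi$ composed with the inclusion of forms standard near a larger ball. The approach is to go through the pair spaces $\mathcal{P}$.
\begin{itemize}
\item Inflation along $\Sigma$ gives a path from $(\tom_{\delta'},J)$ to $(\Psi^*\tom_\delta,J)$ inside the $J$-tame pairs.
\item This path depends continuously on $J$ and is standard near $\Sigma$.
\item It therefore yields an equivariant homotopy between the two induced maps $\mathcal{P}(\delta',\rel\Sigma)\to\mathcal{P}(\delta,\rel\Sigma)$, and these project to the same inclusion on $\mathcal{A}$.
\end{itemize}

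\textbf{Step 3: the second diagram.} I would blow down.
\begin{itemize}
\item Using the canonical homeomorphisms between objects standard near $\phi(B^4(\delta))$ and objects standard near $\Sigma$, together with Proposition~\ref{prop:equivalence stab and rel B}, the horizontal equivalences $\Symp_h(\delta,\rel\Sigma)\simeq\Stab_h(\phi(\delta))$ are available.
\item Under blow-down, the inclusion of forms and diffeomorphisms standard near the larger ball corresponds to the natural inclusion $\Stab_h(\phi(\delta'))\subset\Stab_h(\phi(\delta))$: anything fixing the larger ball fixes the smaller one.
\item The remaining two squares commute strictly. $\Symp_h(M,\om)$ is common to both rows, and the restriction $\Emb(\delta',M)\to\Emb(\delta,M)$ commutes with the orbit maps.
\end{itemize}

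\textbf{Main obstacle.} I expect the main difficulty to be Step 2: verifying that the map defined abstractly via homotopy fibres agrees with the geometric inclusion of stabilizers. The issue is that $\Psi^*\tom_\delta$ differs from the blow-down picture of $\tom_\delta$ by the rescaling near $\Sigma$. I would handle it by showing that the family of inflated forms, together with conjugation by the isotopy of radial maps from $\rho$ to the identity, provides a canonical path. Moser's argument relative to $\Sigma$ then turns this path into a homotopy between the two maps of symplectomorphism groups.
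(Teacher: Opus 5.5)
Your outline is sound and uses the same ingredients as the paper: the canonical blow-up/blow-down identifications, the radial diffeomorphism $\Psi$, inflation along $\Sigma$, and Proposition~\ref{prop:equivalence stab and rel B}. However, it runs the logic in the opposite direction.

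\textbf{How the paper proceeds.} The map of symplectomorphism groups is geometric from the start. Objects standard near $\phi(B^4(\delta'))$ are also standard near $\phi(B^4(\delta))$. On the blow-ups this becomes ``extend forms and structures by the standard ones, and diffeomorphisms by the identity''. The resulting diagram of actions on $\Omega$, $\cP$, $\cA$ commutes strictly. Taking homotopy fibres then gives an honest inclusion $\Symp_h(\widetilde M_{\delta'},\rel\Sigma)\into\Symp_h(\widetilde M_\delta,\rel\Sigma)$, which blows down to $\Symp_h(M,\rel\phi(B^4(\delta')))\subset\Symp_h(M,\rel\phi(B^4(\delta)))$. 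Compatibility with $\Stab_h(\phi(\delta'))\subset\Stab_h(\phi(\delta))$ is therefore automatic. $\Psi$ and the inflation lemma are used only to realize the map of $\cA$-spaces as a subset inclusion on one manifold, which is what Lemma~\ref{L:eells} needs.

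\textbf{How you proceed.} You start from that subset inclusion, so homotopy commutativity of~\eqref{homotopy commutative diagram} is formal. But your map on $\Symp$ then exists only through homotopy fibres, and you must prove it agrees with the geometric inclusion of stabilizers. Both routes hinge on the same comparison: $J\mapsto\Psi^*\mathrm{ext}(J)$ versus the tautological inclusion, compatibly with the $\Diff_h$-actions. The paper leaves this implicit in ``combining the preceding observations'' and in the remark that the homotopy class of $\Psi$ is canonical. Your organisation forces it into the open.

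\textbf{Where Step~2 falls short.} The bullets of your Step~2 do not actually carry out this comparison.
\begin{itemize}
\item Inflation keeps $J$ fixed, whereas $\Psi^*\circ\mathrm{ext}$ pulls $J$ back by the radial map $h$, which is not the identity on $\bar\phi(B^4(r))$. So the two maps $\cP(\delta',\rel\Sigma)\to\cP(\delta,\rel\Sigma)$ do not cover the same map of $\cA$-spaces.
\item Inflating $\tom_{\delta'}$ does not end at $\Psi^*\tom_\delta$, and $\Psi^*\tom_\delta$ in general does not tame $J$ on the shell where $\rho'\neq 0$.
\end{itemize}

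\textbf{What does the work.} The ingredient you mention only under ``Main obstacle'' is the one that matters: the radial isotopy $\Psi_t\colon\widetilde M_{\delta'}\to\widetilde M_{\delta_t}$ from the identity to $\Psi$ (with $\rho_t\le 1$), combined with inflation by the complementary amount $\delta_t-\delta$. Take $\tau$ supported where $f$ is the identity. Then the cohomologous forms $\Psi_t^*\tom_{\delta_t}+(\delta_t-\delta)\tau$ are all preserved by $\Psi_t^{-1}\,\mathrm{ext}_t(f)\,\Psi_t$. This path runs from $f$, regarded as a symplectomorphism of the inflated form (which is what your homotopy-fibre map produces), to the geometric image $\Psi^{-1}\mathrm{ext}(f)\Psi$. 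A Moser isotopy along this path, kept relative to $\Sigma$, then gives the required homotopy.
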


\bibliography{Bibliography}
\bibliographystyle{plain}

\end{document}